\newtheorem{theorem}{Theorem}[section]
\newtheorem{corollary}[theorem]{Corollary}
\newtheorem{lemma}[theorem]{Lemma}
\newtheorem{remark}[theorem]{Remark}
\newtheorem{algorithm}{Algorithm}[section]
\newtheorem{assumption}{Assumption}[section]
\newtheorem{definition}{Deffinition}[section]
\newcommand{\red}[1]{{ \color{red} #1}}
\newcommand{\blue}[1]{{ \color{blue} #1}}
\newcommand{\ubar}[1]{\underaccent{\bar}{#1}}
\def\R{{\mathbb R}}
\def\bfv{{\bf v}}
\def\bfz{{\bf z}}
\def\bfu{{\bf u}}
\def\bff{{\bf f}}
\def\bfftil{\widetilde{\bf f}}
\def\Lambdat{\widetilde{\Lambda}}
\def\Lambdatt{{\Lambda}}
\def\bfF{{\bf F}}
\def\bPsi{\boldsymbol \Psi}
\def\calA{\mathcal A}
\def\calAt{{\mathbb A}}
\def\calIt{{\mathbb I}}
\def\calL{\mathcal L}
\def\calI{\mathcal I}
\def\calD{\mathcal D}
\def\calR{\mathcal R}
\def\sgn{\mathop{\rm sgn}}
\def\argmax{\mathop{\rm argmax}}
\def\sup{\mathop{\rm sup}}
\def\inf{\mathop{\rm inf}}
\def\kkk{{\mathrm k}}
\def\dH{\dot{H}}
\def\calT{{\mathcal T}}
\def\sumj{\sum_{j=1}^\infty}
\title[Solution of Systems with Fractional Power of SPD matrices]
{Optimal Solvers for Linear Systems with Fractional Powers of Sparse SPD Matrices}
\author[S. Harizanov, R. Lazarov, P. Marinov, S. Margenov, Y. Vutov]
{Stanislav Harizanov \and Raytcho Lazarov \and Pencho Marinov \and Svetozar Margenov \and Yavor Vutov}
\address{Institute of Information and Communication Technologies, Bulgarian Academy of 
Sciences, Acad. G. Bonchev, bl. 25A, 1113 Sofia, Bulgaria
(sharizanov@parallel.bas.bg)}
\address{Deptartment of Mathematics, Texas A\&M University, 
College Station,
TX 77843-3368, USA (lazarov@math.tamu.edu) and Institute of Mathematics and Informatics,
Bulgarian Academy of Sciences, Acad. G. Bonchev, bl. 8, 1113 Sofia, Bulgaria}
\address{Institute of Information and Communication Technologies, Bulgarian Academy of 
Sciences, Acad. G. Bonchev, bl. 25A, 1113 Sofia, Bulgaria (pencho@parallel.bas.bg)}
\address{Institute of Information and Communication Technologies, Bulgarian Academy of 
Sciences, Acad. G. Bonchev, bl. 25A, 1113 Sofia, Bulgaria (margenov@parallel.bas.bg)}
\address{Institute of Information and Communication Technologies, Bulgarian Academy of 
Sciences, Acad. G. Bonchev, bl. 25A, 1113 Sofia, Bulgaria (yavor@parallel.bas.bg)}
\begin{document}
\date{\today}

\begin{abstract}
In this paper we consider efficient algorithms  for solving  the algebraic equation
$\calA^\alpha \bfu=\bff$, $0< \alpha <1$, where $\calA$ is a properly scaled  
symmetric and positive definite matrix
obtained from  finite difference or finite element approximations of second order 
elliptic problems in $\R^d$, $d=1,2,3$. This solution is then written as
 $\bfu =\calA^{\beta-\alpha} \bfF$ with $\bfF=\calA^{-\beta} \bff$ with $\beta$ 
positive integer. The approximate solution method we propose and study 
is based on the best uniform rational approximation of the function  $t^{\beta-\alpha}$
for $0 < t \le 1$, and the assumption that
one has at hand an efficient method (e.g. multigrid, multilevel, or other fast algorithm) 
for solving equations like  $(\calA +c \calI)\bfu= \bfF$, $c \ge 0$.
The provided numerical experiments confirm the efficiency of the proposed algorithms.
\\[1ex]
AMS classification: 65F50, 65F10, 65D15, 65N22
\\[1ex]
Key words: symmetric positive definite matrices, fractional powers of matrices, best rational approximation, fast solvers of fractional power matrix 
equations
\end{abstract}

\maketitle

\section{Introduction}

\subsection{Motivation for our study}
 
Let $\Omega$ be a bounded domain in $\R^d$, $d=1,2,3$,
with polygonal boundary $\Gamma=\partial \Omega=\bar \Gamma_D \cup \bar \Gamma_N$, where 
$\Gamma_D$ has positive measure. Let $q({x}) \ge 0$ in $\Omega$, and ${\bf a}({x}) \in \R^{d \times d}$ 
be a symmetric and positive definite  (SPD) matrix uniformly bounded in $\Omega$, i.e., 
\begin{equation}\label{ax}
c \xi^T \xi \leq
{\xi}^T {\bf a}({x}) \, {\xi} \leq
C \xi^T \xi \quad \forall {\xi} \in\R^{d}, \forall {x} \in \Omega,
\end{equation}
for some positive constants $c$ and $C$. 
Next, on $V \times V$,  $V:= \{ v \in H^1(\Omega): ~v({x})=0 ~\mbox{on} ~\Gamma_D \}$
define the  bilinear form
\begin{equation}\label{eqn:weak}
A(u,v) := \int_\Omega \big ({\bf a}({x}) \nabla u({x}) \cdot  \nabla v({x}) + q(x) u({x}) v({x}) \big )d{x}.
\end{equation}
Under the assumptions on $ {\bf a}({x})$, $q$, and $\Gamma$, the bilinear form is symmetric and coercive on $V$.
Further,  introduce  $\calT:L^2:=L^2(\Omega)\rightarrow V$, where for $f \in L^2(\Omega)$ 
the function $u=\calT f  \in V$ is the unique solution to  $A(u,\phi)=(f,\phi), \ \forall \phi\in V,$
   and $ {(v,u)}$, for $u,v \in L^2(\Omega)$ is the inner product in $L^2(\Omega)$.

The goal of this paper is to study methods and algorithms for solving
the finite element approximation of the operator equation
\begin{equation}\label{eq:fracPDE}
\calL^\alpha u = f, \ \  \calL = \calT^{-1}, \ \ 
\calL^{\alpha} u(x) = \sum_{i=1}^\infty \lambda_i^{\alpha} c_i \psi_i(x), \quad
\mbox{where} \quad
u(x) = \sum_{i=1}^\infty  c_i \psi_i(x),
\end{equation}
$\{ \psi_i(x) \}_{i=1}^\infty$ are the eigenfunctions of $\calL$,
orthonormal in $L_2$-inner product and  $\{ \lambda_i \}_{i=1}^\infty$
are the corresponding eigenvalues that are real and positive.

This definition  is general, but different from the definition of the fractional powers 
of elliptic operators with homogeneous Dirichlet data defined through Riesz potentials, 
which generalizes the concept of equally weighted left and right Riemann-Liouville fractional derivative
defined in one space dimension to the multidimensional case, see, e.g. \cite{BP15}. There is ongoing research 
about the relations of these two different definitions and their possible applications 
to problems in science and engineering, see, e.g. \cite{bates2006nonlocal}. 
However, we shall focus on the current definition and withhold comments and references
on such works.

Studying and numerically solving such problems 
is  motivated by the recent development in the fractional calculus and its numerous applications
 to  Hamiltonian chaos,  anomalous transport, and super-diffusion,  \cite{zaslavsky2002chaos}, 
 anomalous diffusion in complex systems such as turbulent plasma, convective rolls,
and zonal flow system, \cite{bakunin2008turbulence}, long-range interaction in elastic deformations,
\cite{silling2000reformulation}, nonlocal electromagnetic fluid flows, \cite{mccay1981theory}, 
image processing, \cite{gilboa2008nonlocal},  nonlocal evolution equations arising in materials science, 
\cite{bates2006nonlocal}.
A recent discussion about various anomalous diffusion models,
their properties and applicability to chemistry and engineering one can find in \cite{metzler2014anomalous}.
These applications lead to various types of fractional order partial differential equations that 
involve in general non-symmetric elliptic operators see, e.g. 
\cite{KilbasSrivastavaTrujillo:2006}.

An important subclass of such problems are the fractional powers of self-adjoint elliptic 
operators described below, which are nonlocal but self adjoint. 
Assume that a finite element method has been applied to approximate
the problem \eqref{eq:fracPDE} and this resulted in a certain algebraic problem.
The aim of this paper is to address the issue of solving such systems.
The rigorous error analysis of such approximation is a difficult task that  
is outside the scope of this paper. Such error bounds are derived under 
certain assumptions that are interplay between the data regularity, 
regularity pick-up of $\calL$ and the fractional power $\alpha$.
The needed justification is provided by the work of  Bonito and Pasciak, 
\cite{BP15}, for the problem \eqref{eqn:weak} in the case when $q=0$ 
and $\Gamma_D=\partial \Omega$, see also earlier work \cite{MU93}.
Following \cite{BP15}, one introduces 
$
\dH^\alpha:= \{ v \in L^2\  : \ \sumj  \lambda_j^{2\alpha}
    |(v,\psi_j)|^2<\infty\}
      $
and shows that $\dH^\alpha$ is a Hilbert space under the inner product
  $A_\alpha(v,w):=(\calL^{\alpha/2}v,\calL^{\alpha/2}w)$, 
{for all}  $v,w\in \dH^\alpha.$
To set up a finite element approximation of $\calL^\alpha u=f$ 
we first introduce its weak form: find $u \in \dH^\alpha$ such that
\begin{equation}\label{al-weak}
A_\alpha(u,v)=(f,v), \  \forall v \in \dH^\alpha.
\end{equation}
This problem has a unique solution 
\begin{equation}\label{frac-sol}
u=\calT^\alpha  f:=\sumj \lambda_j^{-\alpha} (f,\psi_j)\psi_j.
\end{equation}

Then for a finite elements space $V_h \subset \dH^1$ of continuous piece-wise 
polynomial functions defined on a quasi-uniform mesh with mesh size $h$ one
gets an approximate solution $u_h$ to \eqref{eq:fracPDE} by setting 
\begin{equation}\label{fracFEM}
\calA_h^\alpha u_h = \pi_h f, \quad \calA_h^{-1} = \calT_h, 
\end{equation}
where $\calT_h: V_h \to V_h$ is the solution operator to the FEM of finding $u_h \in V_h$ 
s. t. 
$$
A(u_h,v)=(f,v) \equiv (\pi_h f,v), \  \forall v \in V_h
$$ 
and $\pi_h: L^2(\Omega) \to V_h$ is the orthogonal projection on $V_h$.
Here for $\calT_h^\alpha$ we use an expression similar to \eqref{frac-sol}
but involving the eigenfunctions and eigenvalues of $\calT_h$.
As shown in \cite{BP15}, if the operator $\calT$ satisfies the regularity pick up, i.e., there is $s \in (0,1]$ s.t.
$ \| u \|_{\dH^{1+s}(\Omega)} \equiv \|\calT f \|_{\dH^{1+s}(\Omega)} \le c
    \|f\|_{\dH^{-1+s}(\Omega)}$
    and $\calL$ is a bounded map of $\dH^{1+s}(\Omega) $ into
    $\dH^{-1+s}(\Omega)$, then for $\alpha >s$ one has
\begin{equation}\label{FEMerror}
\| u - u_h\|_{L^2} =
\|\calT^{\alpha}f- \calT_h^{\alpha} \pi_h f\|_{L^2}  \le 
C  h^{2s}    \|f\|_{\dH^{2\delta}} , \ \ \delta \ge 0.
\end{equation}
The paper \cite[see, Theorem 4.3]{BP15}  contains more refined results depending on 
the relationship between smoothness of the data $\delta$, 
the regularity pick up $s$ and the fractional order $\alpha$. 
In the case of full regularity, $s=1$, the
best possible rate for $f \in L^2(\Omega)$ is,  
cf. \cite[Remark  4.1]{BP15},
$$
\| u - u_h\|_{L^2} \le C h^{2\alpha} |\ln h| \|f \|_{L^2}.
$$
The bottomline of the error estimates from \cite{BP15} is that, if  $f \in L^2(\Omega)$ and $s>\alpha$, then
$  \|u - u_h\|_{L^2}$ is  essentially $O(h^{2 \alpha})$.  Therefore,   the solution $u_h \in V_h$  of 
\eqref{fracFEM} is an approximation to the solution $u$ of \eqref{eq:fracPDE}.  
This fact makes our aim of solving the algebraic problem \eqref{fracFEM} justifiable. 
Finite element approximations of the elliptic problem \eqref{eq:fracPDE} 
and also more general non-symmetric problems were recently 
considered and studied by Bonito and Pasciak in \cite{BP17}.

\subsection{Algebraic problem under consideration}\label{algebraic}

  
  Now let $N$ be the dimension of $V_h$ and consider that a standard nodal basis is used. Let  
 $\calAt  \in \R^{N \times N}$ be a matrix representation of $\calA_h =\calT_h^{-1}$ 
 (defined in \eqref{eq:fracPDE}) and $\widetilde \bfu \in \R^N$ and $\bfftil \in \R^N$ be vector
representations through the nodal values of $u_h \in V_h$, and $\pi_h f \in V_h$, respectively. 
Then   we can recast the problem \eqref{fracFEM} 
in the following algebraic form: 
\begin{equation}\label{eq:fal}
\mbox{find } ~~ \bfu \in \R^N ~~ \mbox{such that } \quad
{\calAt}^\alpha \bfu =   \bfftil.
\end{equation}
The matrix $\calAt$ is symmetric and positive definite.
The fractional power $\calAt^\alpha$, $0 < \alpha <1$, of a symmetric positive definite matrix 
$\calAt$ 
is expressed through the eigenvalues and eigenvectors $\{ (\Lambdat_i, \bPsi_i) \}_{i=1}^N $
of $\calAt$. We assume that the eigenvectors are $l_2$-orthonormal, i.e.
$\bPsi_i^T \bPsi_j = \delta_{ij}$  and $\Lambdat_1 \le \Lambdat_2 \le \dots \Lambdat_N$.
The spectral condition number $\kkk(\calAt)=\Lambdat_N/\Lambdat_1 =O(h^{-2})$ 
for quasi-uniform meshes with mesh-size $h$.
Then $\calAt=WDW^T$ and $\calAt^\alpha = WD^\alpha W^T$,
where  
$W, D \in \R^{N \times N}$ are
defined as $W=[\bPsi_1^T, \bPsi_2^T, ..., \bPsi_N^T]$ and
$D=diag(\Lambdat_1, \dots, \Lambdat_N)$.
Then $ \calAt^{-\alpha}= W D^{-\alpha} W^T$ and the solution of 
$\calAt^\alpha \bfu =\bfftil$ can be expressed as
\begin{equation}\label{eqn:exacalg}
{\bfu} = \calAt^{-\alpha} \bfftil =W D^{-\alpha} W^T \bfftil.
\end{equation}
Obviously, we have the following standard 
{equality} for any $\beta \in \R$:
$$
\| \bfu \|_{\calAt^{\beta+\alpha}}= \| \bfftil \|_{\calAt^{\beta-\alpha}} \quad \mbox{with} 
\quad \| \bfu\|^2_{\calAt^\gamma} = \bfu^T \calAt^{\gamma} \bfu, \quad \gamma \in \R .
$$
The formula \eqref{eqn:exacalg}  could be used in practical computations if the eigenvectors and eigenvalues are
explicitly known and  the matrix vector multiplication with $W$ is
equivalent to a Fast Fourier Transform when $\calAt$ is a circulant matrix.
In such cases the computational complexity is almost linear,  $O(N \log N)$.
However, this limits  the applications to problems with constant coefficients in simple domains and
to the lowest order finite element approximations.
More general is the approach using an approximation of $\calAt$ with $H$-matrices combined with
Kronecker tensor-product 
{approximation.} 
This allows computations 
with almost linear complexity of the inverse of fractional power of 
a discrete elliptic operator in a hypercube $(0,1)^d\in\R^d$,
for more details, see  \cite{GHK05}. First attempt to apply
$H$-matrices to solving fractional differential equations in
one space variable is done in \cite{ZHAO2017}.
 
This work is related also to the more difficult problem of
stable computations of the matrix square root and other functions of matrices, see, 
e.g. \cite{druskin1998extended,Higham1997,Kenney1991}, 
where the stabilization of Newton method is achieved by using suitable
Pad\'e iteration.  However, in this paper we do not deal with evaluation of $\calAt^{\alpha}$,
instead we propose an efficient method
for solving the algebraic system $\calAt^{\alpha} \bfu =\widetilde \bff$, 
where $\calAt$ is an SPD matrix generated by 
approximation of second order elliptic operators.
Our research is also connected with the work done in \cite{
ilic2009numerical},
where numerical approximation of a fractional-in-space diffusion equation with 
non-homogeneous boundary conditions is considered.
In \cite{ilic2009numerical}, the proposed solver of the arising algebraic system 
relies on Lanczos method.   First, the adaptively preconditioned thick restart 
Lanczos procedure is applied to a system with $\calAt$.  The gathered spectral 
information is then used to solve the system with $\calAt^\alpha$. 
In \cite{druskin1998extended} an extended Krylov subspace method is proposed, originating by actions of the 
SPD matrix and its inverse. It is shown that for the same approximation quality, the variant of the extended subspaces 
requires about the square root of the dimension of the standard Krylov subspaces using only positive or negative matrix 
powers. A drawback of this method is the memory required to store the full dense matrix $W$ needed to perform the 
reorthogonalization.
Essentially, this approach and the method proposed and used in \cite{HMMV2016} rely 
on polynomial approximation of $t^{-\alpha}$, and the efficiency of the methods 
depends on the condition number of $\calAt$ and deteriorates  substantially for ill-conditioned
matrices.
  
\subsection{Overview of existing methods}

The numerical solution of nonlocal problems is rather expensive. 
The following three approaches (A1 - A3) are based on transformation 
of the original problem to a local elliptic or pseudo-parabolic problem, or on integral 
representation of the solution, thus increasing the dimension of  the original computational domain.  The Poisson 
problem is considered in the related papers refereed bellow.

\begin{itemize}
\item[{\bf A1}] 

A “Neumann to Dirichlet” map is used in \cite{CNOSaldago_2016}. 
Then, the solution of fractional Laplacian problem is obtained by 
$u(x) = v(x, 0)$ 
where $v : \Omega\times\mathbb R_+ \rightarrow \mathbb R$ is a solution of the equation
$$
-div\left ( y^{1-2\alpha}\nabla v(x,y)\right ) = 0, ~~~ 
(x,y)\in \Omega\times \mathbb R_+ ,
$$
where $v(\cdot,y)$ satisfies the boundary conditions 
of (\ref {eqn:weak}) $\forall y\in \mathbb R_+$,
$
\lim_{y\rightarrow\infty} v(x,y) = 0, ~~~ x\in\Omega ,
$
as well as
$
\lim_{y\rightarrow 0^+} \left (- y^{1-2\alpha} v_y(x,y)\right ) = f(x), 
~~~ x\in\Omega.
$
The variational formulation of this equation is  well posed in the related weighted 
Sobolev space. The finite element 
approximation uses the rapid decay of the solution $v(x,y)$ in the 
$y$ direction, thus enabling truncation of the semi-infinite
cylinder to a bounded domain of modest size. The proposed multilevel 
PCG method is based on the Xu-Zikatanov identity \cite{XuZ2002}.

\item[\bf A2]

A fractional Laplacian is considered in \cite{Vabishchevich14,Vabishchevich15} assuming the boundary condition 
$$
a(x){\frac{\partial u}{\partial n}} + \mu (x) u = 0, ~~~ x\in \partial\Omega,
$$
which ensures ${\calL} = {\calL}^* \ge \delta {\calI}$, $\delta >0$. Then the solution of 
the nonlocal problem $u$ can be 
found as
$
u(x)=w(x,1), ~ w(x,0)=\delta^{-\alpha}f,
$
where $w(x,t), 0<t<1$, is the solution of pseudo-parabolic equation 
$$
(t{\calD} + \delta {\calI} ) {\frac{dw}{dt}} + \alpha {\calD}w = 0,
$$
and ${\calD} = {\calL} - \delta {\calI} \ge 0$. Stability conditions 
are obtained for the fully discrete schemes under consideration.
A further development of this approach is presented in \cite{LV17} where
the case of fractional order boundary conditions is studied.

\item[\bf A3]
The following representation of the solution operator of  (\ref {eqn:weak})  is used in \cite{BP15}:
$$
{\calL}^{-\alpha} = \frac{2\sin (\pi\alpha)}{\pi} \int_0^\infty  t^{2\alpha - 1} 
\left ({\calI} +t^2 {\calL}\right )^{-1} dt 
$$
The authors introduce an exponentially convergent  
quadrature scheme. Then, 
{\blue (the approximation of $u$}
only involves evaluations 
of $({\calI}+t_i {\calA})^{-1} f$, where $t_i\in (0,\infty)$ is 
related to the current quadrature node, and where $\calI$ and ${\calA}$ stand 
for the identity and the finite element stiffness matrix corresponding
to the Laplacian.
A further development of this approach is available in \cite{BP17}, where the theoretical analysis is
extended to  the class of regularly accretive operators.
\end{itemize}

There are various other problems leading to systems with fractional 
power of sparse symmetric and positive definite matrices. For an illustration we give 
the following examples.  Consider a non-overlapping 
domain decomposition (DD) for the 2D model Laplacian problem  on a regular 
mesh with an interface along a  single mesh line.   The elimination of all degrees of freedom 
from the interior of the two subdomains reduces the solution to a system of equations on the
interface, or the Schur complement system.  
The matrix of this system is 
spectrally equivalent  to $\calAt^{1/2}$, where $\calAt=  h^{-1} tridiag (-1,2,-1)$, 
see, e.g. \cite{Nepomn_1991}. When in the preconditioned conjugate gradient (PCG) method 
FFT is used to solve the system with $\calAt^{1/2}$, then the DD preconditioner
has almost optimal complexity. 

In \cite{HMMV2016} 
the approach discussed in this paper
was used to optimally solve 
the equation \eqref{eq:fal} with $\alpha=0.5$, when $\calAt$ 
is an SPD matrix and belongs to a class of particular weighted graph Laplacian
models used in the volume constrained 2-phase segmentation of images.
Then, after rescaling, so that the spectrum of the  rescaled matrix is in $(\Lambda_1,1]$,
the best uniform polynomial approximation of $t^{-1/2}$, $t\in [\Lambda_1,1]$,
$\Lambda_1$ well separated from 0, was used to construct an optimal  solver.

\subsection{Our approach and contributions} 
\label{contibutions}
In Section \ref{sec:problem} we introduce the mathematical 
problem and present the idea of the proposed algorithm.  Let $\Lambdatt$ be an upper bound for the 
spectrum of $\calAt$, namely, $ \Lambdat_j \le \Lambdatt$, $j=1, \dots, N$. We 
rescale the system to the form
\begin{equation}\label{algebraic}
\calA^\alpha \bfu = \bff, \quad \mbox{where} \quad \calA = \calAt/\Lambdatt
\quad \mbox{ and } \quad \bff =\bfftil/\Lambdatt^\alpha, 
\end{equation} 
so that the spectrum of $\calA$, $\Lambda_j = \Lambdat_j/\Lambdatt$, 
$j=1,\cdots, N$ is in $(0,1]$. 
We summarize the properties of the rescaled matrix $\calA$ in the following  assumption.
\begin{assumption}\label{aa1}
 ~~$\calA$ is a symmetric, positive definite matrix and 
its spectrum is in the interval $(0,1]$.
\end{assumption}
Next, we argue that instead of the system $\calA^{\alpha} \bfu= \bff $ one can solve the equivalent system 
$\calA^{\alpha  - \beta} \bfu= \calA^{-\beta} \bff $ with $ \beta \ge 1$ an integer. Then  the
 idea is to approximately evaluate $\calA^{\beta -\alpha} \bff $ by
$P_k(\calA)(Q_k(\calA))^{-1} \bff$, for $k $ integer, where $P_k(t)Q_k^{-1}(t):=r^\beta_\alpha(t) $ is the best
uniform rational approximation 
(BURA) of  $t^{\beta -\alpha}$ on the interval $(0,1]$, see for 
more details Subsection \ref{sec:rational}.  In Section 2 we discuss the methods for computing $r^\beta_\alpha(t)$,
its approximation properties and questions regarding the implementation of $P_k(\calA)(Q_k(\calA))^{-1} \bff$.

The properties of the best rational approximation have been 
an object of numerous studies. In particular, the distribution of the poles, zeros, and extreme points, 
and the asymptotic behavior of the error 
$E_\alpha(k,k;\beta)=\max_{t \in [0,1]} \left |t^{\beta-\alpha} - r^\beta_\alpha(t) \right |$
when $k \to \infty$ are given in \cite{stahl2003}.
For example, it is known that all poles lie on the negative real line and the error decays 
exponentially in $k$, namely, is 
$O(e^{- c\sqrt{k}})$, $c>0$, 
see, relation \eqref{eq:asymp-approx}.
%

In Theorem \ref{th:error} and Remark \ref{k-bound} we show that 
we can balance the 
finite element error \eqref{FEMerror} with the error of the BURA \eqref{errorU} so that 
the total error is $O(h^{2\alpha})$ when $k\approx \frac{\beta^2}{\pi^2(\beta-\alpha)} |\ln h|^2$.
Thus, the feasibility of the method  {will depend} 
on the possibility to 
address two key issues: 
(1) for a given $0<\alpha<1$  and chosen $k$ integer, 
compute the BURA $P_k(t)/Q_k(t)$ and 
(2)  implement $P_k(\calA)(Q_k(\calA))^{-1}\bff$ efficiently.

To find the BURA for $t^{\beta - \alpha}$ we apply the modified Remez algorithm, 
see, \cite{PGMASA1987,CheneyPowell1987}. The main difficulty in implementing the algorithm is its instability for large $k$,
outlined for example in \cite{Dunham1984}. 
Our experience shows that for moderate $k=5,6,7$  we can compute the BURA
using double precision and equivalent representation by Chebyshev polynomials \eqref{eq1bua}. We note that 
for $\alpha < 0.5$ we have better approximation and the algorithms for finding BURA have better  stability, but still an 
outstanding issue is the stability of the computations for $k>9$.

Due to Lemma \ref{BURA_caracter} (see also \cite[Lemma 2.1]{saff1992asymptotic})
one can represent the rational function as a sum of partial fractions, so that the
implementation of $P_k(\calA)(Q_k(\calA))^{-1}$ will involve inversion of 
$\calA - d_j I $, $d_j \le 0$ for $j=0,1, \dots, k$, see representation \eqref{eq:simplified}.
The integer parameter $k \ge 1$ is the number of partial fractions of the best uniform 
rational approximation 
 {$r^1_\alpha(t)$  of $t^{1 -\alpha}$ on the interval $(0,1]$.  The 
most general form of this method for $\beta=1$
leads to \eqref{eq:Rgeneral}.
The nonpositivity of $d_j$ ensures that the systems with 
$ \calA - d_j I $, $d_j \le 0$ can be solved efficiently having at hand 
some efficient solver for systems with $\calA$. The positivity of $c_j$ 
means that the BURA approximation $r^1_\alpha(\calA)$ is positive. 
The behavior of $c_j>0$ and the related numerical round-off stability is 
further discussed in Remark \ref{rm:aaa}.
}
%
Since we can compute BURA efficiently for $k \le 10$ and small $\alpha$ 
we have developed, studied and experimented with a new concept of multi-step 
BURA algorithm, outlined in Section \ref{sec:multi-step BURA}.

Finally, in Section \ref{sec:NumTest} we present numerical experiments
 that illustrate the efficiency of the proposed algorithms. The first 
group of tests concerns scaled (normalized) matrices corresponding to 
1D Poisson equation where the exact solution is known and the BURA 
approximations are exactly computed. This setting allows numerically 
confirming the sharpness of theoretical estimates. In particular, some 
promising approximation properties are observed when different powers of 
$\calA$ are involved in the multi-step BURA.  The experiments with 2D 
fractional Laplacian illustrate the theoretical results concerning balancing 
the rescaling effect in \eqref{errorU}. Finally, we present 3D numerical 
experiments involving jumping coefficients. The solution $\bfu$ is unknown, 
while $\bfu_r$ is computed by a preconditioned conjugate gradient (PCG) 
solver that uses algebraic multigrid as a preconditioner. A multi-step 
setting of BURA is used to confirm the robustness with respect to the PCG 
accuracy.

\section{Solution strategy}
\label{sec:problem}

\subsection{The idea and theoretical justification of the method}
\label{sec:rational}


The goal of this study is to present a new robust solver of optimal 
complexity for solving the system \eqref{eq:fal} 
for a large class of sparse SPD matrices assuming that such a solver
is available for $\alpha=1$. This assumption holds in a very general 
setting, when $\calA$  is generated by finite element or finite difference approximation of second 
order elliptic operators. Such matrices are used in the numerical
tests presented in Section \ref{sec:NumTest}.
Note that $\calA^\alpha$ is dense and  in general not known.
This means in particular that the standard iterative solution methods are not
applicable since even in the case when 
$\calA^\alpha \bfv$
is computable just
one such computation requires $O(N^2)$ arithmetic operations.

We consider the class of rational functions 
$$
\mathcal R(m,k):=\{r(t)=P_m(t)/Q_k(t)\,:\,P_m\in{\mathcal P}_m,Q_k \in {\mathcal P}_k\},
$$ 
where ${\mathcal P}_k$ is the set of all polynomials of degree $k$.  
For a given univariate function $g(t)$, $ 0 \le t \le 1$, 
the minimizer $r^\ast(t) = \frac{P^{\ast}_m(t)}{Q^{\ast}_k(t)} \in \mathcal R(m,k)$  of the problem
\begin{equation}\label{eq2_bua}
\min_{ r \in \mathcal R(m,k)}  \max_{t \in [0,1]} \left | g(t) - r(t)  \right | = \max_{t \in [0,1]} \left | g(t) - 
r^\ast(t)  \right | 
\end{equation}
is called {\it Best Uniform Rational Approximation}  (BURA) of $g(t)$.
\begin{definition}\label{def-BURA}
~~The minimizer 
$r^\beta_\alpha(t)=\frac{P^{\ast}_m(t)}{Q^{\ast}_k(t)}$  for $g(t) = t^{\beta -\alpha}$ 
is called {\it  $\beta$-Best Uniform Rational Approximation}
($\beta$-BURA) and its  error is denoted by 
$$
E_\alpha(m,k;\beta):=\max_{t \in [0,1]} \left |t^{\beta-\alpha} - r^\beta_\alpha(t) \right |.
$$
\end{definition}

Our algorithm is based on  the following Lemma:

\begin{lemma}\label{l:BURAerror} 
~~Let $\calA$ satisfy the assumption \ref{aa1} and  $ \bfF=\calA^{-\beta}\bff$ so that
$\bfu =  \calA^{\beta-\alpha} \bfF$. Let 
$r_\alpha^\beta(t)$ be the  best uniform rational approximation of $t^{\beta - \alpha}$ on $[0,1]$
and consider  $ \bfu_r=r^\beta_\alpha(\calA)\bfF$ to be an approximation to $\bfu$. Then the following 
bound for the error holds true
\begin{align}\label{bound}
\|\bfu_r-\bfu\|_{\calA^{\gamma}}
 \le E_\alpha(m,k;\beta)\|\bff\|_{\calA^{\gamma -2\beta}} \ \ \forall \gamma \in \R.
\end{align}
\end{lemma}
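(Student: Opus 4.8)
The plan is to pass to the common eigenbasis of $\calA$ and reduce the matrix inequality to a pointwise bound on the scalar approximation error of $t^{\beta-\alpha}$. Under Assumption \ref{aa1} we may write $\calA = WDW^T$ with $W$ orthogonal and $D=\operatorname{diag}(\Lambda_1,\dots,\Lambda_N)$, $\Lambda_j\in(0,1]$; then for any function $g$ one has $g(\calA)=Wg(D)W^T$, so that $\calA^\gamma$ (well defined for every real $\gamma$ since each $\Lambda_j>0$), $\calA^{\beta-\alpha}$, $\calA^{-\beta}$, and $r^\beta_\alpha(\calA)$ all share the eigenvectors $W$ and therefore commute. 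This commutativity is what will make the cross terms collapse.

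First I would introduce the scalar error function $e(t):=r^\beta_\alpha(t)-t^{\beta-\alpha}$, for which Definition \ref{def-BURA} gives $\max_{t\in(0,1]}|e(t)|\le E_\alpha(m,k;\beta)$. Writing $\bfu_r-\bfu=\bigl(r^\beta_\alpha(\calA)-\calA^{\beta-\alpha}\bigr)\bfF=e(\calA)\bfF$ and substituting $\bfF=\calA^{-\beta}\bff$ yields $\bfu_r-\bfu=e(\calA)\calA^{-\beta}\bff$. Expanding the weighted norm via $\|\bfv\|^2_{\calA^\gamma}=\bfv^T\calA^\gamma\bfv$ and using that all factors are functions of $\calA$,
\[
\|\bfu_r-\bfu\|^2_{\calA^\gamma}=\bff^T\calA^{-\beta}e(\calA)\,\calA^\gamma\,e(\calA)\calA^{-\beta}\bff=\bff^T e(\calA)^2\calA^{\gamma-2\beta}\bff.
\]
Passing to the eigenbasis with $\hat{\bff}=W^T\bff$, the right-hand side becomes $\sum_{j=1}^N e(\Lambda_j)^2\,\Lambda_j^{\gamma-2\beta}\,\hat{f}_j^2$.

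The final step is to bound each factor $e(\Lambda_j)^2$ by $E_\alpha(m,k;\beta)^2$; this is precisely where Assumption \ref{aa1} enters, since $\Lambda_j\in(0,1]$ lets the uniform estimate over $[0,1]$ be applied eigenvalue by eigenvalue. Pulling the constant out of the sum leaves $E_\alpha(m,k;\beta)^2\sum_{j=1}^N\Lambda_j^{\gamma-2\beta}\hat{f}_j^2=E_\alpha(m,k;\beta)^2\|\bff\|^2_{\calA^{\gamma-2\beta}}$, and taking square roots gives \eqref{bound}. I do not expect a genuine obstacle here: the argument is a routine spectral calculation, and the only points requiring care are verifying the commutativity that collapses the cross terms into $e(\calA)^2$ and confirming that the spectral containment in $(0,1]$ is exactly what licenses applying the uniform BURA error bound at every $\Lambda_j$.
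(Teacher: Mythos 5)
Your proof is correct and follows essentially the same route as the paper's: diagonalize $\calA$, reduce the weighted norm of the error to a sum over eigenvalues, and bound each scalar residual $|r^\beta_\alpha(\Lambda_j)-\Lambda_j^{\beta-\alpha}|$ by $E_\alpha(m,k;\beta)$ using the spectral containment in $(0,1]$, then substitute $\bfF=\calA^{-\beta}\bff$. If anything, your version is slightly tidier in keeping the square on $E_\alpha(m,k;\beta)^2$ explicit before taking the square root, where the paper's final displayed line drops the exponent.
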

\begin{proof}
 Consider the representation of $\bfF$ with respect to the eigenvectors of $\calA$, 
  $
  \bfF =\sum_{i=1}^N  F_i\bPsi_i 
$  so that  
$  \| \bfF \|^2_{\calA^\gamma} = \sum_{i=1}^N  \Lambda^{\gamma}_i  F_i^2, $ 
for any $  \gamma \in \mathbb R $.
Since $r^\beta_\alpha$ is analytic in 
$(0,1]$, it has convergent Maclaurin expansion there and 
therefore 
$r^\beta_\alpha(\calA)\bPsi_i=r^\beta_\alpha(\Lambda_i)\bPsi_i$, $ i=1,\dots,N$.
Using the orthonormal
property of the eigenvectors  $\bPsi_i^T \bPsi_j := \langle \bPsi_i, \bPsi_j \rangle = \delta_{ij}$ we get easily 
\begin{align*} 
\|\bfu_r-\bfu\|^2_{\calA^{\gamma}} & =
\| r^\beta_\alpha(\calA) \bfF  -   \calA^{\beta-\alpha}\bfF \|^2_{\calA^{\gamma}} \\
&  =
\left\langle\sum_{i=1}^N(\calA^{\gamma}(r^\beta_\alpha(\calA)-\calA^{\beta-\alpha}) F_i\bPsi_i,
\sum_{i=1}^N(r^\beta_\alpha(\calA)-\calA^{\beta-\alpha}) F_i\bPsi_i\right\rangle\\ \notag
&= \sum_{i=1}^{N}
F^2_i\Lambda^{\gamma}_i\left(r^\beta_\alpha(\Lambda_i)-\Lambda^{\beta-\alpha}_i\right)^2 
 \le \max_{t \in [0,1]} | r^\beta_\alpha(t)-t^{\beta -\alpha}|^2 \sum_{i=1}^N  \Lambda^{\gamma}_i F_i^2 \\
&  \le   E_\alpha(m,k;\beta)   \| \bfF \|^2_{\calA^{\gamma}}.
\end{align*}
To complete the proof  take into account that $\bfF = \calA^{-\beta}\bff$.
\end{proof}
It is important to keep in mind that the above estimate is for the scaled system 
\eqref{algebraic}, where
$\bff=\bfftil/\Lambdatt^\alpha$  and $\calA = \calAt/\Lambdatt$. 
As a corollary we get the following bound for the solution through the original (unscaled) data:
\begin{corollary}\label{c:BURAerror}
~~The following estimate holds true for the solution of \eqref{eq:fal}:
\begin{equation}\label{errorU}
\|\bfu_r-\bfu\|_{\calAt^{\gamma}} 
\le  E_\alpha(m,k;\beta) \Lambdatt^{\beta -\alpha}  \| \bfftil \|_{\calAt^{\gamma -2\beta}}.
\end{equation}
\end{corollary}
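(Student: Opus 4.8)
The plan is to obtain \eqref{errorU} directly from Lemma \ref{l:BURAerror} by rewriting every scaled norm in terms of the unscaled matrix $\calAt$ and the unscaled data $\bfftil$. The key preliminary observation is that the rescaling \eqref{algebraic} changes neither the solution vector $\bfu$ nor its approximant $\bfu_r$: since $\calAt = \Lambdatt \calA$ gives $\calAt^\alpha = \Lambdatt^\alpha \calA^\alpha$, the equation $\calAt^\alpha \bfu = \bfftil$ is equivalent to $\calA^\alpha \bfu = \bfftil/\Lambdatt^\alpha = \bff$, so the very same $\bfu$, and hence the same $\bfu_r = r^\beta_\alpha(\calA)\calA^{-\beta}\bff$, solves and approximates both the scaled and the unscaled problem. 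It therefore suffices to translate the norm appearing on each side of \eqref{bound}.

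First I would record the scaling identity for the norms. Because $\calA$ and $\calAt$ are simultaneously diagonalized by the common eigenbasis $\{\bPsi_i\}$, the relation $\calA = \calAt/\Lambdatt$ yields $\calA^\gamma = \Lambdatt^{-\gamma}\calAt^\gamma$ for every $\gamma \in \R$, and consequently
$$
\|\bfv\|_{\calA^\gamma} = \Lambdatt^{-\gamma/2}\,\|\bfv\|_{\calAt^\gamma} \qquad \text{for all } \bfv \in \R^N.
$$
Applying this with $\bfv = \bfu_r - \bfu$ on the left-hand side of \eqref{bound}, and on the right-hand side substituting $\bff = \bfftil/\Lambdatt^\alpha$ together with the same identity at the exponent $\gamma - 2\beta$, the bound of Lemma \ref{l:BURAerror} turns into
$$
\Lambdatt^{-\gamma/2}\,\|\bfu_r-\bfu\|_{\calAt^\gamma} \le E_\alpha(m,k;\beta)\,\Lambdatt^{-\alpha}\,\Lambdatt^{-(\gamma-2\beta)/2}\,\|\bfftil\|_{\calAt^{\gamma-2\beta}}.
$$

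Finally I would multiply through by $\Lambdatt^{\gamma/2}$ and collect the exponents of $\Lambdatt$ on the right, namely
$$
-\alpha - \frac{\gamma-2\beta}{2} + \frac{\gamma}{2} = \beta - \alpha,
$$
which reproduces exactly the factor $\Lambdatt^{\beta-\alpha}$ in \eqref{errorU}. The whole argument is routine bookkeeping; the only point demanding any care is keeping the three sources of $\Lambdatt$-powers straight (the rescaling of $\calA$ in the left norm, the rescaling of $\calA$ in the right norm, and the rescaling of $\bfftil$ into $\bff$) and checking that they combine to the clean exponent $\beta-\alpha$ independently of the free parameter $\gamma$.
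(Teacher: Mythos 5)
Your proposal is correct and follows exactly the route the paper intends: the corollary is stated without proof as an immediate consequence of Lemma \ref{l:BURAerror}, and the intended argument is precisely this unwinding of the scaling $\calAt = \Lambdatt\calA$, $\bff = \bfftil/\Lambdatt^{\alpha}$ in the weighted norms. Your exponent bookkeeping $-\alpha - (\gamma-2\beta)/2 + \gamma/2 = \beta-\alpha$ checks out, and your preliminary remark that $\bfu$ and $\bfu_r$ are unchanged by the rescaling is the one point worth making explicit.
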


Among various classes of best rational approximations, the diagonal sequences 
$r \in {\calR}(k,k)$ of the Walsh table of $t^\alpha$, $0 < \alpha <1$ are 
studied in greatest detail, see, e.g. \cite{stahl2003,varga1992some}. 
The existence of best uniform rational approximation, 
the distribution of the poles, zeros, and extreme points, 
and the asymptotic behavior of $E_\alpha(k,k;\beta)$ when $k \to \infty$  
are well known.  For example, Theorem 1 from \cite{stahl2003}  shows that 
%
$$ 
\lim_{k \to \infty} e^{2 \pi \sqrt{(\beta-\alpha)k} }    E_\alpha(k,k; \beta) =  4^{1+\beta -\alpha} |\sin \pi 
(\beta-\alpha) | 
$$ 
holds for any $0< \alpha <1 \le \beta$, $\beta$ integer.
%
This could be written as an approximate relation (used in practice) 
\begin{equation}\label{eq:asymp-approx}
  E_\alpha(k,k; \beta) \approx  4^{1+\beta -\alpha} |\sin \pi (\beta-\alpha) | e^{-2 \pi \sqrt{(\beta-\alpha)k} }  .  
\end{equation}
In order to convince ourselves in the feasibility of practical use of \eqref{eq:asymp-approx} in 
 Table \ref{ta:error} we present the obtained results for $E_\alpha(k,k;\beta)$ for various $k$ and $\beta$
when the BURA $ P^\ast_k(t)/Q^\ast_k(t) $  is computed using 
the Remez algorithm, see for more details Section \ref{sec:approx}. 
The results show that for relatively small $k$  we can get good approximation $E_\alpha(k,k;\beta)$.
It is quite clear from this table that $E_\alpha(k,k;\beta)$ is a couple of orders of
magnitude smaller for $\beta=3$ compared with $\beta =1$.
However, this comes at a cost. From  \eqref{errorU} we observe that: 
(1)  the errors are measured in two different ways and 
(2) the scaling factor enters into the play with a negative impact on the accuracy for larger $\beta$.
Thus, the values $\beta=2$ and $\beta=3$ have not been used in our computations, we 
are giving the approximation properties of BURA for these values just for comparison.
%
In Table \ref{ta:error} in parenthesis we show the computed 
values from the asymptotic formula \eqref{eq:asymp-approx}. These and other computations,
see e.g. \cite[Tables 2.1 -- 2.7]{varga1992some}, show 
that asymptotic formula is quite accurate and the relation \eqref{eq:asymp-approx} could be used for fairly low $k$.

\begin{table}[h!]
\begin{center}
\begin{tabular}{||c|c|c|c|r|r||}
\hline \hline
$\alpha$&$E_\alpha(5,5;1)$&$E_\alpha(6,6;1)$&$E_\alpha(7,7;1)$&$E_\alpha(5,5;2)$&$E_\alpha(5,5;3)$\\ \hline
   0.75 & 2.7348E-3~(3.60E-3)& 1.4312E-3& 7.8650E-4~(9.82E-4)& 1.9015E-6& 6.8813E-8\\
   0.50 & 2.6896E-4~(3.88E-4)& 1.0747E-4& 4.6037E-5~(6.28E-5)& 9.5789E-7& 5.5837E-8\\
   0.25 & 2.8676E-5~(4.16E-5)& 9.2522E-6& 3.2566E-6~(4.47E-6)& 2.8067E-7& 2.4665E-8\\
\hline \hline
\end{tabular}\\[1ex]
\caption{Errors $ E_\alpha(k,k;\beta)$ of BURA $ P^\ast_k(t)/Q^\ast_k(t) $ of $ t^{\beta -\alpha}$ on $[0,1]$}
\label{ta:error}
\end{center}
\end{table}
%
The theoretical foundation of the proposed method is the following lemma, that is an immediate
consequence of Corollary \ref{c:BURAerror}:   
\begin{lemma}\label{th:error}
For $\beta \ge1$ integer there is a constant $C_{\alpha,\beta}>0$ and 
an integer $k_0 \ge 1$ such that for $k \ge k_0$ the following error bound holds true
$$
\|\bfu_r-\bfu\|_{\calAt^{\gamma}} 
\le  
 C_{\alpha,\beta} \Lambdatt^{\beta -\alpha}  e^{-2 \pi \sqrt{(\beta-\alpha)k} } 
 \| \bfftil \|_{\calAt^{\gamma -2\beta}}.
 $$
\end{lemma}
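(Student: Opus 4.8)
The plan is to obtain the stated bound directly from Corollary~\ref{c:BURAerror}, specialized to the diagonal case $m=k$, together with the Stahl asymptotic limit quoted immediately above the statement. Thus essentially all the analytic content is already in hand, and the only work is to convert the scalar asymptotic into a clean bound valid for all sufficiently large $k$.

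First I would take $m=k$ in Corollary~\ref{c:BURAerror}, which gives
\[
\|\bfu_r-\bfu\|_{\calAt^{\gamma}} \le E_\alpha(k,k;\beta)\,\Lambdatt^{\beta-\alpha}\,\|\bfftil\|_{\calAt^{\gamma-2\beta}}.
\]
This reduces the entire statement to producing a constant $C_{\alpha,\beta}>0$ and an integer $k_0$ with
\[
E_\alpha(k,k;\beta)\le C_{\alpha,\beta}\,e^{-2\pi\sqrt{(\beta-\alpha)k}}\qquad\text{for all }k\ge k_0 .
\]
Second, I would invoke the limit from \cite{stahl2003} (displayed above). Writing $L:=4^{1+\beta-\alpha}|\sin\pi(\beta-\alpha)|$ and $a_k:=e^{2\pi\sqrt{(\beta-\alpha)k}}E_\alpha(k,k;\beta)$, the existence of $\lim_{k\to\infty}a_k=L$ shows that the sequence $(a_k)$ is bounded; in particular there is $k_0$ such that $a_k\le L+1$ for $k\ge k_0$. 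Setting $C_{\alpha,\beta}:=L+1$ and rearranging yields exactly the scalar bound above, since $\beta-\alpha>0$ makes both the square root and the exponential well defined. Substituting this scalar estimate back into the specialized corollary inequality completes the proof.

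The main point to be careful about—rather than a genuine obstacle—is that Stahl's theorem applies to the diagonal sequence $m=k$ of the Walsh table, so I must take $m=k$ throughout; the deep analytic ingredient (the $e^{-2\pi\sqrt{(\beta-\alpha)k}}$ decay rate) is entirely imported from \cite{stahl2003}, and the remainder is just the elementary passage from a convergent sequence to a uniform bound for large indices. One could alternatively absorb the finitely many initial terms with $k<k_0$ by enlarging the constant, thereby stating the bound for every $k\ge 1$; but the ``$k\ge k_0$'' formulation already matches the conclusion as written.
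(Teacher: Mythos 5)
Your proposal is correct and follows exactly the route the paper intends: the paper presents this lemma as an immediate consequence of Corollary~\ref{c:BURAerror} combined with Stahl's asymptotic for $E_\alpha(k,k;\beta)$, and your only added content — converting the convergent sequence $e^{2\pi\sqrt{(\beta-\alpha)k}}E_\alpha(k,k;\beta)\to 4^{1+\beta-\alpha}|\sin\pi(\beta-\alpha)|$ into a uniform bound $C_{\alpha,\beta}$ for $k\ge k_0$ — is precisely the elementary step the paper leaves implicit.
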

\begin{remark}\label{k-bound}
~ The error in solving the algebraic problem \eqref{eq:fal} using the proposed method should be
balanced with the approximation error given by \eqref{FEMerror}. In the case of second order problems
on a quasi-uniform mesh with size $h$ 
we have $\Lambdatt\approx h^{-2}$. 
Then in case of 
best possible convergence rate of the finite element solution, namely, $O(h^{2\alpha}|\ln h|)$,
cf. \cite[Remark 4.1]{BP15}, we can take 
$$
k \approx \frac{\beta^2}{\pi^2(\beta-\alpha)}
|\ln h|^2,
$$ 
and get the total error $O(h^{2\alpha}|\ln h|)$  (this includes the
finite element approximation error and error 
of approximately solving the algebraic problem).
\end{remark}
This represents the foundation of the method we propose and study in this paper. 
The feasibility of such approach depends substantially on the possibility to  
efficiently compute $r_\alpha^\beta(\calA)\bff$.
One possible implementation is proposed in next 
{subsection.}


\subsection{Efficient implementation of the method}\label{sec:implementation}

We first bring some important 
{facts} 
about the best uniform rational approximation
$r^1_\alpha(t)$,  $m=k$, of $t^{1-\alpha}$ on $[0,1]$ for $0< \alpha <1$, see, e.g. 
\cite{saff1992asymptotic, stahl2003}.
\begin{lemma}\label{BURA_caracter} ~ (\cite[Lemma 2.1]{saff1992asymptotic}) 
~Let $m=k$ and $0< \alpha <1$. Then the following statements are valid:
\begin{enumerate}
\item The best rational approximation $r^1_\alpha(t)$ has numerator and denominator of 
exact degree $k$;
\item All $k$ zeros $\zeta_1, \dots, \zeta_k$ and poles $d_1, \dots, d_k$ of $r^1_\alpha$ are real
and negative and are interlacing, i.e. with appropriate numbering one has
$$
0 > \zeta_1 > d_1 > \zeta_2 > d_2 > \dots > \zeta_k > d_k > - \infty;
$$
\item The function $t^{1-\alpha} - r^1_\alpha(t)$ has exactly $2k+2$ extreme points
$\eta_1, \dots, \eta_{2k+2}$ on $[0,1]$ and with appropriate numbering we have
$$
0=\eta_1 < \eta_2 < \dots < \eta_{2k+2} = 1,
$$
$$
\eta_j^{1-\alpha} -r^1_{\alpha}(\eta_j)= (-1)^j E_\alpha(k,k;1), \ \ j=1, \dots, 2k+2.
$$
\end{enumerate}
\end{lemma}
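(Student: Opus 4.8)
The plan is to derive all three statements from the classical Chebyshev characterization of best rational Chebyshev approximation, feeding in the special analytic structure of $g(t)=t^{1-\alpha}$ with $0<1-\alpha<1$. Recall the Achieser--Chebyshev theorem: if $r^\ast=P^\ast/Q^\ast\in\calR(k,k)$ is written in lowest terms with $\deg P^\ast=d$, $\deg Q^\ast=e$, and defect $\delta=\min(k-d,\,k-e)$, then $r^\ast$ is the best uniform approximant if and only if $g-r^\ast$ equioscillates, i.e. attains $\pm\max_{[0,1]}|g-r^\ast|$ with strictly alternating signs on at least $2k+2-\delta$ points of $[0,1]$. Thus statement (3) (exactly $2k+2$ alternation points with the stated sign pattern) and statement (1) (exact degree $k$, equivalently $\delta=0$) are two faces of the same normality fact, while statement (2) is a localization of the poles and zeros. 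The routine parts will be the equioscillation bookkeeping; the real content sits in normality and in the confinement of poles and zeros to the negative axis.

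First I would establish normality, $\delta=0$, which simultaneously yields (1) and the count $2k+2$ in (3). The input here is that $t^{1-\alpha}$ is not a rational function and extends analytically to $\CC\setminus(-\infty,0]$ with a single algebraic branch point at the origin; in particular the diagonal entries of its Walsh table are normal, so the best approximant has numerator and denominator of exact degree $k$ and the error oscillates the maximal number $2k+2$ of times. The sign normalization $\eta_j^{1-\alpha}-r^1_\alpha(\eta_j)=(-1)^j E_\alpha(k,k;1)$ and the endpoint locations $\eta_1=0$, $\eta_{2k+2}=1$ are then read off from $g(0)=0$ together with the vertical tangent of $t^{1-\alpha}$ at the origin, which forces the extremal error to be attained at $t=0$ and at the right endpoint $t=1$.

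The heart of the argument, and the step I expect to be the main obstacle, is statement (2). I would first show that the $k$ poles $d_j$ are real and negative and that, in the partial-fraction expansion $r^1_\alpha(t)=c_0+\sum_{j=1}^k c_j/(t-d_j)$, all residues satisfy $c_j>0$. For this I would use that $t^{1-\alpha}$ is a complete Bernstein function, admitting the Stieltjes-type representation $t^{1-\alpha}=\frac{\sin\pi(1-\alpha)}{\pi}\int_0^\infty \frac{t}{t+s}\,s^{-\alpha}\,ds$; this identifies $t^{1-\alpha}$, up to an elementary transformation, with a Markov function whose generating measure lives on $(-\infty,0]$, and for such functions the poles of the best rational approximant are confined to the support of the measure, i.e. to the negative real axis, with positive residues inherited from the positivity of the measure. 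Granting the negativity of the $d_j$ and the positivity of the $c_j$, the reality and interlacing of the zeros becomes elementary: on each interval between consecutive poles the derivative $\frac{d}{dt}\,r^1_\alpha(t)=-\sum_{j=1}^k c_j/(t-d_j)^2$ is strictly negative, so $r^1_\alpha$ decreases monotonically from $+\infty$ to $-\infty$ and has exactly one simple real negative zero in each such gap, which pins down the strict interlacing $0>\zeta_1>d_1>\dots>\zeta_k>d_k>-\infty$.

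Finally I would assemble the pieces: (3) from equioscillation plus normality, (1) from $\delta=0$, and (2) from the Markov/complete-Bernstein localization combined with the monotonicity argument for the partial fractions. The $2k+1$ sign changes of $g-r^\ast$ in $(0,1)$ guaranteed by (3) provide a consistency check on the zero count, but the decisive input remains the confinement of the $d_j$ and the positivity of the $c_j$, since these are precisely what turn the abstract equioscillation statement into the explicit interlacing structure; I would expect the detailed verification of that confinement to be where essentially all the work lies.
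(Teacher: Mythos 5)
You should first note that the paper does not prove this lemma at all --- it is imported verbatim from \cite[Lemma 2.1]{saff1992asymptotic} --- so there is no in-paper proof to compare against and your sketch must stand on its own. It does not, and the decisive problem is a sign error at the heart of your argument for item (2). The residues of $r^1_\alpha$ at its poles are \emph{negative}, not positive: the positive partial-fraction coefficients in this circle of ideas belong to $t^{-1}r^1_\alpha(t)$, the approximation of the genuine Stieltjes/Markov function $t^{-\alpha}$, and the paper's own identity \eqref{eq:coeff representation} (with $\beta=1$), $c_j=c^\ast_j/d_j$ with $c_j>0$ and $d_j<0$, forces $c^\ast_j<0$; see also Remark~\ref{rm:aaa} and the tables. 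Your computed derivative $\frac{d}{dt}r^1_\alpha(t)=-\sum_j c_j/(t-d_j)^2<0$ would make $r^1_\alpha$ strictly decreasing, which is impossible for a function lying within $E_\alpha(k,k;1)\ll1$ of the increasing function $t^{1-\alpha}$ on $[0,1]$; worse, with positive residues the gap $(d_1,0)$ contains no zero at all, since then $r^1_\alpha$ decreases from $+\infty$ at $d_1^+$ to $r^1_\alpha(0)=E_\alpha(k,k;1)>0$, so the claimed interlacing $0>\zeta_1>d_1$ would fail. With the correct signs the one-zero-per-gap monotonicity argument does recover the interlacing, but the slip shows the ``Markov function'' reduction was never actually carried out: $t^{1-\alpha}$ is $t$ times a Markov function, not a Markov function, and the pole-localization and residue-positivity facts you invoke are theorems about the latter. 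Since you yourself concede that this confinement step is ``where essentially all the work lies,'' item (2) is asserted rather than proved.

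The normality step has a gap of the same kind. The inference ``$t^{1-\alpha}$ is non-rational and analytic off a cut with one branch point, hence the diagonal Walsh-table entries are normal'' is not a valid deduction; normality must be proved, and the standard tool --- entirely absent from your sketch --- is the generalized Descartes rule for M\"untz polynomials: $Q(t)\,t^{1-\alpha}-P(t)=\sum_j q_j t^{\,j+1-\alpha}-\sum_j p_j t^{\,j}$ has pairwise distinct real exponents (because $0<\alpha<1$) and at most $\deg P+\deg Q+2$ terms, hence at most $\deg P+\deg Q+1$ zeros in $(0,\infty)$. Comparing this upper bound with the at least $2k+1-\delta$ sign changes of the error supplied by the Chebyshev--Achieser alternation theorem forces $\deg P=\deg Q=k$, $\delta=0$, and exactly $2k+2$ alternation points, i.e.\ items (1) and the count in (3). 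Without this counting device your proof of (1) and (3) reduces to quoting the conclusion. In short, the architecture you propose (alternation $+$ degree count $+$ pole localization $+$ monotone interlacing) is the right one and matches how this result is established in the literature the paper cites, but both load-bearing steps are missing or, in the case of the residue signs, wrong as written.
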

%
Then we introduce  $d_0=0$ so that $r^1_\alpha(t)$ is represented  as a sum of partial fractions
\begin{equation}\label{eq:simplified}
t^{-1}r^1_\alpha(t) := \displaystyle \frac{1}{t} ~ {P^\ast_k(t) \over Q^\ast_k(t) }=
 \frac{1}{t} ~ {\sum_{j=0}^k p_jt^j \over  \sum_{j=0}^k q_jt^j} =
 \sum\limits_{j=0}^{k}{c_j \over t-d_j}.
\end{equation} 
These notations are used in the tables below. 

%
This Lemma allows us to have the following implementation of the method:\\
Step 1: Find all poles $0=d_0 > d_1 >  d_2 > \dots >  d_k $;\\
Step 2: Find the representation \eqref{eq:simplified} of $r^1_k(t)$ as a sum of partial fractions;\\
Step 3: Compute the approximate solution  by 
\begin{equation}\label{eq:Rgeneral}
\bfu_r := \calA^{- 1}r^1_\alpha(\calA) \bff   =
\sum_{j=0}^{k} {c_j}{(\calA - d_j \calI)^{-1}} \bff \\
\end{equation}

This shows that to find 
$\bfu_r= r^1_\alpha(\calA) \calA^{-1}\bff$ we need to
solve one system $\calA \bfv= \bff$ and $k$ separate independent systems 
$(\calA - d_i \calI) \bfv = \bff$ for $i=1, \dots k$  with SPD matrices $ \calA - d_i \calI$.

\begin{remark}\label{rm:aaa}
~Our numerical tests show that often we achieve accuracy of 
$E_\alpha(k,k;1) \approx 10^{-4}$ or better 
with $k=5$. For example, for $\alpha =0.5$ and $k=5$ we get 
$E_\alpha(k,k;1)=2.69*10^{-4}$. The coefficients of the 1-BURA are
given in Table \ref{ta:coeff2} and they result in the following partial fractions representation: 
\begin{eqnarray}\label{fractions}
\frac{r^1_{0.5}(t)}{t} = {P^\ast_5(t) \over t\,Q^\ast_5(t)}
 =  \frac{0.0002689}{t}\ +\ \frac{0.0055848}{t+0.0000122} 
                        &+& \frac{0.0272036}{t+0.0006621} \\ \nonumber
                       \ +\ \frac{0.0965749}{t+0.0127955}
                       \ +\ \frac{0.3202068}{t+0.1626313}
                        &+& \frac{2.5105702}{t+3.2129222}.                                          
\end{eqnarray}
\end{remark}
We note that  the nonpositivity of $d_j$ ensure that the sytems 
$(\calA - d_j \calI) \bfv = \bff$ can be solved efficiently and the positivity of $c_j$, 
shown in \cite[Theorem 1]{HM17},
guarantees no loss of significant digits due to subtraction of large numbers.

\begin{remark}\label{r:product}
~ Using representation of $r^1_\alpha(t)$ by partial fractions is just one possible way to 
compute $  r^1_\alpha(\calA) \calA^{-1} \bff$. Another possibility is to use 
the zeros and the poles to compute consecutively the factors in the formula
$$
 r^1_\alpha(\calA) \calA^{-1} \bff 
= c_0 \prod_{j=1}^k (\calA - \zeta_j \calI)(\calA -d_j \calI)^{-1} \calA^{-1} \bff.
$$
Due to the interlacing of the zeroes and the poles this will lead to stable computations.
Moreover, it will preserve the monotonicity of the solution (see \cite{HM17}), which is a desired feature 
in some applications.  The only substantial difference is that the computations with partial fractions can be
done in parallel.
\end{remark}

Now we present some examples of BURA $r^1_\alpha$ within the class 
$\mathcal R(k,k)$ for $\alpha =0.75, 0.5, 0.25$.
%
In Tables \ref{ta:coeff1} - \ref{ta:coeff3} 
we show the computed coefficients \eqref{eq:simplified} of  $r^1_\alpha$ for 
$\alpha =\ 0.75,\, 0.5,\, 0.25$.

\begin{table}[h!]
\begin{center}
\begin{tabular}{|| c | c | c | c | c ||}
\hline \hline
$j$ &$p_j$ & $ q_j$   &   $ c_j$ & $d_j$ \\ \hline
0&1.98976E-20&7.27576E-18&2.73478E-03& 0.00000E+00\\
1&5.72723E-12&2.23068E-10&2.28202E-02&-3.27111E-08\\
2&1.76902E-06&1.96679E-05&6.31334E-02&-1.14734E-05\\
3&5.86823E-03&2.45055E-02&1.45484E-01&-8.15164E-04\\
4&4.89312E-01&8.76333E-01&3.05748E-01&-2.80630E-02\\
5&1.40048E+00&1.00000E+00&8.60558E-01&-8.47443E-01\\
 \hline \hline
\end{tabular}\\[1ex]
\caption{
The coefficients in the representation \eqref{eq:simplified} of the best rational approximation $ 
P^\ast_5(t)/Q^\ast_5(t) $
of $ t^{1 -\alpha}$ on $[0,1]$, $\alpha=0.75$; from Table \ref{ta:error} we have $E_\alpha(5,5;1)=$2.7348E-3
}
\label{ta:coeff1}
\end{center}
\end{table}
\begin{table}[h!]
\begin{center}
\begin{tabular}{|| c | c | c | c | c ||}
\hline \hline
$j$ &$p_j$ & $ q_j$   &   $ c_j$ & $d_j$ \\ \hline
0&1.45636E-14&5.41485E-11&2.68957E-04& 0.00000E+00\\
1&2.87192E-08&4.51317E-06&5.58483E-03&-1.22320E-05\\
2&2.69846E-04&7.06745E-03&2.72036E-02&-6.62106E-04\\
3&8.87796E-02&5.67999E-01&9.65749E-02&-1.27955E-02\\
4&1.91330E+00&3.38902E+00&3.20207E-01&-1.62631E-01\\
5&2.96041E+00&1.00000E+00&2.51057E+00&-3.21292E+00\\
 \hline \hline
\end{tabular}\\[1ex]
\caption{
The coefficients in the representation \eqref{eq:simplified} of the best rational approximation $ 
P^\ast_5(t)/Q^\ast_5(t) $
of $ t^{1 -\alpha}$ on $[0,1]$, $\alpha=0.5$; from Table \ref{ta:error} we have $E_\alpha(5,5;1)=$2.6896E-4
}
\label{ta:coeff2}
\end{center}
\end{table}
\begin{table}[h!]
\begin{center}
\begin{tabular}{|| c | c | c | c | c ||}
\hline \hline
$j$ &$p_j$ & $ q_j$   &   $ c_j$ & $d_j$ \\ \hline
0&3.45490E-12&1.20483E-07&2.86755E-05& 0.00000E+00\\
1&1.58841E-06&7.90871E-04&1.27509E-03&-1.59055E-04\\
2&4.13469E-03&2.10628E-01&9.58752E-03&-3.96701E-03\\
3&5.71109E-01&4.81422E+00&4.86842E-02&-4.47241E-02\\
4&7.40426E+00&1.11966E+01&2.55382E-01&-3.97136E-01\\
5&9.24225E+00&1.00000E+00&8.92729E+00&-1.07506E+01\\
 \hline \hline
\end{tabular}\\[1ex]
\caption{
The coefficients in the representation \eqref{eq:simplified} of the best rational approximation $ 
P^\ast_5(t)/Q^\ast_5(t) $
of $ t^{1 -\alpha}$ on $[0,1]$, $\alpha=0.25$; from Table \ref{ta:error} we have $E_\alpha(5,5;1)=$2.8676E-5
}
\label{ta:coeff3}
\end{center}
\end{table}

Based on these results we can make the following observations:
\begin{enumerate}
\item  Tables \ref{ta:coeff1} - \ref{ta:coeff3}  show that the approximation of the action of $\calA^{-\alpha}$ via 
the application of    
the operator $\calA^{-1} r^1_\alpha(\calA)$ involves solving six systems of linear equations 
with SPD matrices; we have assumed that each evaluation of $(\calA - d_j I)^{-1}\bff $
can be computed approximately by
PCG method with optimal complexity.

\item Summing these six solutions is a stable process since the coefficients in the sum of fractions \eqref{fractions}
are small and positive and there should not expect any loss of accuracy (or stability) that might come 
from subtracting large numbers.

\end{enumerate}

\section{Best uniform rational approximation of $t^{\beta-\alpha}$} 
\label{sec:approx}

\subsection{Theoretical background and numerical methods}\label{sec:theory}

Let $r^\beta_\alpha(t)=\frac{P^{\ast}_m(t)}{Q^{\ast}_k(t)}$ be the $\beta$-BURA of $t^{\beta-\alpha}$ for $t\in[0,1]$.
For given $k$ and $m$, the rational function has the following representation:
\begin{equation}\label{eq1bua}
{P^\ast_m(t)\over Q^\ast_k(t)} = 
{\sum\limits_{j=0}^{m} p_j  t^j \over \sum\limits_{j=0}^{k} q_j t^j} =  
{\sum\limits_{j=0}^{m} \bar{p}_j T_j(2t-1) \over \sum\limits_{j=0}^{k} \bar{q}_j T_j(2t-1)}
= {\bar{P}^\ast_m(s)\over \bar{Q}^\ast_k(s)}
\end{equation}
where $T_0(s)=1,\ T_1(s)=s,\ \ldots, T_j(s)=2s T_{j-1}(s)-T_{j-2}(s),\ j=2,3,\ldots ;\ s\in [-1,1]$
 are orthogonal base functions. These are the well-known Chebyshev polynomials and in our case
 $s=2t-1$, since $t\in [0,1]$. 

According to the theory, for the class of continuous functions on $[0,1]$ the element of best uniform approximation 
exists. Due to the equioscillation theorem, there are at least $(m+k+2)$ points $\{\eta_i\}_1^{m+k+2}$,
where the error $r^\beta_\alpha(t)-t^{\beta-\alpha}$ have extremes and the sign alternates. 
We use orthogonal base functions, because there are numerical difficulties (instabilities) for finding $r^\beta_\alpha$ 
in the standard monomial basis $\{t^j\}$ (see \cite{Dunham1984}). 
The benefits of working in Chebyshev basis are illustrated in Table~\ref{tabLastS}, where the maximal values of $m$ and 
$k$ for which the element of best uniform approximation of $t^{1-\alpha}$ can be successfully computed (the algorithm 
converges) are documented. The left pairs in the table correspond to best polynomial approximation ($k=0$), while the 
right ones correspond to best $(k,k)$-rational approximation ($m=k$). It is evident that apart from the choice of base 
functions, calculations heavily depend on the used precision for arithmetic operations (single, double, quadruple). 
This is due to the non-differentiability of $t^{1-\alpha}$ at zero. The function is only 
$(1-\alpha)$-H\"older continuous (i.e. in $C^{0,1-\alpha}[0,1]$) 
and as a result most of the extreme points $\{\eta_i\}_1^{2k+2}$ of 
$r^1_\alpha$ are clustered in a neighborhood of zero to account for the steep slope there. For example, when $k=5$ and 
$\alpha=0.75$ the first two points are $\eta_1=0$ and $\eta_2\approx 3\cdot10^{-9}$, while the ninth point value is 
still 
just $\eta_9\approx 0.05$. Therefore, to accurately compute $\{\eta_i\}$ and capture the sign 
changes of $r^1_\alpha(t)-t^{1-\alpha}$ between them one must use high precision arithmetics.   
This fact has been known and attempts to compute the BURA and the error $E_\alpha(k,k;\beta)$ 
for large $k$ has required using high precision arithmetic, e.g., see \cite{varga1992some}.

\begin{table}[h!]
\begin{center}
\begin{tabular}{||l|c|c|c|c||}
 \hline\hline
  Precision & Base     & $\alpha=0.25$ & $\alpha=0.50$ & $\alpha=0.75$ \\ \hline
  Single  & ${t^j}   $ &  (8,0), (4,4) & (8,0), (4,4)  & (8,0), (2,2)  \\
  Single  & ${T_j(s)}$ & (40,0), (3,3) &(50,0), (2,2)  &(50,0), (2,2)  \\
  Double  & ${t^j}   $ & (19,0), (6,6) &(19,0), (4,4)  &(19,0), (2,2)  \\
  Double  & ${T_j(s)}$ & (60,0), (5,5) &(60,0), (5,5)  &(50,0), (4,4)  \\
  Quadro  & ${t^j}   $ & (35,0), (6,6) &(35,0), (4,4)  &(35,0), (2,2)  \\
  Quadro  & ${T_j(s)}$ &(95,0), (11,11)&(95,0), (11,11)&(95,0), (7,7)\\ \hline \hline
\end{tabular}\\[1ex]
\caption{Maximal values $(m,k)$ for which Algorithm~\ref{alg1} converges.}\label{tabLastS}
\end{center}
\end{table}

\subsection{Modified Remez algorithm for computing BURA}
\label{sec:algorithm}

We suggest the following (modified Remez) algorithm for finding
the $\beta$-BURA of $t^{\beta-\alpha}$ on $[0,1]$.
To improve the stability of the approximation method we use the presentation 
\eqref{eq1bua}, so that we work with the function $f(s)=\left (\frac{1+s}{2} \right ) ^ {\beta - \alpha}$
for $s\in [-1,1]$ (see \cite{PGMASA1987}, \cite{CheneyPowell1987}).

\noindent
\hrulefill
\begin{algorithm}\label{alg1}
~~{\bf Input:} $(\alpha,\beta)$, $(m,k)$, $N$ (maximal number of algorithm iterations), $V$ (maximal number of inside 
iterations for solving the non-linear system in Step 3(ii)), $\delta > 0$ (accuracy).
\\{\bf Initialization:} $\ell$, $s^{(0)}$, and $\bar{r}_0$, satisfying  
\begin{itemize}
\item $\ell=m+k+2$.
 \item $\left\{ s_i^{(0)}\right\}_{i=1}^{\ell}$ -- strictly monotonically increasing sequence in $[-1,1]$. 
\item $\bar{r}_0(s)={\bar{P}_m(s) \over \bar{Q}_k(s)}\;:\;\left(f(s_i^{(0)})-\bar 
r_0(s_i^{(0)})\right)\;/\;\left(f(s_{i+1}^{(0)})-\bar r_0(s_{i+1}^{(0)})\right)<0,\;\forall i=1,\dots,\ell-1$.
\end{itemize}
{\bf FOR} $n=1,2,\dots$ {\bf DO}
\begin{enumerate}
 \item {\em Updating the equioscillation point set:} FOR $i=1,\dots,\ell$ DO
 \begin{itemize}
  \item[(i)] $\ubar{\tau}^{(n)}_i:=\displaystyle\sup_{-1\le\tau\le s^{(n-1)}_i}\left\{f(\tau)=\bar 
r_{n-1}(\tau)\right\},\qquad
  \bar{\tau}^{(n)}_i:=\displaystyle\inf_{s^{(n-1)}_i\le\tau\le 1}\left\{f(\tau)=\bar r_{n-1}(\tau)\right\}.$
  \item[(ii)] $s^{(n)}_i=\displaystyle\argmax_{\ubar{\tau}^{(n)}_i\le s\le \bar{\tau}^{(n)}_i}|f(s)-\bar 
r_{n-1}(s)|,\qquad \eta^{(n)}_i=|f(s^{(n)}_i)-\bar r_{n-1}(s^{(n)}_i)|$. END FOR 
\item[(iii)] $s^{(n)}_\ast=\displaystyle\argmax_{-1\le s\le 1}|f(s)-\bar r_{n-1}(s)|,\qquad 
\eta^{(n)}_\ast=|f(s^{(n)}_\ast)-\bar r_{n-1}(s^{(n)}_\ast)|$.
\item[(iv)] IF $\left(s^{(n)}_\ast\notin\{s^{(n)}_i\}_1^\ell\right)$ THEN FIND $j$ s.t. 
$s^{(n)}_j<s^{(n)}_\ast<s^{(n)}_{j+1}$.\\ IF $\left(\sgn\left(f(s^{(n)}_\ast)-\bar 
r_{n-1}(s^{(n)}_\ast)\right)=\sgn\left(f(s^{(n)}_j)-\bar r_{n-1}(s^{(n)}_j)\right)\right)$ THEN 
$s^{(n)}_j=s^{(n)}_\ast$.\\ ELSE $s^{(n)}_{j+1}=s^{(n)}_\ast$. 
 \end{itemize}
\item {\em Convergence check:} IF $\left(\max_i\eta^{(n)}_i-\min_i\eta^{(n)}_i<\delta\right)$ OR 
$\left(n=N+1\right)$ {\bf STOP}. ELSE
\item {\em Updating the rational approximation:} Solve iteratively the non-linear system  
 $$ \left(f\left(s_i^{(n)}\right) - \bar{r}_{n}\left( s_i^{(n)}\right) \right) = (-1)^i E_n, \qquad 
i=1,\ldots,\ell$$
for the unknown $E_n$ and the coefficients of $\bar{r}_n$: 
\begin{itemize}
 \item[(i)] $(E^0_n\;,\;\bar{r}^0_n)=(E_{n-1}\;,\;\bar{r}_{n-1})$.
 \item[(ii)] FOR $v=1,2,\dots$ DO: Solve the $\ell\times\ell$ linear system of equations
 $$\sum\limits_{j=0}^{m} \bar{p}_j^{(n,v)} T_j(s_i^{(n)}) -  \left(f(s_i^{(n)}) - (-1)^i E_n^{(v-1)} \right)  
\sum\limits_{j=1}^{k} \bar{q}_j^{(n,v)} T_j(s_i^{(n)})  +(-1)^i E_n^{(v)}=f(s_i^{(n)}).$$
IF $\left| E_n^{(v)}-E_n^{(v-1)}\right|<\epsilon$, OR $v>V$ GO to (iii). ELSE $v=v+1$ and REPEAT.
\item[(iii)] $\bar{r}_n(s)=\left (\sum_{j=0}^{m} \bar{p}_j^{(n,v)} T_j(s) \right 
)/\left(1+\sum_{j=1}^{k}\bar{q}_j^{(n,v)} T_j(s) \right)$. 
\item[(iv)] $E_n=E_n^{(v)}$.
\end{itemize}
\item $n=n+1$. GO to Step 1. 
\end{enumerate}
{\bf Output:} $n;\quad \bar{r}^\beta_\alpha(s)=\bar{r}_n;\quad 
E_\alpha(m,k;\beta)=|E_n|;\quad s^\ast=\{s_i^{(n)}\}_{i=1}^{\ell}$.\\
\end{algorithm}
\noindent
\hrulefill

Then we take $\eta=(s^\ast+1)/2$ and get $r^\beta_\alpha(t)$ from $\bar{r}^\beta_\alpha(s)$ using 
\eqref{eq1bua}. 

Several remarks, concerning the computer implementation of the proposed algorithm are in order: 
(1) In the {\em 
Initialization} step, we usually take $s^{(0)}$ to be uniformly sampled on $[-1,1]$, while for the derivation of an 
admissible $\bar{r}_0$ we apply least-squares optimization techniques. All the rational functions $\bar{r}_n$ are 
normalized with respect to the constant term in the denominator, i.e.,
$$ \bar{r}_n(s)=\left (\sum\limits_{j=0}^{m} \bar{p}_j^{(n)} T_j(s) \right )/\left ( 1+\sum\limits_{j=1}^{k} 
\bar{q}_j^{(n)} T_j(s) \right )\qquad\forall n\ge0.$$
(2) In order to increase the computational efficiency of the algorithm, we compute neither the sequences 
$\ubar{\tau}^{(n)}$ and $\bar{\tau}^{(n)}$ in {\em Step 1(i)} nor the local extrema $s^{(n)}$ in {\em Step 1(ii)}. 
Instead, we search for the maximal value of $|f(s)-\bar r_{n-1}(s)|$ on a small, discretized interval around 
$s^{(n-1)}_i$, decrease the mesh size, and repeat the process several times around the current maximizer. Such 
simple localization techniques seem to work fine for our numerical examples. \\
(3) In {\em Step 3(ii)} we apply 
Aitken-Steffensen acceleration but instead of $E^{(v-1)}_n$ for the system splitting we use a combination of the 
values$\{E^{(v-i)}_n\}_{i=1}^3$ from the previous three steps.

\section{Numerical accuracy and Multi-step BURA method}\label{sec:multi-step BURA}
In this section we investigate the numerical accuracy of the proposed algorithm and a multi-step generalization of the 
BURA-method. The analysis, presented here is theoretical in nature, so we consider the full generality of the proposed 
solution strategy, namely the $(m,k)$ $\beta$-BURA approximation. 
\subsection{Properties of the fractional decomposition}\label{subsec:preliminaries}
For given $(m,k,\beta)$ the partial fraction decomposition of $t^{-\beta}r^\beta_\alpha(t)$ has the general form
\begin{equation}\label{eq:FactFull}
t^{-\beta}r^\beta_\alpha(t) =
\sum_{j=0}^{m-k-\beta} b_j\, t^{j}+
\sum_{j=1}^{\beta}\frac{c_{0,j}}{t^j}    +\sum_{j=1}^{p_1}
\frac{c_j}{t - d_j} + \sum_{j=1}^{p_2} \frac{B_j t + C_j}{(t-F_j)^2 + D_j^2}
\end{equation}
where $k=p_1+2\,p_2$. We always consider triples for which $m<k+\beta$. One reason for such a parameter 
constraint comes from 
the fact that $t^{-\beta}r^\beta_\alpha$ has a leading term of degree $t^{m-k-\beta}$, while it approximates the power 
function $t^{-\alpha}$, $\alpha>0$. Another reason is the numerical 
simplification of \eqref{eq:FactFull}, where the 
index set for the first sum becomes empty. 
In all our numerical examples the denominator of $r^\beta_\alpha$ has no 
complex roots, thus we concentrate on the case $p_2=0$ from now on. 
Then  $\beta$-BURA can be rewritten in the following way:
\begin{equation}\label{eq:other}
 \frac{1}{t^\beta} ~ {P^\ast_m(t) \over Q^\ast_k(t) }= 
{ {\sum\limits_{j=0}^{m} p_j~t^j} \over t^{\beta} \left ({ \sum\limits_{j=0}^{k} q_j~t^j} \right )} =  
 \sum\limits_{j=1}^{\beta}{c_{0,j} \over t^j} 
+ \sum\limits_{j=1}^{k}{c_j \over t-d_j}. 
\end{equation}
The first representation in (\ref{eq:other}) is the best approximation written 
as a standard rational function, while the second one is 
its partial fraction decomposition \eqref{eq:FactFull}, the way this approximation is used in the 
implementation of the method.

Let 
\begin{equation*}
{P^\ast_m(t) \over Q^\ast_k(t) }=\sum_{j=0}^{\beta-1}b^\ast_j t^j+\sum_{j=1}^k \frac{c^\ast_j}{t-d_j}. 
\end{equation*}
We have used that $\beta>m-k$ and we set the extra coefficients 
$\{b^\ast_j\}_{m-k+1}^{\beta-1}$ to zero  whenever $m-k<\beta-1$. 
Then, straightforward computations give rise to
\begin{equation}\label{eq:other2}
\frac{1}{t^\beta} ~ {P^\ast_m(t) \over Q^\ast_k(t) 
}=\sum_{j=1}^\beta\frac{b^\ast_{\beta-j}}{t^j}+\sum_{j=1}^k\left(\frac{c^\ast_j/d^\beta_j}{t-d_j}-\sum_{i=1}^\beta 
\frac{c^\ast_j/d^{\beta-i+1}_j}{t^i} \right) 
\end{equation}
Comparing the coefficients in front of the corresponding terms in \eqref{eq:other} and \eqref{eq:other2}, we derive
\begin{equation}\label{eq:coeff representation}
c_{0,j}=b^\ast_{\beta-j}-\sum_{i=1}^k c^\ast_i/d^{\beta-j+1}_i,\qquad
c_j=c^\ast_j/d^\beta_j.
\end{equation}
Various useful identities follow from \eqref{eq:coeff representation}. We want to highlight a couple of them. Due to 
the Chebyshev's equioscillation theorem
\begin{equation}\label{eq:c0,beta}
c_{0,\beta}=b^\ast_0-\sum_{i=1}^kc^\ast_j/d_j= {P^\ast_m(0) \over Q^\ast_k(0)}=\frac{p_0}{q_0}=\pm E_\alpha(m,k;\beta).
\end{equation} 
In particular, for $(k,k;1)$ we have $c_0=E_\alpha(k,k;1)$ due to Lemma~\ref{BURA_caracter}.

The second one is
\begin{equation}\label{eq:sum coeff}
c_{0,1}+\sum_{i=1}^k c_i=b^\ast_{\beta-1}=\left\{\begin{array}{ll} p_m/q_k, & m-k=\beta-1;\\ 0, & 
m-k<\beta-1.\end{array}\right.
\end{equation}
Finally, \eqref{eq:coeff representation} allows for stable numerical computations of the coefficients $\{c_j\}$, 
as the fractional decomposition of $r^\beta_\alpha$ can be accurately derived in Chebyshev basis.
\subsection{Accuracy Analysis}\label{42-accur}

In this subsection we briefly discuss issues related to the numerical accuracy of the developed framework. We do not go 
into details, since thorough analysis of the algorithm is outside the scope of the paper. However, certain 
observations in this direction are worth mentioning, so that the reader can make conclusions for the full picture. 

Lemma~\ref{l:BURAerror} quantifies the error between $\bfu_r= r^\beta_\alpha(\calA) \calA^{-\beta} \bff$ and 
$\bfu=  \calA^{-\alpha} \bff$. All the estimations are under the assumption that $\bfu_r$ can be exactly computed by an 
optimal numerical solver. Within the adopted setup $m<k+\beta$ and $p_2=0$, $\bfu_r$ has the following representation
\begin{equation}\label{eq:fract.decomp.ur}
\bfu_r=\sum_{i=1}^{\beta}c_{0,i}\calA^{-i}\bff + \sum_{i=1}^{k} c_i(\calA - d_i I)^{-1} \bff
  :=\sum_{i=1}^{\beta}c_{0,i}\bfv_{0,i}+\sum_{i=1}^{k} {c_i}\bfv_i, 
\end{equation}
which is the corresponding simplification of \eqref{eq:Rgeneral}. The practical derivation of $\bfu_r$ involves 
$k+\beta$ applications of such a solver, that independently solves each of the involved large-scale linear systems
with a right-hand-side  $\bff$. The numerical stability of each solution process 
depends on the condition number of the underlined linear operator. 

\begin{remark}\label{remark: conditioning}
~The matrix $\calA-d_iI$ is better conditioned than $\calA$ whenever $d_i<0$ or $d_i>\Lambda_1+\Lambda_N$.
If $d_i>\Lambda_1+\Lambda_N$, the condition numler $\mathrm{k}(-\calA+d_iI)$ is uniformly bounded dependinding 
only on $d_i$.   
\end{remark}

Since we are interested in operators $\calA$ which spectrum is normalized to lie inside 
$(0,1]$, and is not well-separated from zero, in every numerical example we have 
$\Lambda_1\approx0$ and $\Lambda_N\approx1$. 
The poles of $r^\beta_\alpha$ are outside of (a neighborhood of) the unit interval $[0,1]$, 
therefore all the $d_i$'s naturally satisfy the condition in Remark~\ref{remark: conditioning}. Thus, the numerical 
computation $\bfv^{MG}_i$ of $\bfv_i=(\calA - d_i I)^{-1}\bff$ , $i=1,\dots,k$ is a stable process.  

When $\beta=1$, using the notation \eqref{eq:simplified}, we observe that the most time-consuming procedure is the 
derivation of $\bfv^{MG}_0$ that corresponds to inverting $\calA$ ($\bfv_0=\calA^{-1}\bff$). In our numerical 
experiments, we use algebraic multigrid (AMG) \cite{HENSON2002155} as a preconditioner in a CG method. The same 
preconditioner can be used to operators like $\calA-d_iI$. We already 
observed that, provided $m=k$, all the coefficients $c_i$ are positive and sum to $p_m/q_m$ (see \eqref{eq:sum 
coeff}). The ratio $p_m/q_m$ increases with $\alpha$ (see Tables \ref{ta:coeff1} - \ref{ta:coeff3} for $m=k=5$) but 
seems 
to always be $\mathcal{O}(1)$. Therefore, for this setting the accuracy of the numerical derivation of $\bfu^{MG}_r$ is 
proportional to the accuracy of inverting $\calA$. Hence, numerics are trustworthy in general and unsubstantial 
additional errors for $\bfu^{MG}_r-\bfu$ are accumulated.

\subsection{Further analysis in the case $\beta >1$}
\label{sec:more}

When $\beta>1$, since $\kkk(\calA^\beta)=\kkk(\calA)^\beta$, 
we need to find an approximate solution of a system with much worst condition number than the original system.
This could cause loss of stability (or loss of accuracy).
Since we solve $\calA^\beta\bfv=\bff$ iteratively 
via $\beta$ consecutive applications of the AMG solver as a preconditioner for $\calA$,
we need to analyze the stability of such computational strategy.

\begin{lemma}\label{thm:2-step PCG accuracy}  
~~Let $\mu, \nu, \varepsilon >0$  be given and $\bfv=\calA^{-n}\bff$, where $n\ge2$. Assume 
that $\bfz^{MG}$ is
a numerical solution for $\bfz=\calA^{-(n-1)}\bff$, while $\bfv^{MG}$ 
is a numerical solution for $\bar\bfv=\calA^{-1}\bfz^{MG}$. Then
\begin{equation}\label{eq:output}
 \frac{\|\bfz^{MG} -\bfz\|_{\calA^{-1}}}{\| \bfz \|_{\calA^{-1}}} \le \mu \varepsilon, \quad  
  \frac{\|\bfv^{MG}- \bar \bfv \|_\calA}{\| \bar \bfv \|_{\calA}}\le\nu\varepsilon
\quad \mbox{imply} \quad  
\frac{\|\bfv^{MG}-\bfv \|_\calA}{\| \bfv \|_\calA} \le (\mu+\nu+\mu\nu\varepsilon)\;\varepsilon.
\end{equation}
\end{lemma}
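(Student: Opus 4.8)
The plan is to reduce everything to a single triangle inequality in the $\calA$-norm, after exploiting the intermediate "exact solve with perturbed right-hand side" vector $\bar\bfv=\calA^{-1}\bfz^{MG}$ that the statement already supplies. First I would write
$$
\|\bfv^{MG}-\bfv\|_\calA \le \|\bfv^{MG}-\bar\bfv\|_\calA + \|\bar\bfv-\bfv\|_\calA
$$
and control the two pieces separately. The first piece is immediately governed by the second hypothesis, $\|\bfv^{MG}-\bar\bfv\|_\calA \le \nu\varepsilon\|\bar\bfv\|_\calA$; to express this through $\|\bfv\|_\calA$ rather than $\|\bar\bfv\|_\calA$ I would insert $\|\bar\bfv\|_\calA \le \|\bfv\|_\calA + \|\bar\bfv-\bfv\|_\calA$, which is precisely what later produces the cross term $\mu\nu\varepsilon$ in the final constant.

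The crux of the argument — and the only step I expect to be non-routine — is the treatment of $\|\bar\bfv-\bfv\|_\calA$, since it is here that the two different norms appearing in the hypotheses ($\|\cdot\|_{\calA^{-1}}$ at the $\bfz$-level, $\|\cdot\|_\calA$ at the $\bfv$-level) must be reconciled. The key observation is that $\bar\bfv-\bfv = \calA^{-1}(\bfz^{MG}-\bfz)$, because $\bfv=\calA^{-1}\bfz$, combined with the symmetry of $\calA$, which yields the isometry
$$
\|\calA^{-1}\bfw\|_\calA^2 = \bfw^T\calA^{-1}\bfw = \|\bfw\|_{\calA^{-1}}^2 .
$$
Applying this with $\bfw=\bfz^{MG}-\bfz$ converts the $\calA$-norm discrepancy into an $\calA^{-1}$-norm discrepancy, so the first hypothesis becomes usable: $\|\bar\bfv-\bfv\|_\calA = \|\bfz^{MG}-\bfz\|_{\calA^{-1}} \le \mu\varepsilon\|\bfz\|_{\calA^{-1}}$. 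Running the same isometry in reverse on $\bfz=\calA\bfv$ gives $\|\bfz\|_{\calA^{-1}} = \|\bfv\|_\calA$, which is exactly the normalization making the two relative errors compatible and collapsing the right-hand side to a multiple of $\|\bfv\|_\calA$.

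Finally I would assemble the pieces. Substituting $\|\bar\bfv-\bfv\|_\calA \le \mu\varepsilon\|\bfv\|_\calA$ into the bound from the first paragraph gives
$$
\|\bfv^{MG}-\bfv\|_\calA \le \nu\varepsilon\|\bfv\|_\calA + (1+\nu\varepsilon)\mu\varepsilon\|\bfv\|_\calA = (\mu+\nu+\mu\nu\varepsilon)\,\varepsilon\,\|\bfv\|_\calA,
$$
which is the claimed estimate after dividing by $\|\bfv\|_\calA$. It is worth emphasizing that no spectral bounds on $\calA$ enter this argument: the entire proof rests only on the SPD structure guaranteed by Assumption~\ref{aa1} — symmetry for the isometry identity and positive definiteness so that the $\calA^{\pm1}$-norms are well defined — in contrast to the condition-number–dependent accumulation quantified in Corollary~\ref{cor:beta accuracy}.
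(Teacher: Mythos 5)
Your proof is correct and follows essentially the same route as the paper's: the same triangle inequality through $\bar\bfv$, the same isometry $\|\calA^{-1}\bfw\|_\calA=\|\bfw\|_{\calA^{-1}}$ to convert $\|\bar\bfv-\bfv\|_\calA$ into the $\bfz$-level hypothesis, and the same assembly yielding $(\mu+\nu+\mu\nu\varepsilon)\varepsilon$. No discrepancies to report.
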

\begin{proof}
Applying triangle inequality, we derive 
\begin{align*}
\|\bfv^{MG} - \bfv\|_\calA\le 
\|\bfv^{MG} - \bar \bfv\|_\calA+ \|\bar \bfv -\bfv\|_\calA 
                                    & \le\nu\varepsilon\|\bar \bfv \|_\calA + \|\bar \bfv-\bfv\|_\calA \\
                                     & \le\nu\varepsilon\|\bfv\|_\calA
                                     +(1+\nu\varepsilon)\|\bar \bfv-\bfv\|_\calA;\\
\|\bar \bfv-\bfv\|_\calA=\|A^{-1}(\bfz^{MG} -\bfz)\|_\calA=\| \bfz^{MG} - \bfz \|_{\calA^{-1}} 
                                     & \le\mu\varepsilon\|\bfz\|_{\calA^{-1}}
                                     =\mu\varepsilon\|\calA 
\bfv\|_{\calA^{-1}}=\mu\varepsilon\|\bfv\|_\calA.
\end{align*}
These imply                                      
$
\|\bfv^{MG} -\bfv\|_\calA\le\left(\mu+\nu+\mu\nu\varepsilon\right)\varepsilon\|\bfv\|_{\calA},
$
which completes the proof.
\end{proof}

Note that $\bfz$ serves as a right-hand-side for the linear system $\calA\bfv=\bfz$. 
Thus, for the stability in computing $\bfv$ the $\bfz$-related quantities 
need to be measured in the $\|\cdot\|_{\calA^{-1}}$ norm.  

Now we are ready to quantify the accuracy of the numerical derivation of $\bfv_{0,\beta}=\calA^{-\beta}\bff$ under the 
proposed above computational strategy. Iteratively, we define $\bfv^{MG}_{0,1}$ to be the 
output of the applied optimal solver (the notation MG doesn't mean that only
multigrid solver can be used) to the linear system $\bfv_{0,1}=\calA^{-1}\bff$ and 
$$\bfv^{MG}_{0,j+1}\text{ -- numerical solution of } \calA\bfv=\bfv^{MG}_{0,j},\qquad j=1,\dots,\beta-1.$$ 

\begin{corollary}\label{cor:beta accuracy}
~~Let $\beta\ge1$ and $\bfv^{MG}_{0,\beta}$ be derived as described above. Assume that a numerical solver for the 
system 
$\calA\bfv=\bfz$, with arbitrary $\bfv,\bfz\in\mathbb R^N$ has computed $  \bfv^{MG}$
with guaranteed relative error  $\varepsilon$, i.e. 
$${\|\bfv^{MG}-\bfv\|_{\calA}}/{\|\bfv\|_{\calA}}\le \varepsilon.$$
Then
\begin{equation*}
{\|\bfv^{MG}_{0,\beta}-\bfv_{0,\beta}\|_{\calA}}/{\|\bfv_{0,\beta}\|_{\calA}}\le a_\beta\varepsilon
\quad \mbox{with} \quad a_{\beta+1}=1+(1+\varepsilon)\kkk(\calA)a_\beta,\quad a_1=1.
\end{equation*}
Consequently, $a_\beta=\mathcal{O}(\kkk(\calA)^{\beta-1})$.
\end{corollary}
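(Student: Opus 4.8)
The plan is to argue by induction on $\beta$, using Lemma~\ref{thm:2-step PCG accuracy} as the engine for the inductive step. The base case $\beta=1$ is immediate: by construction $\bfv^{MG}_{0,1}$ is the solver output for $\bfv_{0,1}=\calA^{-1}\bff$, so the standing hypothesis on the solver gives $\|\bfv^{MG}_{0,1}-\bfv_{0,1}\|_\calA/\|\bfv_{0,1}\|_\calA\le\varepsilon$, i.e.\ $a_1=1$.

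For the inductive step I would assume the claim for $\beta$ and examine $\bfv_{0,\beta+1}=\calA^{-(\beta+1)}\bff$. By definition the iterate $\bfv^{MG}_{0,\beta+1}$ is produced by applying the solver once to the system $\calA\bfv=\bfv^{MG}_{0,\beta}$, i.e.\ with the \emph{numerical} previous iterate as right-hand side. Writing $\bar\bfv_{0,\beta+1}:=\calA^{-1}\bfv^{MG}_{0,\beta}$ for the exact solution of that perturbed system, the solver accuracy yields $\|\bfv^{MG}_{0,\beta+1}-\bar\bfv_{0,\beta+1}\|_\calA/\|\bar\bfv_{0,\beta+1}\|_\calA\le\varepsilon$, which is precisely the second hypothesis of Lemma~\ref{thm:2-step PCG accuracy} with $\nu=1$, under the identification $\bfz=\bfv_{0,\beta}$, $\bfz^{MG}=\bfv^{MG}_{0,\beta}$, $\bfv=\bfv_{0,\beta+1}$, and $n=\beta+1\ge 2$.

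The delicate point, and the step I expect to be the main obstacle, is supplying the \emph{first} hypothesis of that lemma, which measures the incoming error in the $\calA^{-1}$-norm, whereas the induction hypothesis controls it only in the $\calA$-norm. To bridge the two I would invoke the two-sided spectral comparison $\Lambda^2_1\langle\calA^{-1}\bfv,\bfv\rangle\le\langle\calA\bfv,\bfv\rangle\le\Lambda^2_N\langle\calA^{-1}\bfv,\bfv\rangle$, which converts each norm at the price of a factor $\Lambda_1^{-1}$ (on the numerator) or $\Lambda_N^{-1}$ (on the denominator); applied to the relative error this gives
$$
\frac{\|\bfv^{MG}_{0,\beta}-\bfv_{0,\beta}\|_{\calA^{-1}}}{\|\bfv_{0,\beta}\|_{\calA^{-1}}}\le\kkk(\calA)\,\frac{\|\bfv^{MG}_{0,\beta}-\bfv_{0,\beta}\|_{\calA}}{\|\bfv_{0,\beta}\|_{\calA}}\le\kkk(\calA)\,a_\beta\varepsilon ,
$$
so the first hypothesis holds with $\mu=\kkk(\calA)\,a_\beta$. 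Substituting $\mu=\kkk(\calA)a_\beta$ and $\nu=1$ into the conclusion of Lemma~\ref{thm:2-step PCG accuracy} produces the relative $\calA$-error bound $(\mu+\nu+\mu\nu\varepsilon)\varepsilon=\bigl(1+(1+\varepsilon)\kkk(\calA)a_\beta\bigr)\varepsilon$, which is exactly $a_{\beta+1}\varepsilon$ for the stated recurrence, closing the induction.

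Finally, to read off the asymptotics I would treat $\varepsilon$ as a fixed constant and solve the affine recurrence $a_{\beta+1}=1+(1+\varepsilon)\kkk(\calA)a_\beta$, $a_1=1$: its homogeneous growth factor $(1+\varepsilon)\kkk(\calA)$ dominates the constant forcing term, so $a_\beta$ grows like $\bigl((1+\varepsilon)\kkk(\calA)\bigr)^{\beta-1}$, giving $a_\beta=\mathcal{O}(\kkk(\calA)^{\beta-1})$. The only genuine care required is bookkeeping the norm-conversion factor $\kkk(\calA)$ and verifying that the identification of the quantities with those in Lemma~\ref{thm:2-step PCG accuracy} is consistent with the recursive definition of the iterates $\bfv^{MG}_{0,j}$.
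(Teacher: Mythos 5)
Your proposal is correct and follows essentially the same route as the paper's own proof: induction on $\beta$, with the norm conversion via the spectral inequalities supplying the first hypothesis of Lemma~\ref{thm:2-step PCG accuracy} with $\mu=\kkk(\calA)a_\beta$ and $\nu=1$. The identification of the iterates and the resulting recurrence match the paper exactly.
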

\begin{proof}
The proof is by induction. For $\beta=1$, we have that $\bfv^{MG}_{0,1}$ is the solver output for 
$\bfv_{0,1}=\calA^{-1}\bff$. Thus, $a_1=1$ follows directly from the assumption on the solver accuracy. Now, let 
$$
{\|\bfv^{MG}_{0,\beta}-\bfv_{0,\beta}\|_{\calA}}/{\|\bfv_{0,\beta}\|_{\calA}}\le a_\beta\varepsilon
$$
holds true for $\beta$. Denote by $\bar\bfv_{0,\beta+1}:=\calA^{-1}\bfv^{MG}_{0,\beta}$. Again, due to the solver 
accuracy, we have
$$
{\|\bfv^{MG}_{0,\beta+1}-\bar\bfv_{0,\beta+1}\|_{\calA}}/{\|\bar\bfv_{0,\beta+1}\|_{\calA}}\le \varepsilon.
$$
Applying the obvious inequalities  $\Lambda^2_1 \langle\calA^{-1}\bfv,\bfv\rangle \le \langle\calA \bfv,\bfv\rangle \le 
\Lambda^2_N \langle\calA^{-1}\bfv,\bfv\rangle $
we obtain
$$\frac{\|\bfv^{MG}_{0,\beta}-\bfv_{0,\beta}\|_{\calA^{-1}}}{\|\bfv_{0,\beta}\|_{\calA^{-1}}}\le
\frac{\Lambda^{-1}_1\|\bfv^{MG}_{0,\beta}-\bfv_{0,\beta}\|_{\calA}}{\Lambda^{-1}_N\|\bfv_{0,\beta}\|_{\calA}}\le
\kkk(\calA)\frac{\|\bfv^{MG}_{0,\beta}-\bfv_{0,\beta}\|_{\calA}}{\|\bfv_{0,\beta}\|_{\calA}}\le 
\kkk(\calA)a_\beta\varepsilon.$$
The result follows from Lemma~\ref{thm:2-step PCG accuracy} which we apply with $\mu=\kkk({\calA})a_\beta$ and 
$\nu=1$.
\end{proof}

The coefficient $c_{0,\beta}=\pm E_\alpha(m,k;\beta)$, due to \eqref{eq:c0,beta}. 
Therefore the numerical accuracy for the computation of the term 
$c_{0,\beta}\bfv_{0,\beta}$ in $\bfu_r$ (see (\ref{eq:fract.decomp.ur})) depends on the product 
$E_\alpha(m,k;\beta)\mathrm 
k(\calA)^{\beta-1}$. 
For $\beta >1$ this product contains two factors that behave differently when $\beta$ grows:  
the first decreases (see, Table \ref{ta:error}) while the  second increases.
As a result, we conclude that computing with $\beta =1$ is a reasonable practical choice.
\subsection{Multi-step BURA approximation}\label{sec:multiBURA}
From Table~\ref{ta:error} we observe that when $\alpha$ increases, so does the error $E_\alpha(k,k;\beta)$. In 
particular, for $\beta=1$, the quantities $E_{0.25}(k,k;1)$, $E_{0.50}(k,k;1)$ and $E_{0.75}(k,k;1)$ are 
all of different order. This is due to the steeper slopes in a neighborhood of zero for the function 
$t^{1-\alpha}$, which results in higher and more frequent oscillations of the residual 
$r^1_\alpha(t)-t^{1-\alpha}$ there. Apart from such theoretical drawbacks, there are also additional numerical 
difficulties with the convergence of Algorithm~\ref{alg1} as the set of extreme points 
$\{\eta_i\}_1^{2k+2}$ for the residual cluster around zero (see \cite[Theorem 4]{saff1992asymptotic}). Indeed, higher 
numerical precision is needed for the correct separation of the extreme points, as well as more internal and 
external iterations are executed for solving the ill-conditioned linear systems in Step 3(ii) and for reaching the 
stopping criterion of the Algorithm, respectively. As an alternative approach, we study the possibility to replace the 
action of $r^1_\alpha$ by the joint action of several $r^1_{\alpha_i}$ rational functions, where each $\alpha_i$ is 
smaller than the original $\alpha$, thus $r^1_{\alpha_i}$ is cheaper to be generated and its approximation error 
$E_{\alpha_i}(k,k;1)$ is smaller. 

Our idea is to apply a multi-step procedure, based on the identity
$$
\calA^{-\alpha}\bff=\calA^{-\alpha_n}\circ\calA^{-\alpha_{n-1}}\circ\dots\circ\calA^{-\alpha_1}\bff,
\qquad\sum_{i=1}^n \alpha_i=\alpha.
$$ 
First, we approximate $\calA^{-\alpha_1}\bff$ by $\bfu_1:=r^1_{\alpha_1}(\calA)\calA^{-1}\bff$. Then we approximate 
$\calA^{-\alpha_2}\circ\calA^{-\alpha_1}\bff$ by $\bfu_2:=r^1_{\alpha_2}(\calA)\calA^{-1}\bfu_1$ and so on. Finally, we 
approximate $\bfu=\calA^{-\alpha}\bff$ by $\bfu_n=r^1_{\alpha_n}(\calA)\calA^{-1}\bfu_{n-1}$. Following \eqref{errorU} 
and setting $\gamma=1$ we are interested in the theoretical and numerical behavior of the error ratio 
$\|\bfu_n-\bfu\|_{\calA}/\|\bff\|_{\calA^{-1}}$.

The theoretical error analysis is based on Lemma~\ref{l:BURAerror}. Denote by $\varepsilon_i(t)$ the 
residual of $r^1_{\alpha_i}(t)$ with respect to $t^{1-\alpha_i}$. Then, for each $i=1,\dots,n$ we have
\begin{equation}\label{eq:residual}
 r^1_{\alpha_i}(t)=t^{1-\alpha_i}+\varepsilon_i(t),\qquad |\varepsilon_i(t)|\le E_{\alpha_i}(k,k;1)\quad \forall 
t\in[0,1].
\end{equation}
The multi-step approximation $\bfu_n$ can be rewritten in the form
\begin{equation*} 
\bfu_n=\prod_{i=1}^{n}r^1_{\alpha_i}(\calA)\,\calA^{-n}\bff=\left(\calA^{n-1}\prod_{i=1}^{n}r^1_{\alpha_i}(\calA)\right)
\calA^ {-1} \bff.
\end{equation*}
Therefore the proof of Lemma~\ref{l:BURAerror} implies that we need to estimate the approximation error
\begin{equation}\label{eq:multi-step error}
E_{\alpha_1\dots\alpha_n}(k,k;1):=\max_{t\in\{\Lambda_i\}_1^N}\left|\frac{r^1_{\alpha_1}(t)r^1_{\alpha_2}(t)\dots 
r^1_{\alpha_n}(t)}{t^{n-1}}-t^{1-\alpha} \right|.
\end{equation}
Consider $n=2$. Using \eqref{eq:residual} we obtain
\begin{equation*}
\frac{r^1_{\alpha_1}(t)r^1_{\alpha_2}(t)}{t}=t^{1-\alpha}+t^{-\alpha_1}\varepsilon_2(t)+t^{-\alpha_2}\varepsilon_1(t)+ 
t^{-1}\varepsilon_1(t)\varepsilon_2(t).
\end{equation*}
Denote by $E_{\alpha_i}=E_{\alpha_i}(k,k;1)$, $i=1,2$.
Since the spectrum of $\calA$ is 
normalized and $\Lambda_N  \le 1$, 
so that 
$ t^{-\alpha_i} \le \Lambda_1^{-\alpha_i} \approx k(\calA)^{\alpha_i}$
and from \eqref{eq:multi-step error} we conclude
\begin{equation}\label{eq:general 2-step error}
E_{\alpha_1\alpha_2}(k,k;1)\le 
E_{\alpha_2}\mathrm{k}(\calA)^{\alpha_1} +
E_{\alpha_1}\mathrm{k}(\calA)^{\alpha_2} +
E_{\alpha_1}E_{\alpha_2}\mathrm{k}(\calA).
\end{equation}
Comparing with the error estimate $E_\alpha(k,k;1)$ we observe that unlike the direct approach, the two-step 
approximation error depends on the condition number of $\calA$, thus is not dimension-invariant in general. 
Therefore, the benefits from the proposed multi-step procedure for computing $\calA^{-\alpha}\bff$ 
are limited in theory since the overall error is magnified by the condition number of $\calA$.  

On the other hand, the multi-step BURA method possesses several interesting properties that are worth investigating 
further. First of all, it provides good approximation on the high part of the spectrum of $\calA$. From 
Lemma~\ref{BURA_caracter} we know that 
$$\left(r^1_{\alpha_i}(t)-t^{1-\alpha_i}\right)\big|_{t=1}=-E_{\alpha_i},\qquad i=1,2,$$
so for $\bff=\Psi_N$ we get
$${\|\bfu_2-\calA^{-\alpha}\Psi_N\|_{\calA}}/{\|\Psi_N\|_{\calA^{-1}}}\approx 
E_{\alpha_1}+E_{\alpha_2}-E_{\alpha_1}E_{\alpha_2},$$
since $\Lambda_N\approx1$. This is much better than $E_\alpha(k,k;1)$. Thus, as $N\to\infty$, the approach is 
beneficial when $\bff\in 
span\{\Psi_{N-\ell},\dots,\Psi_N\}$ for some $\ell\ll N$. Second of all, especially if $\alpha_1\neq\alpha_2$, 
$E_{\alpha_1\alpha_2}$ might remain significantly smaller than the right-hand-side of \eqref{eq:general 2-step error}. 
Indeed, the extreme points of $r^1_{\alpha_1}$ and $r^1_{\alpha_2}$ are with high probability disjoint sets, therefore 
$|\varepsilon_1(t)|$ and $|\varepsilon_2(t)|$ cannot simultaneously attend their maximums, meaning that the factor 
$E_{\alpha_1}E_{\alpha_2}$ in front of $\mathrm{k}(\calA)$ is an overestimate.
\section{Numerical tests}
\label{sec:NumTest}

A comparative analysis of the numerical accuracy of the proposed 
solvers and the related theoretical estimates are 
presented in this section. The first group of tests concerns normalized 
matrices obtained from a three-point 
approximation of the Poisson equation in one space dimension. 
For this setting we are able to 
directly compute the 
exact solution $\bfu=\calA^{-\alpha}\bff$, as well as the approximate solution 
$\bfu_r= r^\beta_\alpha(\calA) \calA^{-\beta} \bff$, 
thus no additional numerical errors are accumulated in the process. The 
first experimental set is devoted to the numerical validation of Lemma~\ref{l:BURAerror}. 
The second one studies possible improvements in the accuracy of the approximation 
$\bfu_r$ for larger $\alpha$ when a multi-step 
approximation process that involves smaller $\alpha$'s is applied. 
A third experiment deals with a 2D fractional 
Laplacian operator and illustrates the rescaling effect in \eqref{errorU}.
Finally, we confirm the accuracy 
analysis in Section~\ref{42-accur} by running 3D numerical experiments, 
where $\bfu$ is unknown, while $\bfu_r$ is 
computed by a numerical solver that uses algebraic multigrid as a 
preconditioner in the conjugate gradient method. 
\begin{table}[h!]
\begin{center}
\begin{tabular}{||r|r|r|r|r|r|r||}
\hline \hline
$\alpha$&$E_\alpha(5,5;1)$&$E_\alpha(5,4;2)$&$E_\alpha(5,3;3)$&$E_\alpha(7,7;1)$&$E_\alpha(7,6;2)$&$E_\alpha(7,5;3)$\\ 
\hline
   0.75 & 2.7348E-3& 3.8415E-6& 4.6657E-7& 7.8650E-4& 2.0108E-7& 6.6194E-9\\
   0.50 & 2.6896E-4& 2.0349E-6& 4.0421E-7& 4.6037E-5& 7.8577E-8& 4.3899E-9\\
   0.25 & 2.8676E-5& 6.2333E-7& 1.8958E-7& 3.2566E-6& 1.8043E-8& 1.5792E-9\\
   0.10 & 4.9432E-6& 1.7490E-7& 6.7114E-8& 4.5139E-7& 4.2824E-9&  4.7675E-10\\
\hline \hline
\end{tabular}\\[1ex]
\caption{
Errors $ E_\alpha(m,m+1-\beta;\beta)$ of BURA $ P^\ast_m(t)/Q^\ast_{m+1-\beta}(t) $
of $ t^{\beta -\alpha}$ on $[0,1]$ for $m=5,7$.
}\label{tab: errors}
\end{center}
\end{table}
In all the experiments we take $m+1=k+\beta$, thus we solve $m+1$ linear systems in 
order to determine $\bfu_r$ (see \eqref{eq:Rgeneral}). We consider $m=\{5,7\}$, $\beta=\{1,2\}$, and 
$\alpha=\{0.25,0.5,0.75\}$. For each of the corresponding BURA functions all the zeros $d_i$ of the denominator are 
real and of multiplicity one, so only systems of the type ${(\calA - d_i I)^{-1}} \bff$ appear. 
The approximation 
errors are summarized in Table~\ref{tab: errors}. 
In the discussion below the Euclidean norm of a vector in $\R^N$ is denoted as $\ell^2$-norm.

\subsection{Numerical validation of Lemma~\ref{l:BURAerror}}\label{sec41}
We consider the $N\times N$ stiffness matrix $\mathcal A$, corresponding to 
a three-point finite difference approximation (or FE approximation with linear elements)
of the operator  $ \calL u=-u^{\prime \prime}$   with zero Dirichet boundary conditions
on a uniform partitioning of  $(0,1)$   with  {mesh-size} 
$h=1/(N+1)$. The tridiagonal matrix is normalized so that its spectrum lies 
inside 
$[0,1]$  and has enrties  $1/2$ on 
the main diagonal and $-1/4$  {on the upper and lower co-diagonals.}

The eigenvalues and eigenvectors of $\mathcal A$ are
$$\Lambda_i=\sin^2\left(\frac{i\pi}{2(N+1)}\right),\qquad \bPsi_i=\left\{\sin\frac{ik\pi}{N+1}\right\}_{k=1}^N,\qquad 
i=1,\dots,N.$$
Note that all the eigenvectors $\bPsi_i$ are of the same length, due to 
\begin{align*}
\|\bPsi_i\|_2^2&=\langle\bPsi_i,\bPsi_i\rangle=\sum_{k=1}^N\sin^2\frac{ik\pi}{N+1}=\frac{N}{2}-\frac12\sum_{k=1}
^N\cos\frac{2ik\pi}{N+1}=\frac{N+1}{2}
\end{align*}
so we do not normalize them.

\begin{figure}[thp]
\begin{center}
\includegraphics[width=0.48\textwidth]{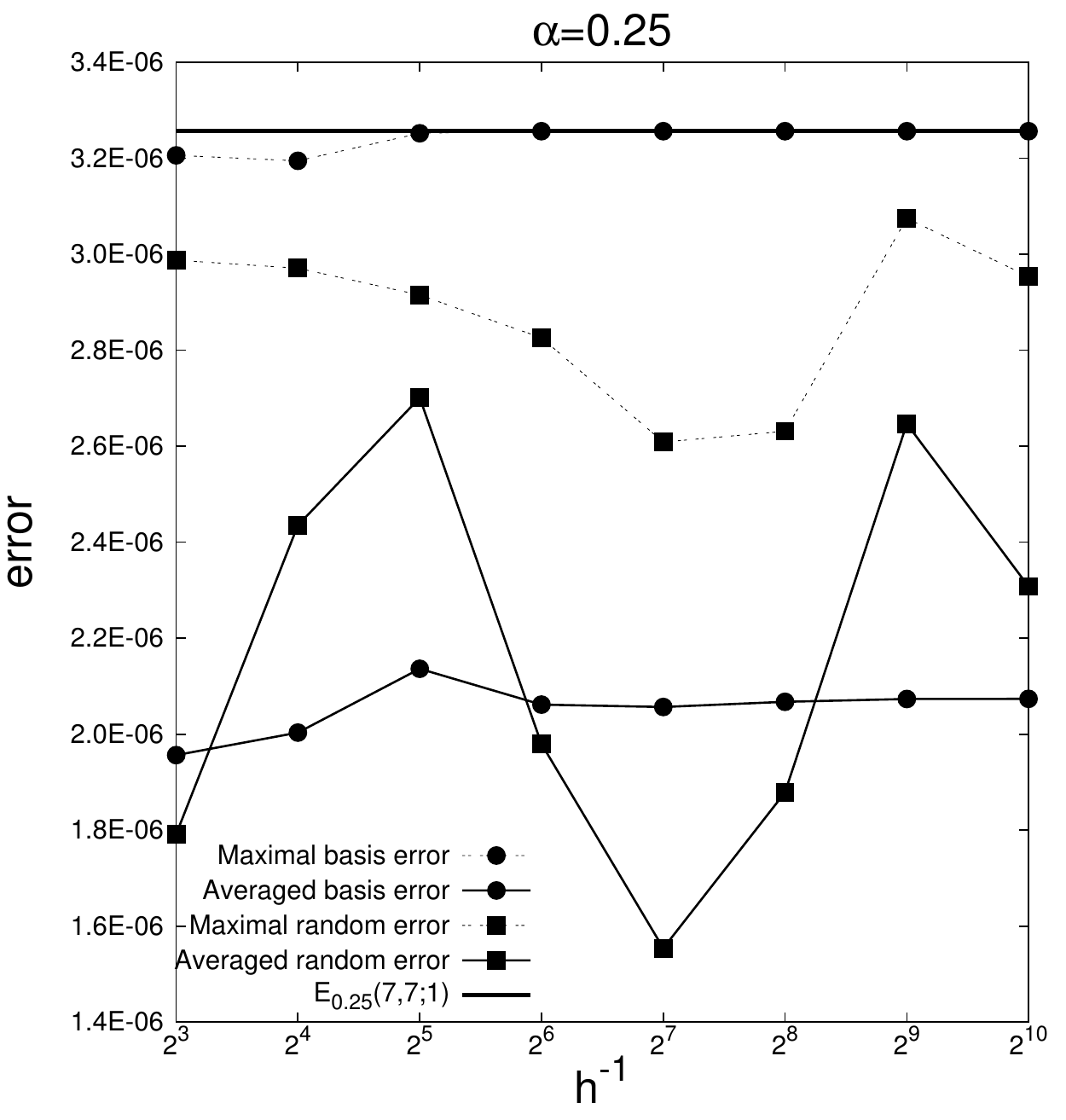} 
\includegraphics[width=0.48\textwidth]{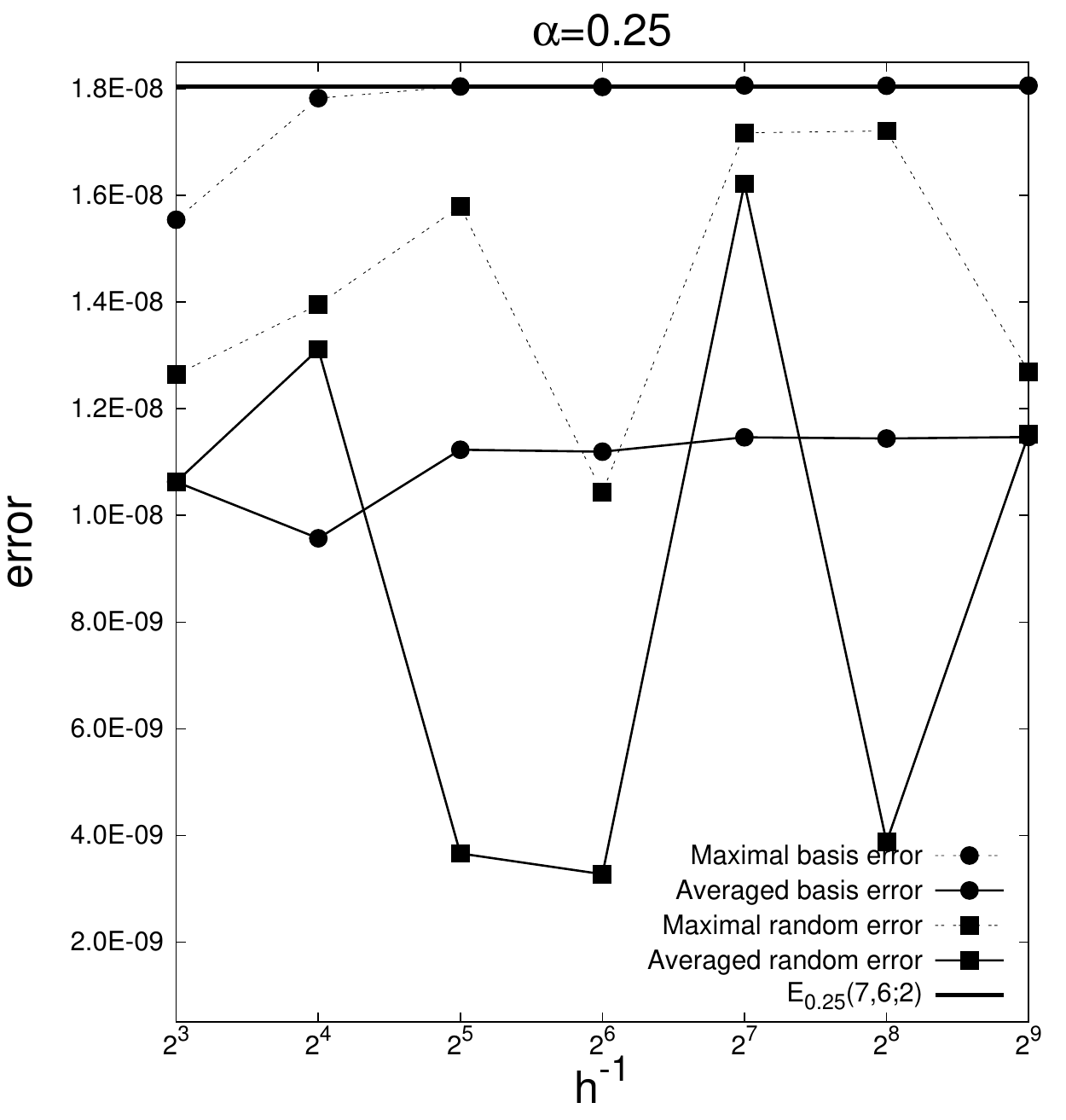}\\ 
\includegraphics[width=0.48\textwidth]{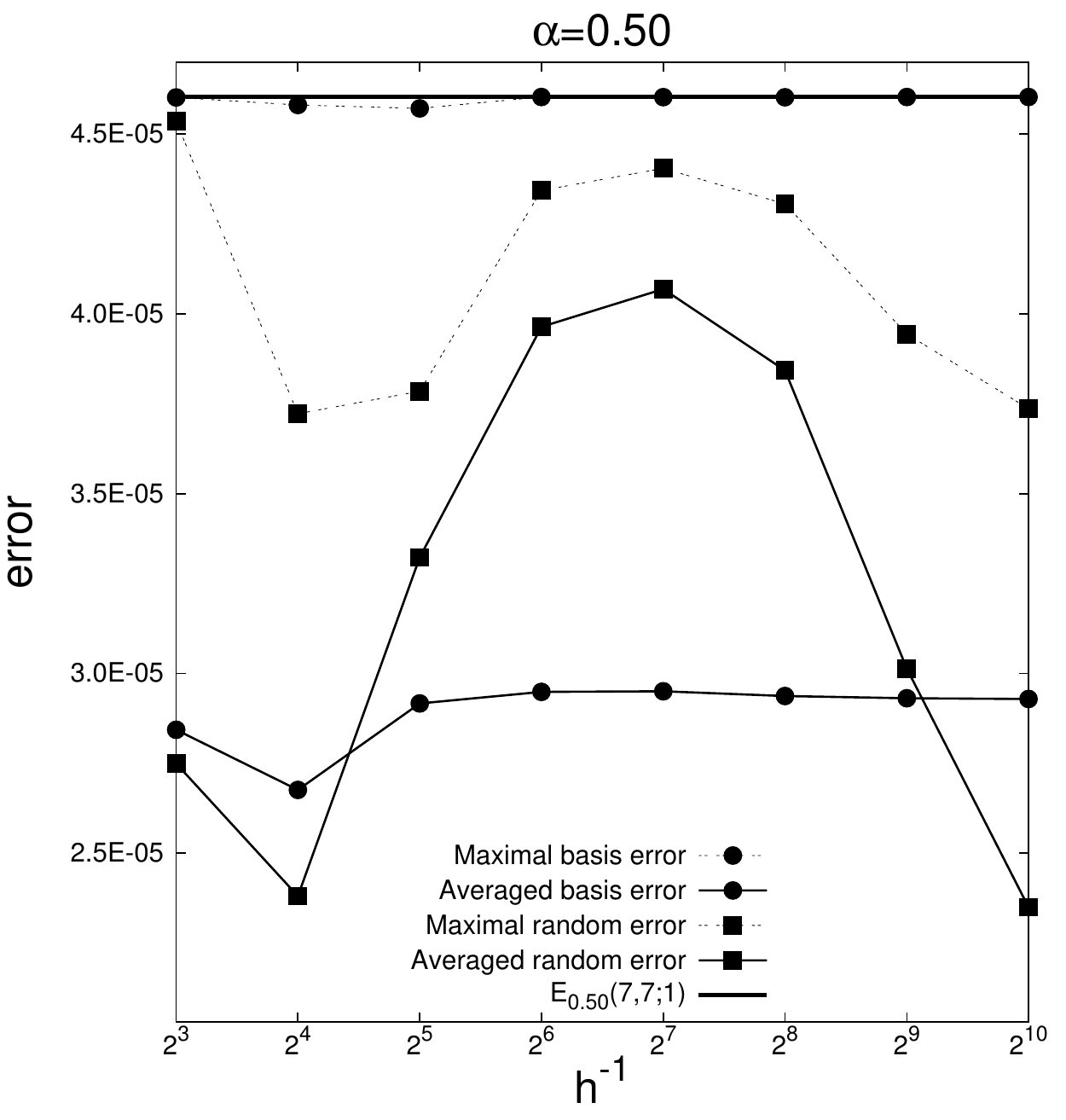} 
\includegraphics[width=0.48\textwidth]{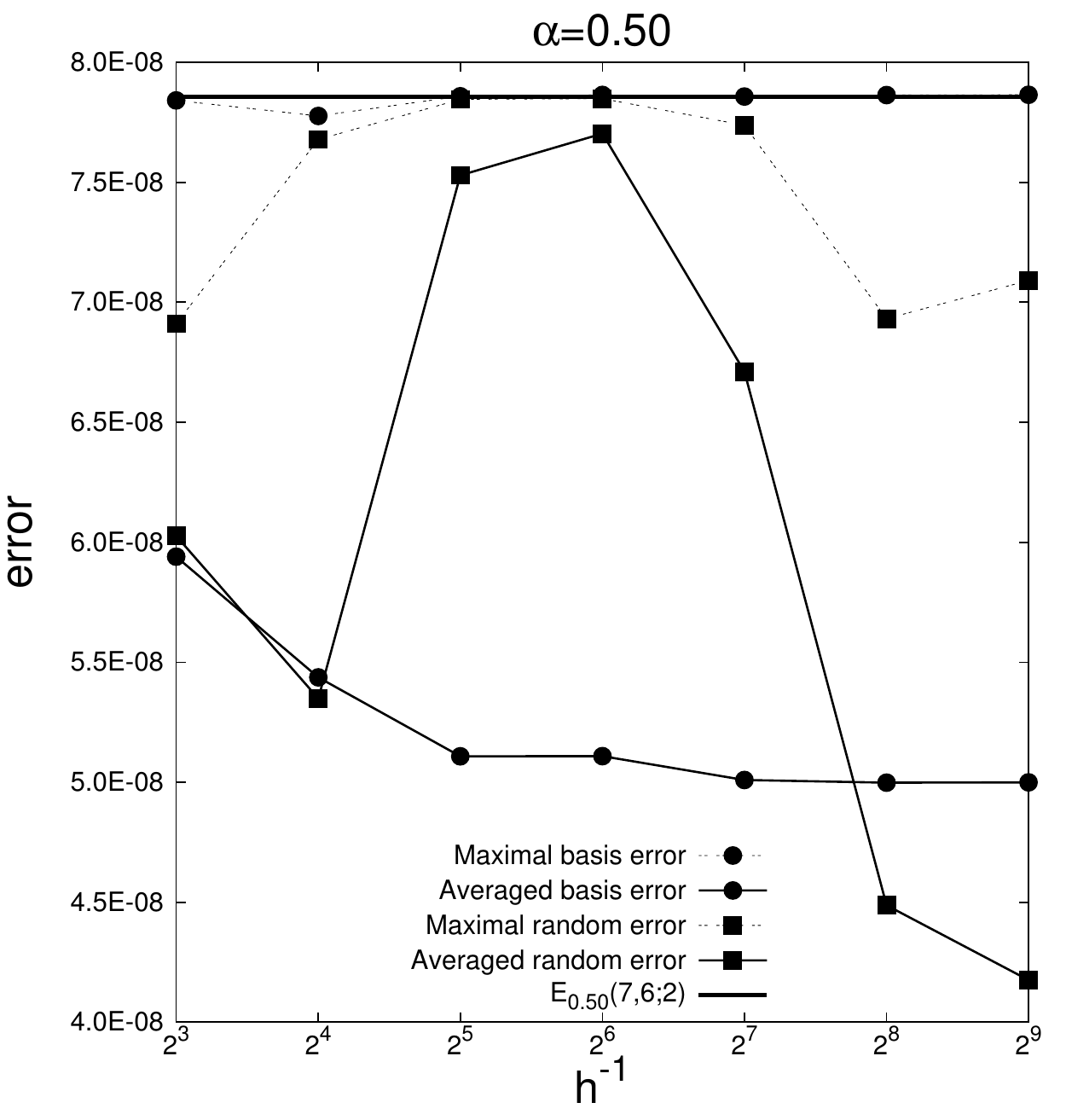} \\ 
\includegraphics[width=0.48\textwidth]{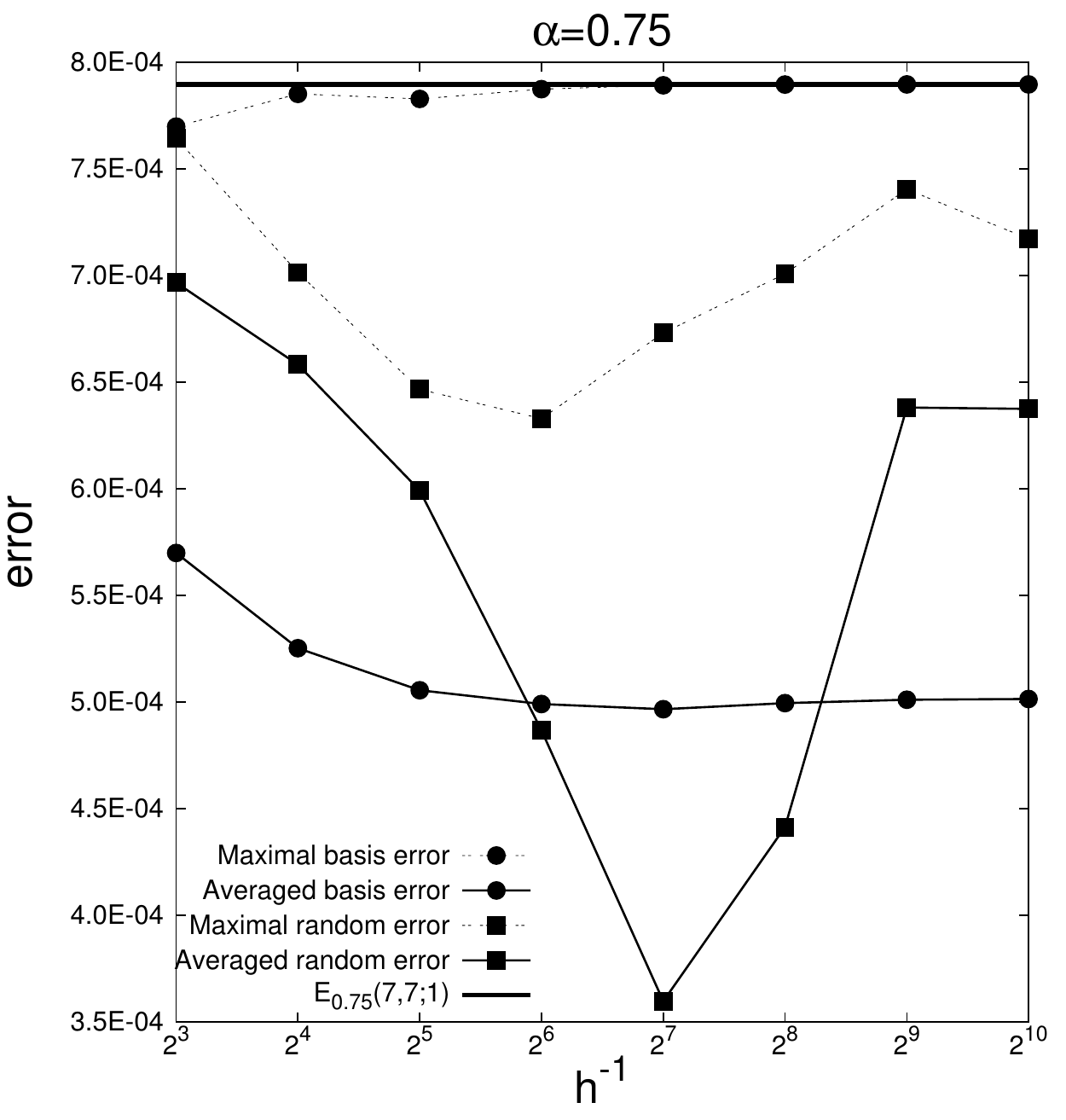} 
\includegraphics[width=0.48\textwidth]{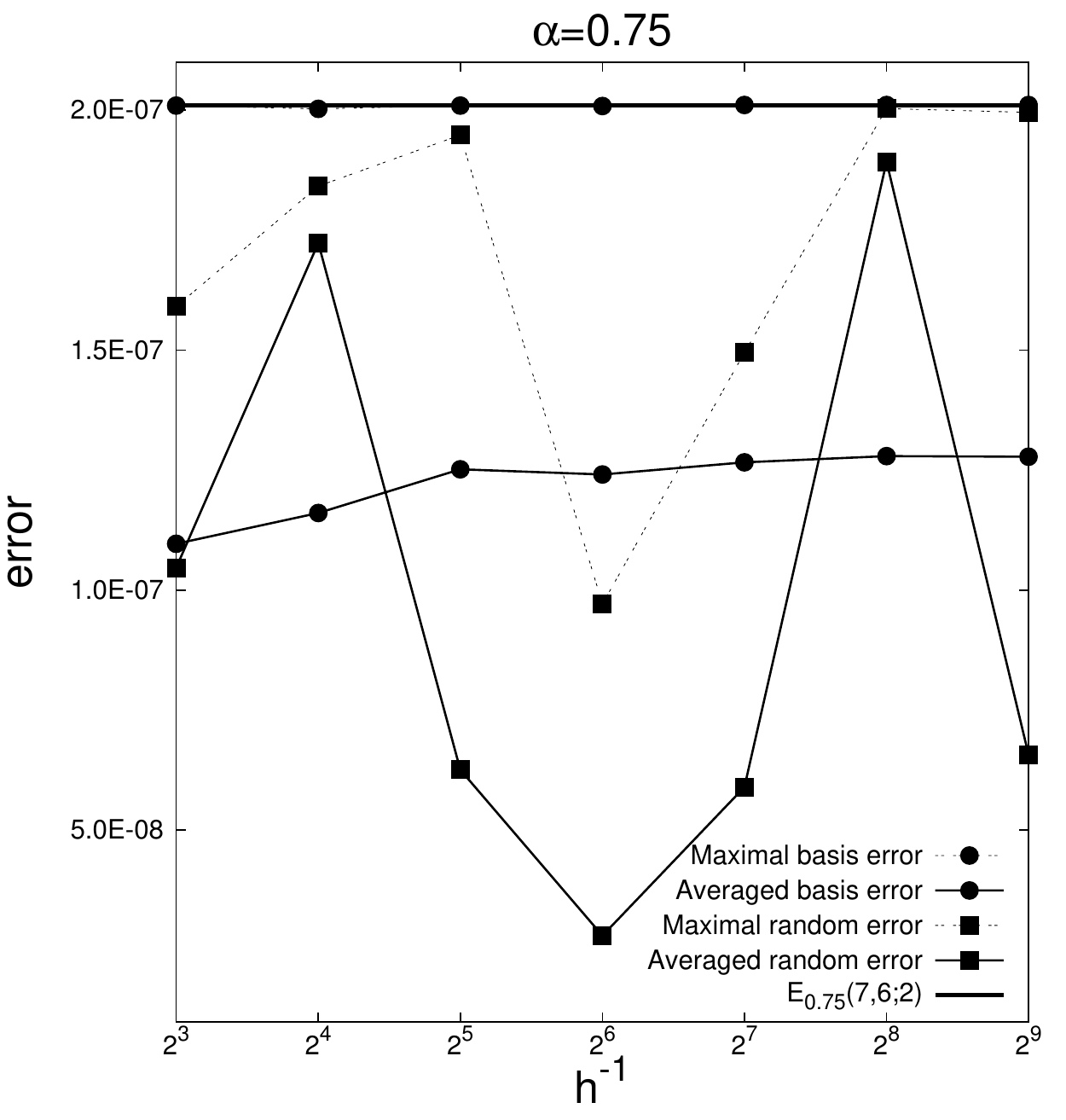} 
\end{center}
\caption{1D numerical validation of Lemma~\ref{l:BURAerror} for $m=7$. Left: $(7,7;1)$. Right: $(7,6;2)$. 
Error is measured as indicated in \eqref{bound}}\label{fig:error_m7}.
\end{figure}

Numerical results for $m=7$, $\beta=1,2$ are summarized in 
Fig.~\ref{fig:error_m7}. As suggested by \eqref{bound}, we measure 
the relative  error
$ 
 \| \bfu_r- \bfu\|_{\calA} / \|\bff\|_{\calA^{-1}}
$ 
for $\beta=1$ 
and the relative  error
$ 
\| \bfu_r- \bfu\|_{\calA} / \|\bff\|_{\calA^{-3}}
$ 
for $\beta=2$.
We use as input the coefficient vector of $\bff$ with respect to the basis 
$\{\bPsi_i\}_{i=1}^N$, so the derivation of the exact solution $\bfu$ as well as the computation 
of the norms $\|\bff\|_{\calA^{-1}}$, respectively $\|\bff\|_{\calA^{-3}}$, is straightforward. In order to 
compute the approximated solution $\bfu_r$, we first generate the coefficient vector of $\bff$ with respect to the 
standard basis $\{\delta_{ik}\}_{i,k=1}^N$ and then solve exactly the 
corresponding $m+\beta$
tridiagonal linear systems that originate from the fractional decomposition of 
$t^{-\beta}r^\beta_\alpha(t)$. Randomness is with respect to the entries of the input coefficient vector.

We study four different error quantities: the maximal error over the eigenvectors $\{\bPsi_i\}_{i=1}^N$, which 
coincides with the true estimate of the approximation error; the maximal error over a randomized set of 1000 $\bff$'s, 
which is the numerical approach for estimating the former; the averaged error over the eigenvectors; and the 
averaged error over the random right-hand-side set. The last two quantities provide information about the general 
behavior of the error and its expectation value. The main observation is that the errors, related to the 
eigenvectors set behave quite stably 
with respect to the size  of $\calA$, unlike the errors related to 
random vector input. Such ``dimension-invariance'' of the results from the first class is due to the almost uniform 
distribution of the eigenvalues $\{\Lambda_i\}_{i=1}^N$ of $\calA$ along the interval $[0,1]$ and that for every 
$\beta$-BURA function the endpoints 0 and 1 are extreme points for the residual 
$r^\beta_\alpha(t)-t^{\beta-\alpha}$ (i.e., $\{0,1\}\subset\{\eta_i\}_1^{m+k+2}$). As $N\to\infty$, 
we have $\Lambda_1\to0$, $\Lambda_N\to1$ and we observe that all the maximal norm ratio errors for 
eigenvectors input 
tend to the corresponding univariate error $E_\alpha(m,k;\beta)$. 
The $\beta$-BURA functions oscillate mainly close to $0$ and are stable close to 1, making rapid convergence
$|r^\beta_\alpha(\Lambda_N)-\Lambda_N^{\beta-\alpha}|\to E_\alpha(m,k;\beta)$. Therefore, the placement 
of the remaining spectrum $\{\Lambda_i\}_{i=1}^{N-1}$ of $\calA$ with respect to $\{\eta_i\}$ is not significant for 
this quantity. On the other hand, it is practically impossible to generate (a rescaled version 
of) $\bPsi_N$ at random, so the randomized errors heavily depend on the placement of the whole spectrum of $\calA$ with 
respect to the extreme points of the $\beta$-BURA function.  As a result, both maximal and averaged 
random errors can be anywhere in the interval between the minimal and maximal error of the eigenvectors. 
We generated various random  sets of different size 
(e.g., $10^3,10^4$) and checked that for a fixed $N$ the two errors behave stably with 
respect to the choice of randomness. This allows us to conclude that the ``dimension-instability'' phenomenon is indeed 
fully due to the specifics of the spatial distribution of the spectrum of $\calA$. 

 
\subsection{Multi-step 1-BURA approximation for $\alpha=\{0.5,0.75\}$}\label{sec42}

\begin{figure}[htp]
\begin{center}
\includegraphics[width=0.8\textwidth]{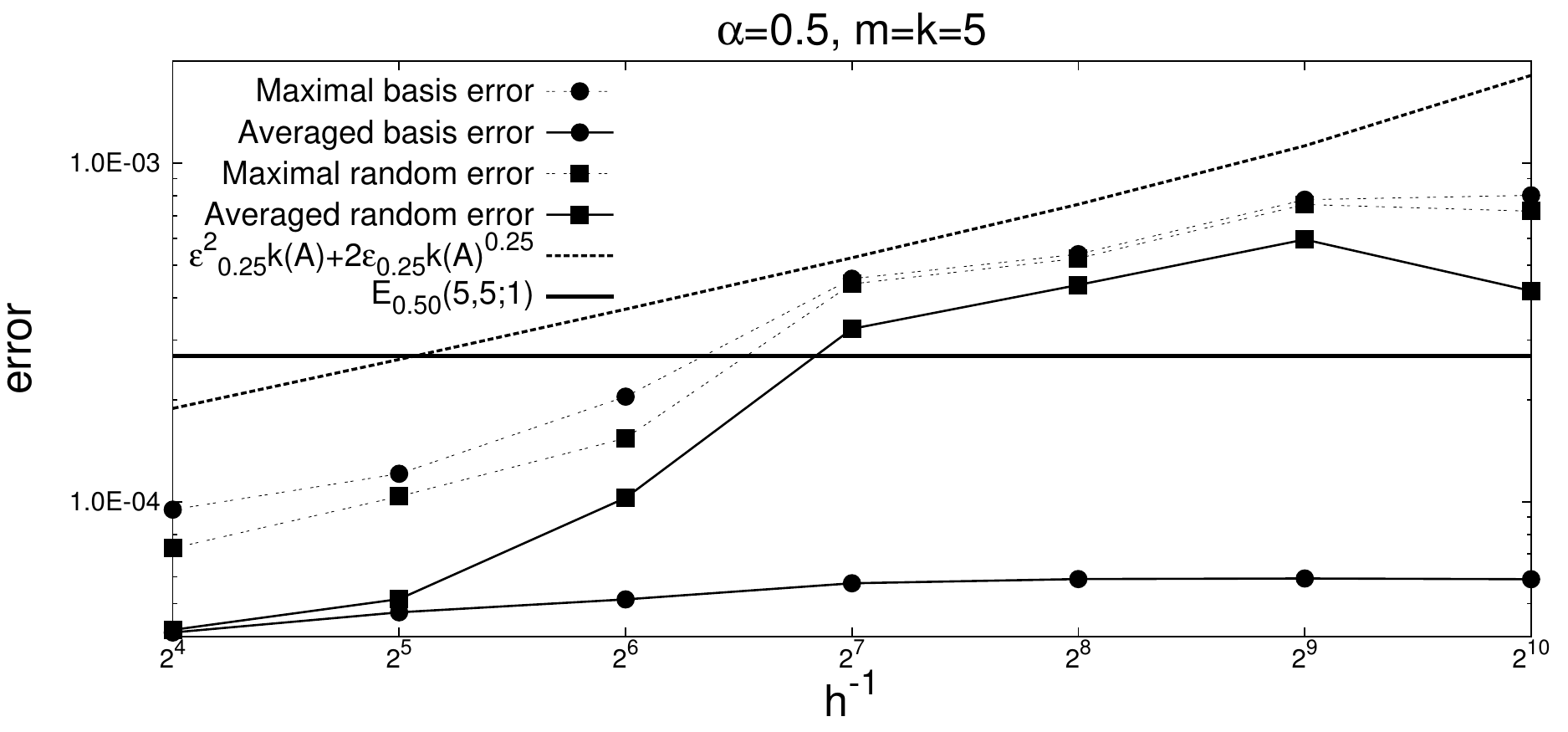}\\ 
\includegraphics[width=0.8\textwidth]{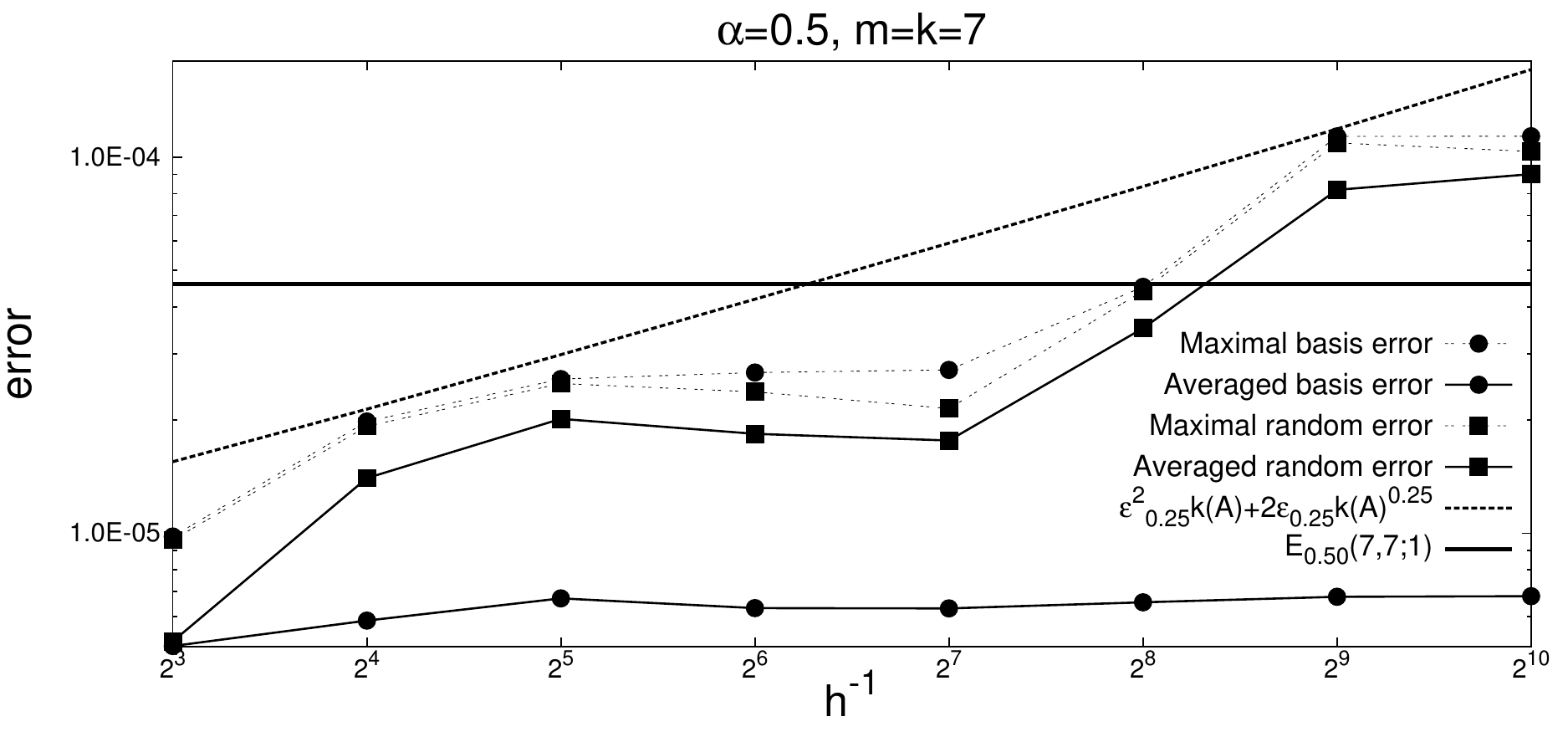} \\ 
\includegraphics[width=0.8\textwidth]{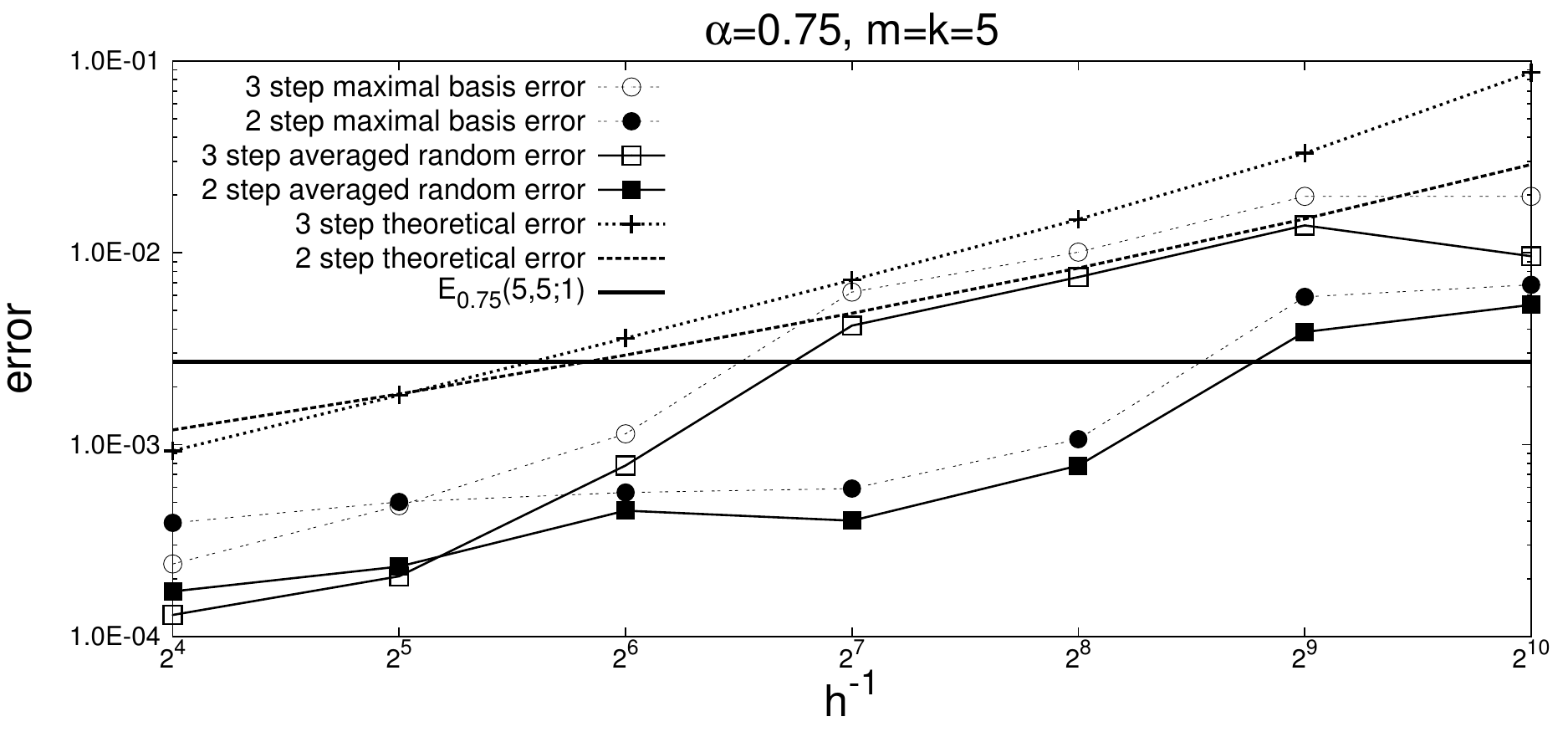} \\ 
\includegraphics[width=0.8\textwidth]{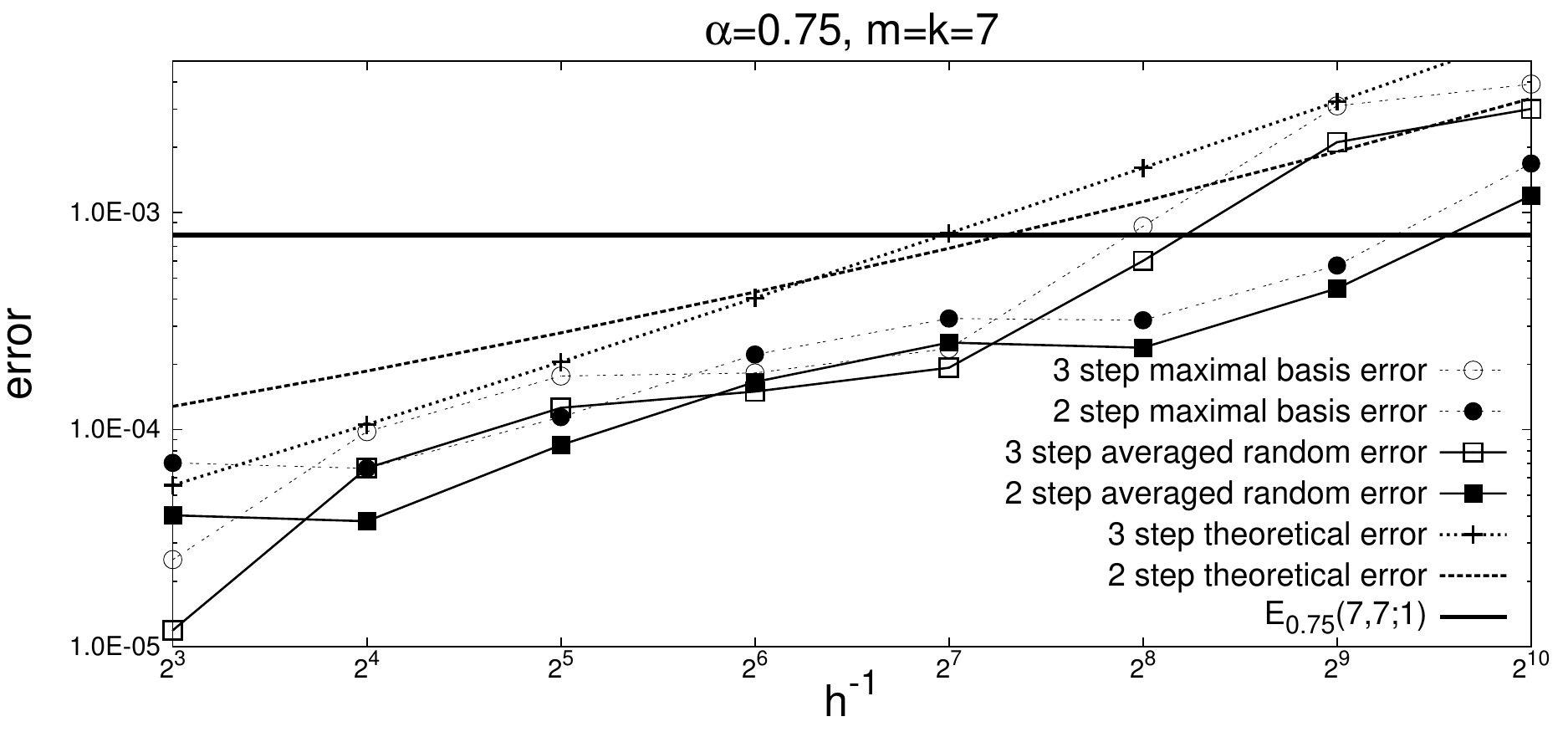} 
\end{center}
\vspace*{-0.5cm}
\caption{1D numerical error analysis for the multi-step case. 
The relative errors $\|\bfu_r-\bfu\|_{\calA}/\|\bff\|_{\calA^{-1}}$ are plotted.}
\label{fig:multi_step_error}
\end{figure}

The second series of numerical experiments are devoted to the multi-step generalization of the method.
The presented
numerical experiments for $\calA=tridiag (-0.25,0.5,-0.25)$ as in Section~\ref{sec41}, $k=\{5,7\}$ and 
$\alpha=\{0.5,0.75\}$ confirm the theoretical analysis in Section~\ref{sec:multiBURA}.  The related results 
are summarized in Fig.~\ref{fig:multi_step_error}. When 
$\alpha=0.5$, we study the two step procedure based on $\alpha_1=\alpha_2=0.25$. When $\alpha=0.75$, we investigate 
both the two step procedure with $(\alpha_1,\alpha_2)=(0.5,0.25)$ and the three step procedure based on 
$\alpha_1=\alpha_2=\alpha_3=0.25$. In the latter case, it is straightforward to derive the three-step analogous formula 
to \eqref{eq:general 2-step error}, which in the particular setup implies
\begin{equation}\label{eq:3-step error}
E_{0.25\,0.25\,0.25}(k,k;1)=E_{0.25}^3\mathrm{k}^2(\calA)+3E_{0.25}^2\mathrm{k}^{5/4}(\calA) 
+3E_{0.25}\mathrm{k}^{1/2}(\calA).
\end{equation}
Again, as in \eqref{eq:general 2-step error}, we use the short notation $E_{0.25}$ for $E_{0.25}(k,k;1)$.

We set $h^{-1}=N+1=2^i$, $i\in\{3,4,\dots,10\}$ and plot various errors. Namely, for $\alpha=0.5$ those are: the 
theoretically estimated two-step error \eqref{eq:general 2-step error}, the true two-step error estimate that coincides 
with the maximal error over the eigenvectors $\{\bPsi_i\}_{i=1}^N$, the averaged two-step error over the eigenvectors, 
the maximal and averaged two-step errors over a thousand randomly generated vectors, and the one-step error 
$E_{0.50}(k,k;1)$. For $\alpha=0.75$ we plot the theoretically estimated two and three step errors \eqref{eq:general 
2-step error}-\eqref{eq:3-step error}, the true two and three step error estimates, the corresponding averaged errors 
of random right-hand-side data, and the one-step error $E_{0.75}(k,k;1)$. 

From the plots in Fig.~\ref{fig:multi_step_error} we observe that when the $\alpha_i$'s coincide the true 
multi-step error estimate reaches the theoretical bound for particular sizes of $\calA$. This happens when $\Lambda_1$ 
hits an extreme point of 
$r^1_{0.25}$, i.e., $|r^1_{0.25}(\Lambda_1)-\Lambda^{3/4}_1|\approx E_{0.25}$, as 
for $h=2^{-7}$, $k=5$ and $h=\{2^{-4},2^{-9}\}$, $k=7$. When $\alpha_1\neq\alpha_2$ 
we confirm that the theoretical 
bound $E_{0.25\;0.50}$ is an overestimation of the true maximal error, since the sets of internal extreme points for 
$r^1_{0.25}$ and $r^1_{0.50}$ are disjoint and it is not possible for $\Lambda_1$ to simultaneously hit 
both. Note that $\Lambda_1$ tends to zero as $N\to\infty$ but it never reaches zero  and the heavy oscillations of the 
residual in this area do not allow the maximal basis error to reach $E_{0.25\;0.50}$ even for $\calA$ of size 
$1023\times 1023$.  

Unlike the first experimental setup, here we witness similar behavior among the theoretical error, the maximal 
random error, and the averaged random error, meaning that the measured quantity is stable and does not heavily 
depend on $\bff$. This is due to the specifics of the multi-step procedure and the existence of pole at zero for the 
product rational approximation. Hence, whenever $\langle \bff,\bPsi_1\rangle\neq0$ this component dominates the overall 
error value. Because of that, the averaged basis error do not provide 
reliable information about the error in the general (worst) case,
since the eigenvectors of $\calA$ are mutually orthogonal. This error remains  substantially
below the one-step error $E_{0.50}$ in all conducted experiments. 

As $k$ increases, the two-step error remains better than the one-step error for a larger set of matrix sizes. For 
$\alpha=0.5$ and $k=5$ the two-step error overpasses $E_{0.50}$ for $h=2^{-7}$, while for $k=7$ this happens for 
$h=2^{-9}$. For $\alpha=0.75$ the benefits of the two-step process are bigger, as the two-step error remains in 
vicinity of $E_{0.75}$ even for $h=2^{-10}$. However, as in the theoretical analysis, we clearly see the 
dimension-dependence of the multi-step errors. Nevertheless, with respect to controlling the ratio
$\|\bfu_r-\bfu\|_{\calA}/\|\bff\|_{\calA^{-1}}$ in the cases when $r^1_\alpha$ cannot be numerically computed, the 
proposed two-step procedure seems a better asymptotic choice than $r^2_\alpha$, since
$$\frac{\|r^2_\alpha(\calA)\calA^{-2}\bff-\calA^{-\alpha}\bff\|_{\calA}}{\|\bff\|_{\calA^{-1}}}\le 
\frac{\|r^2_\alpha(\calA)\calA^{-2}\bff-\calA^{-\alpha}\bff\|_{\calA}}{\|\bff\|_{\calA^{-3}}}\mathrm{k}(\calA)\le 
E_\alpha(k,k;2)\mathrm{k}(\calA)$$
and, comparing to \eqref{eq:general 2-step error}, we experimentally observe that 
$E_{\alpha_1}(k,k;1)E_{\alpha_2}(k,k;1)<E_\alpha(k,k;2)$ if $\alpha_1+\alpha_2=\alpha$.

\subsection{Comparison BURA and the method of Bonito and Pasciak, \cite{BP15}.}
In this Subsection we experimentally compare the numerical efficiency of the BURA solver with the one, 
developed in \cite{BP15} on a test example taken from their paper \cite{BP15}.  We consider the 
problem
\begin{equation}\label{eq:2D Laplace}
(-\Delta)^\alpha u =   f,\quad  u_{\partial\Omega}=0, \qquad\Omega=[0,1]\times [0,1]
\end{equation}
and its finite element approximation on a uniform rectangular grid with mesh-size $h=1/(N+1)$. This leads to a 
5-point stencil approximation $\calAt$ of $-\Delta$  of the form
$$ 
\calAt=h^{-2}tridiag\left(-I_N,\calAt_{i,i},-I_N\right),\quad \calAt_{i,i}=tridiag (-1,4,-1),\qquad \forall i=1,\dots,N.
$$
Then we have the algebraic problem \eqref{algebraic} with  $\calA=h^{2}\calAt/8$, where $\calA$ is 
an $N^2\times N^2$ SPD matrix with spectrum in the interval $(0,1]$ and $\bff $ is the vector of the 
values of $f(x,y)$ at the grid points scaled by $h^2/8$ (using lexicographical ordering).

For the right-hand-side $  f$ we use the checkerboard function on 
$\Omega\setminus\partial\Omega=(0,1)\times(0,1)$
\begin{equation}\label{eq:checkerboard}
  f(x,y)=\left\{\begin{array}{rl} 1,& \text{if } (x-0.5)(y-0.5)>0,\\ -1,& \text{otherwise}.\end{array}\right. 
\end{equation}

In \cite[Remark 3.1]{BP15} it is observed that in order to balance all the three exponential terms in their 
error estimate, the optimal quadrature approximate of $\calAt^{-\alpha}\bfftil$ is 
\begin{equation*}\label{eq:BonitoPasciak}
\bfu_{Q}=\frac{2k'\sin(\pi\alpha)}{\pi}\sum_{\ell=-m}^M 
e^{2(\alpha-1)\ell k'}\left(e^{-2\ell k'} \calIt +\calAt\right)^{-1} \bff, \quad m=\left\lceil\frac{\pi^2}{4\alpha k'^2}\right\rceil, \; M=\left\lceil\frac{\pi^2}{4(1-\alpha) k'^2}\right\rceil, 
\end{equation*}
where $k'>0$ is a free parameter. 
The number of linear systems to be solved in order to 
compute $\bfu_Q$ can be trivially estimated via
$$\#\text{ systems} = M+m+1\ge k_Q+1, \qquad k_Q:=\frac{\pi^2}{4\alpha(1-\alpha)k'^2}.$$
For the $(k,k)$ 1-BURA approximation $\bfu_r$  we need to solve $k+1$ linear systems. 
Note that in both approaches all systems correspond to positive diagonal shifts of $\calAt$, 
thus they possess similar computational complexity. Therefore, in order to perform 
comparison analysis on the numerical efficiency of the two solvers we need to take $k_Q\sim k$.

As a reference solution $\bfu_{\text{ref}}$ for \eqref{eq:2D Laplace} we consider the solution  
$\bfu_Q$ for $h=2^{-10}$ with $k'=1/3$, which guarantees $\mathrm{O}(10^{-7})$ error, 
see \cite[Table 3]{BP15}.

In the numerical experiments we use the following parameters: $h=2^{-10}\approx 10^{-3}$ and 
$k=\{9,8,7\}$ for $\alpha=\{0.25,0.5,0.75\}$, respectively. 
The corresponding 1-BURA-approximations of $\calAt^{-\alpha}\bfftil$ are 
illustrated on Fig.~\ref{fig:2D}. Furthermore, we restrict our analysis to integer $k_Q$. 
Note that both $k_Q$ and the positive shifts $e^{-2\ell k'}$ 
are continuous functions of $k'$, meaning that there is a whole interval of values $k'$ leading to the same number of 
systems to be solved for $\bfu_Q$ and each $k'$ gives rise to different shift parameters, thus different quadrature rule, 
respectively approximation error. For us it is not clear which choice of $k'$ will 
lead to the smallest $\|\bfu -\bfu_Q\|_2/\|  \bfftil \|_2$.

\begin{figure}
\begin{tabular}{ccc}
$\alpha=0.25$, $k=9$ & $\alpha=0.5$  $k=8$ & $\alpha=0.75$, $k=7$\\
\includegraphics[width=0.33\textwidth]{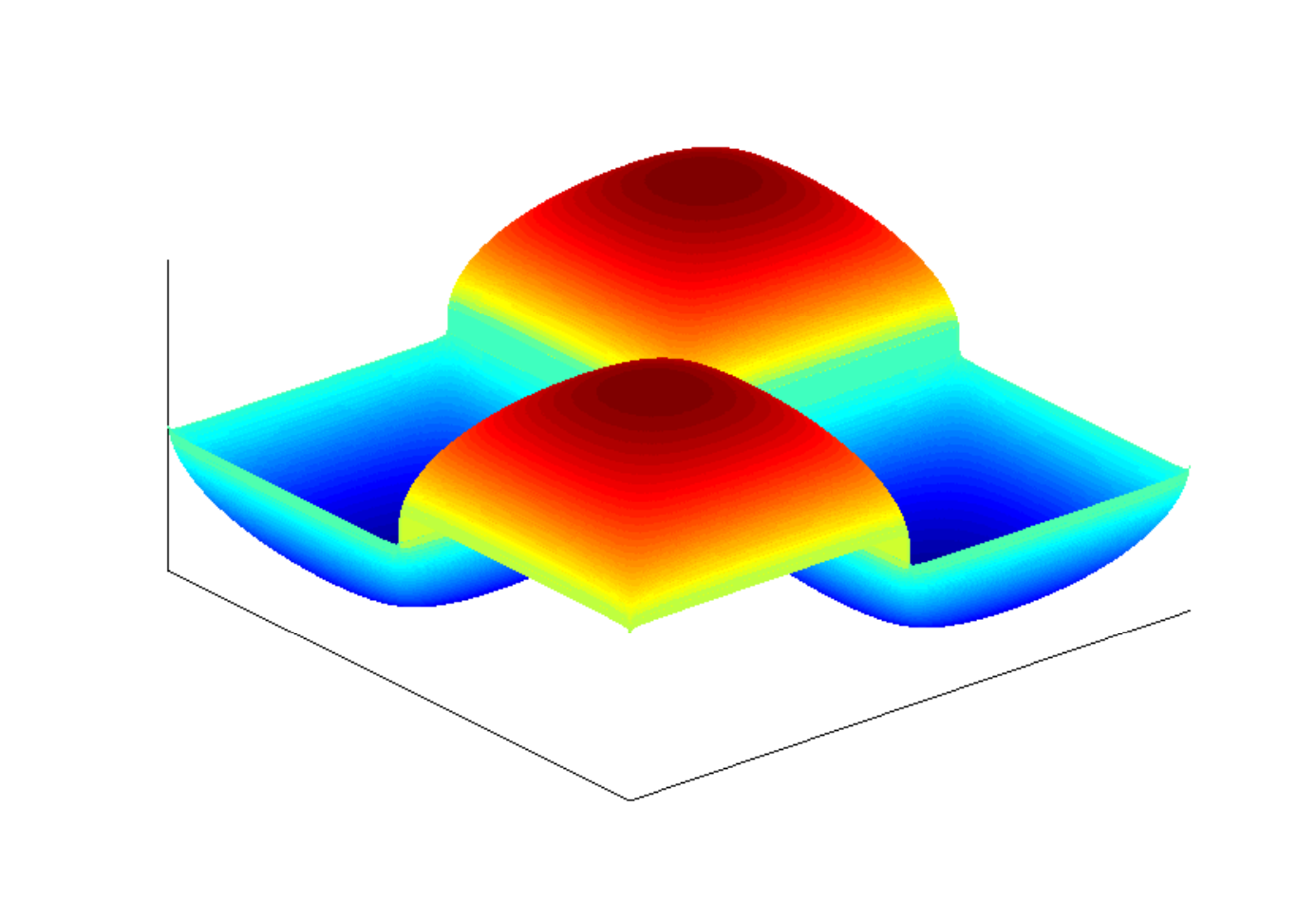} & 
\includegraphics[width=0.33\textwidth]{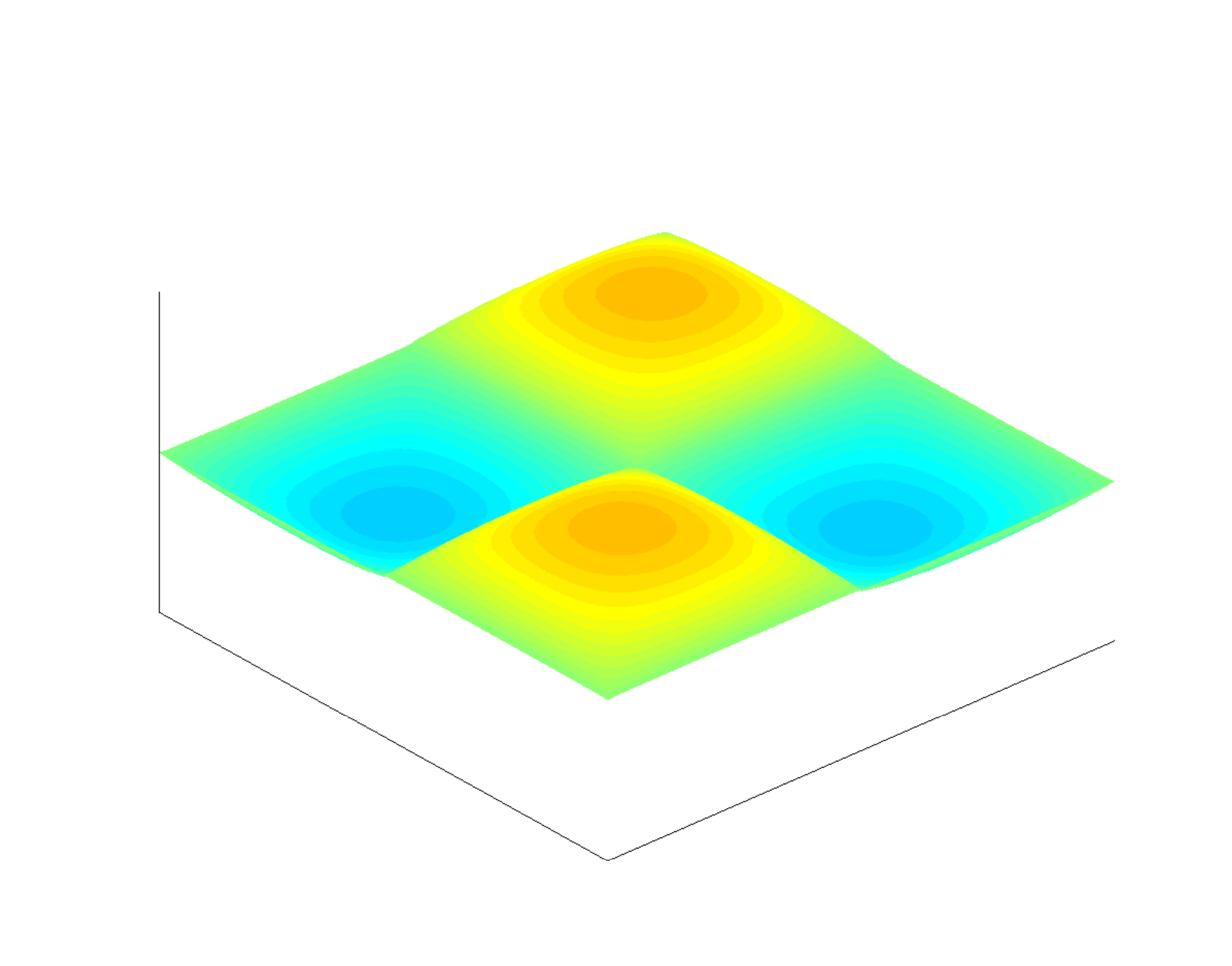} & 
\includegraphics[width=0.33\textwidth]{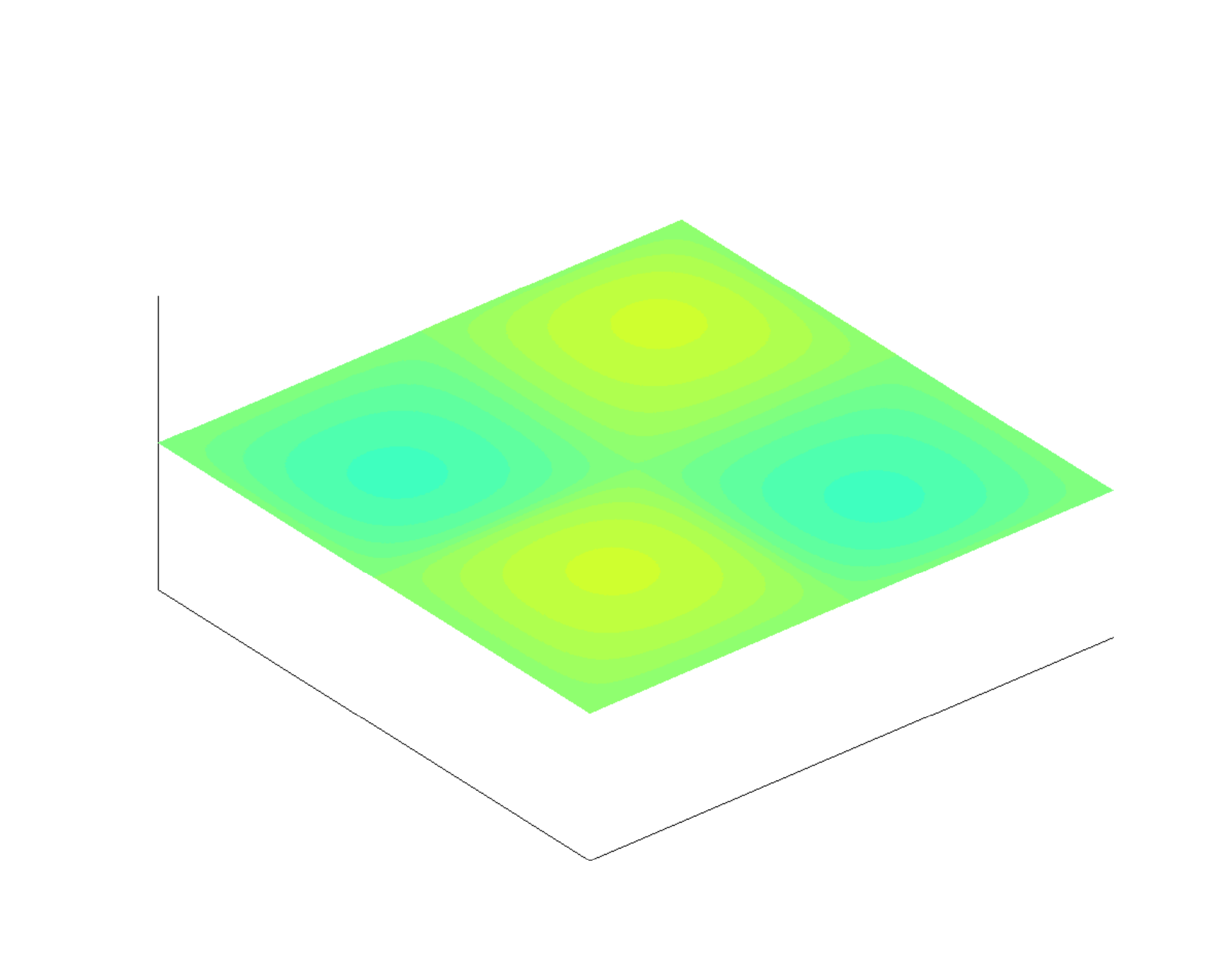} 
\end{tabular}
\caption{1-BURA-approximation of $\calAt^{-\alpha} \bfftil$ for $h=2^{-10}$}\label{fig:2D}
\end{figure}

Straighforward computations for the considered three choices of $\alpha$ and $\bfu_Q$ imply
$$\#\text{ systems} = \lceil (1-\alpha)k_Q\rceil +  \lceil \alpha k_Q\rceil + 1=\left\{
\begin{array}{ll}
k_Q+1+\lceil k_Q\pmod 4\rceil,& \alpha=\{0.25,0.75\};\\
k_Q+1+\lceil k_Q\pmod 2\rceil,& \alpha=0.50.
\end{array}
\right.
$$
Therefore, for $\alpha=0.25$ $\bfu_Q$ can never consist of $k+1=10$ summands, like $\bfu_r$; for $\alpha=0.5$ both $k_Q=\{7,8\}$ lead to $k+1=9$ linear systems for $\bfu_Q$; for $\alpha=0.75$ only $k_Q=6$ leads to $k+1=8$ linear systems for $\bfu_Q$.

\begin{table}
\centering
\caption{Relative $\ell_2$ errors for 2-D fractional diffusion. Top: $(k,k)$ 1-BURA. Bottom: The \cite{BP15} solver.}\label{table:2D-error}
 \begin{tabular}{|c|ccc|}\hline
   & $\alpha=0.25$ & $\alpha=0.5$ & $\alpha=0.75$\\\hline
  $\|\bfu_{{\text{ref}}}-\bfu_r\|_2/\|\bfftil\|_2$ & $1.756$E-4 & $3.833$E-4 & $4.180$E-4\\
  $\|\bfu_{{\text{ref}}}-\bfu_Q\|_2/\|\bfftil\|_2$ & $9.375$E-3 & $2.830$E-3 & $1.088$E-3\\\hline 
 \end{tabular}
\end{table}

Relative $\ell_2$ errors are documented in Table~\ref{table:2D-error}. To get the
approximate solution $\bfu_Q$ we consider $k_Q=\{9,7,6\}$, when $\alpha=\{0.25,0.5,0.75\}$. 
We observe that in all three cases the relative error of the BURA approximate solution $\bfu_r$  
is smaller than the error of $\bfu_Q$. Furthermore, for each $\alpha$ we keep on increasing 
$k_Q$ by one until the corresponding relative $\ell_2$ error of $\bfu_Q$ becomes smaller than the 
approximate solution $\bfu_r$ obtained for $(k,k)$ 1-BURA method. For $\alpha=0.25$, 
we get $k_Q=38$ to be the smallest such integer, meaning that we need to solve 4 times 
more linear systems ($40$ compared to $10$) in order to beat the numerical accuracy of BURA. 
For $\alpha=0.5$, we get $k_Q=20$, thus we need to solve $21$ linear systems if we 
apply \cite{BP15} instead of $9$, when we apply the BURA solver. Finally, for $\alpha=0.75$, 
we get $k_Q=13$ and $15$ linear systems to be solved, compared to $8$ in the BURA case. 
The relative errors as functions of the number of linear systems in $\bfu_Q$ are 
presented on Figure~\ref{fig:ErrorPlots}.

\begin{figure}
\begin{tabular}{ccc}
$\alpha=0.25$, $k=9$ & $\alpha=0.5$, $k=8$ & $\alpha=0.75$, $k=7$\\
\includegraphics[width=0.33\textwidth]{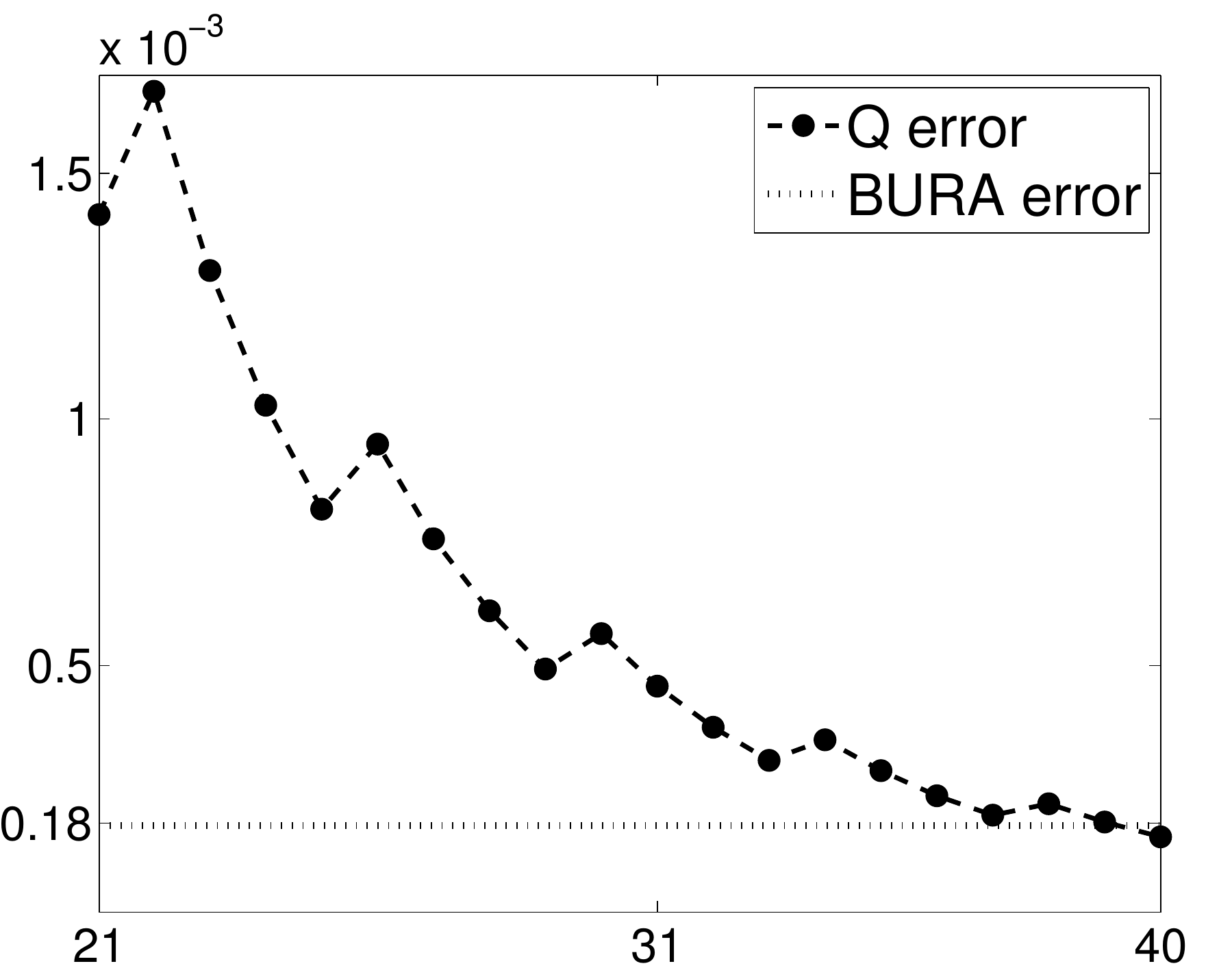} & 
\includegraphics[width=0.33\textwidth]{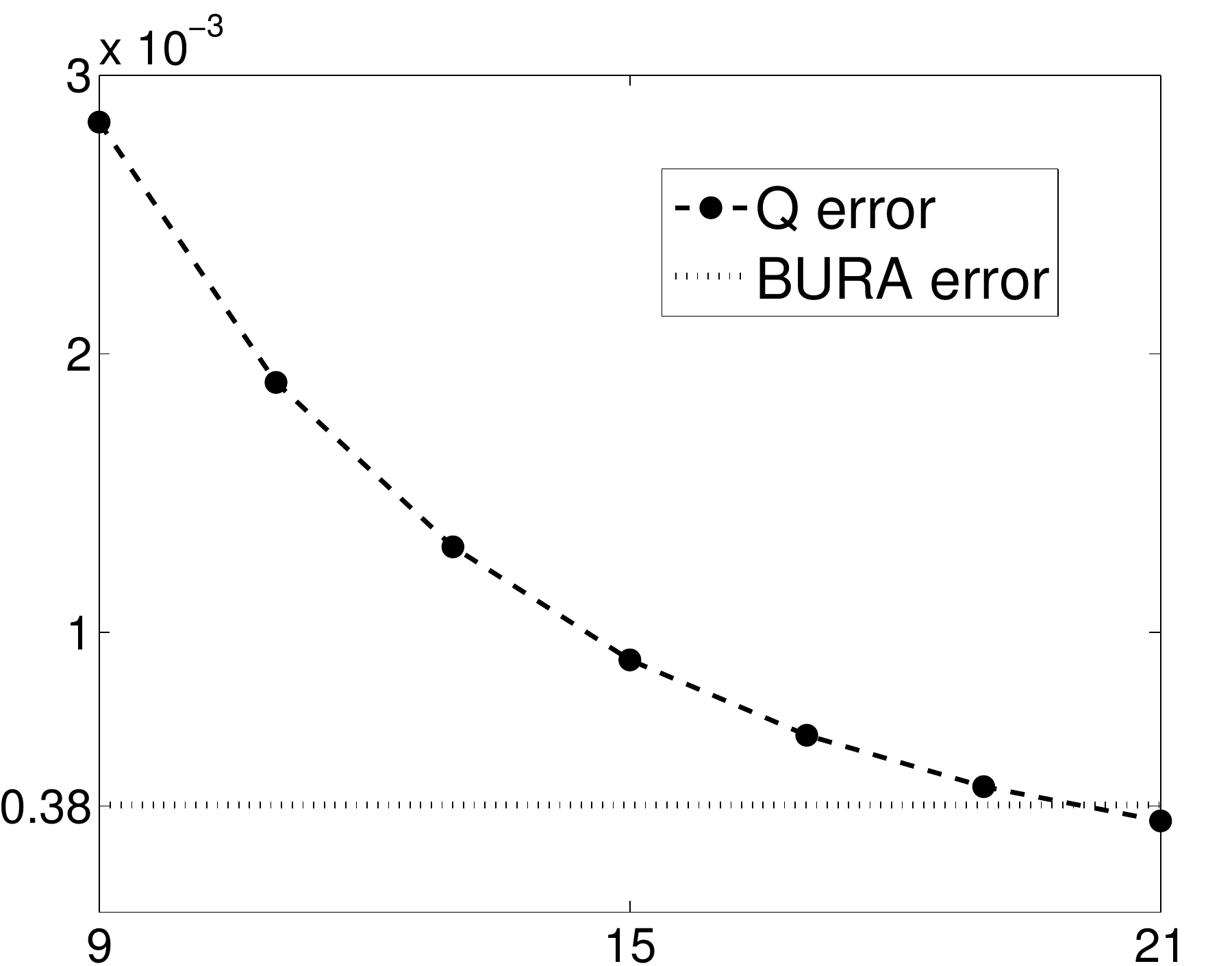} & 
\includegraphics[width=0.33\textwidth]{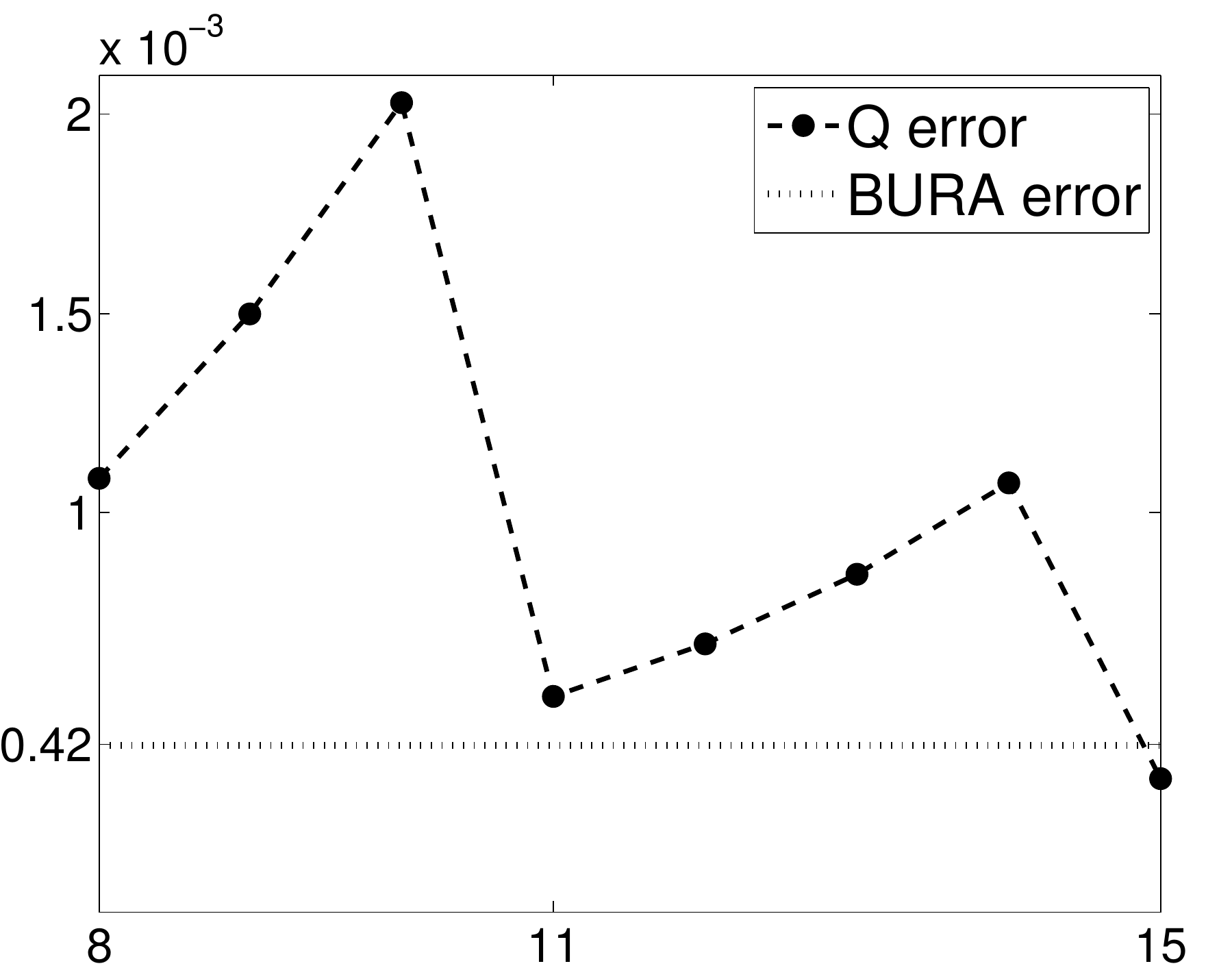} 
\end{tabular}
\caption{Relative $\ell_2$ errors for $\bfu_Q$ as functions on the number of solved linear systems.}\label{fig:ErrorPlots}.
\end{figure}
\subsection{1D and 3D numerical tests with approximate solving  of $\calA \bfu =\bff$ by PCG method}\label{sec43}

In higher spatial dimensions, when the domain $\Omega$ in \eqref{eqn:weak} is a subset of $\R^d$, $d>1$,
we cannot compute 
the exact solution $\bfu=\calA^{-\alpha}\bff$. In general, we don't have explicitly the 
eigenvalues and eigenvectors of $\calA$. Thus, in the analysis of the numerical tests, we cannot apply 
error estimates of the form \eqref{bound}.
In order to numerically validate our  theoretical estimates
we use the two-step procedure from Section~\ref{sec:multiBURA} through the following obvious identity 
\begin{equation}\label{eq:f_approx}
\bff =\calA\left(\calA^{-(1-\alpha)}\big(\calA^{-\alpha}\bff\big)\right), 
\end{equation}
that holds true for an arbitrary vector $\bff\in\R^N$. In \eqref{eq:general 2-step error} we 
argued that taking a  product of two rational functions as an approximation of $t^{\beta-\alpha}$ with $\beta=1$ the 
corresponding 
approximation error depends on the condition number of $\calA$. For the multivariate validation of 
Lemma~\ref{l:BURAerror} such dimension-dependence is not acceptable, so we choose $\beta=2$ here. 
In particular, we treat 
$r^1_{1-\alpha}r^1_\alpha$ as a 2-uniform rational approximation 
(this is not BURA, since the approximation error is not optimal) for the function 
$t^{2-1}=t$ on the unit interval $(0,1]$. Applying \eqref{bound} with $\gamma=2$ we deduce
\begin{equation}\label{eq:product}
\|r^1_{1-\alpha}(\calA)r^1_{\alpha}(\calA)\calA^{-2}\bff-\calA^{-1}\bff\|_{\calA^2}\le 
E_{1-\alpha,\alpha}(k,k;2)\|\bff\|_{\calA^{-2}}.
\end{equation}
To avoid additional numerical inaccuracies, it is more convenient from a computational point of view to introduce the 
approximation vector $\bff_r$ of $\bff$  in \eqref{eq:f_approx}, i.e., 
$$
\bff_r 
=\calA \Big (r^1_{1-\alpha}(\calA)r^1_{\alpha}(\calA)\calA^{-2}\bff \Big ):= \calA\bfu_r.
$$
Then we can rewrite the estimate \eqref{eq:product} in the form 
\begin{equation}\label{eq:3D error}
 {\|\bff_r-\bff\|}/{\|\calA^{-1}\bff\|}\le E_{1-\alpha,\alpha}(k,k;2).
\end{equation}
We estimate the two-step error $E_{1-\alpha,\alpha}(k,k;2)$ with the help of \eqref{eq:residual}, as in 
\eqref{eq:multi-step error}. The function $r^1_{1-\alpha}r^1_\alpha$ has no poles in $[0,1]$, thus we need not
restrict ourselves to the spectrum of $\calA$:
\begin{equation*}
\begin{split}
E_{1-\alpha,\alpha}(k,k;2)&=\max_{t\in[0,1]}|r^1_{1-\alpha}(t)r^1_\alpha(t)-t|=\max_{t\in[0,1]}
|t^\alpha\varepsilon_\alpha(t)+t^{1-\alpha}\varepsilon_{1-\alpha}(t)+\varepsilon_\alpha(t)\varepsilon_{1-\alpha}(t)|\\
&\le E_{1-\alpha}(k,k;1) + E_{\alpha}(k,k;1) + E_{1-\alpha}(k,k;1) E_{\alpha}(k,k;1).
\end{split}
\end{equation*}
In practice, however, we observe that the residuals $\varepsilon_\alpha$ and $\varepsilon_{1-\alpha}$ 
are negative  and 
monotonically decreasing in $[0.8,1]$. Due to this sign pattern and the following 
equalities  $\varepsilon_\alpha(1)=-E_\alpha(k,k;1)$,  $\varepsilon_{1-\alpha}(1)=-E_{1-\alpha}(k,k;1)$, 
we can improve the two-step error estimate and conclude that:
\begin{equation}\label{eq:3D error final}
 {\|\bff_r-\bff\|}/{\|\calA^{-1}\bff\|}\le E_{1-\alpha} + E_{\alpha} - E_{1-\alpha} E_{\alpha} .
\end{equation}

The last estimate has been numerically confirmed as sharp for $k=\{5,7\}$. 
Inspecting closely the above proof, we realize that 
the two-step residual is of order $E_{1-\alpha} E_{\alpha}$ or lower around zero. Furthermore, unlike the 
equioscillation BURA setting for the one-step process, the two-step residual reaches its maximum in absolute value only 
at $t=1$ and the amplitudes of its other oscillations gradually decrease as $t$ approaches zero. As a result, the 
numerically computed values for the maximal and averaged random errors w.r.t. \eqref{eq:3D error final} are closer to 
$E_{1-\alpha} E_{\alpha}$ than to $E_{1-\alpha} + E_{\alpha}$, meaning that the typical error is significantly smaller 
than the worst case scenario $E_{1-\alpha,\alpha}$.      

We investigate two particular choices for $\bff$, namely $\bff^1=(1,\dots,1)$ and $\bff^0=(1,0,\dots,0)$. In 1D, the 
matrix $\calA$ remains $tridiag (-0.25,0.5,-0.25)$ and as before  we 
exactly  solve the corrsponding  linear systems. In 3D, we use the finite element method in space with linear 
conforming tetrahedral 
finite elements and an algebraic multigrid (AMG) preconditioner in the PCG solutions of the corresponding linear  
systems. 
To be more precise, the BoomerAMG implementation, e.g. \cite{HENSON2002155}, is utilized in the presented numerical 
tests.
We consider $\Omega=[0,1]^3$ and $\calA$ to be the stiffness matrix from the FE discretization of 
the problem \eqref{eqn:weak} with ${\bf a}= a(x) I$, with $I$ the identity matrix in $\R^d$
and $a(x)$ is a piece-wise constant function in $\Omega$.
In this case, the jump of the coefficient $a(x)$ is introduced via the scaling factor $0 < \mu \le 1$.

The motivation for choosing these particular $\bff$'s comes from the 1D case. Since for $i=1,\dots,N$
\begin{equation*}
\langle\bPsi_i,\bff^1\rangle=\left\{\begin{array}{ll} 0, & i ~~\mbox{is even} \\ \cot{(i\pi h/2)}, & i ~~\mbox{is odd} 
\end{array}\right.
\qquad \langle\bPsi_i,\bff^0\rangle=\sin(i\pi h)=\bPsi_{1,i}, 
\end{equation*}
the decompositions of the two vectors with respect to the eigen-vectors $\{\bPsi_i \}_{i=1}^N$ are
\begin{align}\label{eq:bff1}
& \bff^1= 
\sum_{i  ~\mbox{\tiny{is odd}}} 2h
\cot(i\pi h/2)\bPsi_{i}=\sum_{i ~\mbox{\tiny{is even}}}\frac{4}{i\pi} \frac{i\pi h/2}{\tan(i\pi h/2)}\bPsi_{i}; \quad
\bff^0= 
\sum_{i=1}^N 2h\sin(i\pi  h)\bPsi_i.
\end{align}
Therefore, from $x/\tan(x)<1$ in $(0,\pi/2)$, we derive that the coefficients in the $\bff^1$-decomposition 
\eqref{eq:bff1} rapidly decay as $i$ increases, meaning that the $\bPsi_1$ component dominates and the two-step 
residual at $\Lambda_1$ determines the behavior of the error ratio $\|\bff^1_r-\bff^1\|/\|\calA^{-1}\bff^1\|$. To 
summarize
\begin{equation}
{\|\bff^1_r-\bff^1\|}/{\|\calA^{-1}\bff^1\|}\approx\left|r^1_{1-\alpha}(\Lambda_1)r^1_{\alpha}(\Lambda_1)-\Lambda_1 
\right|\xrightarrow[h \to 0]{} E_{1-\alpha} E_{\alpha}.           
\end{equation}

\begin{figure}[htp]
\begin{center}
\includegraphics[width=0.49\textwidth]{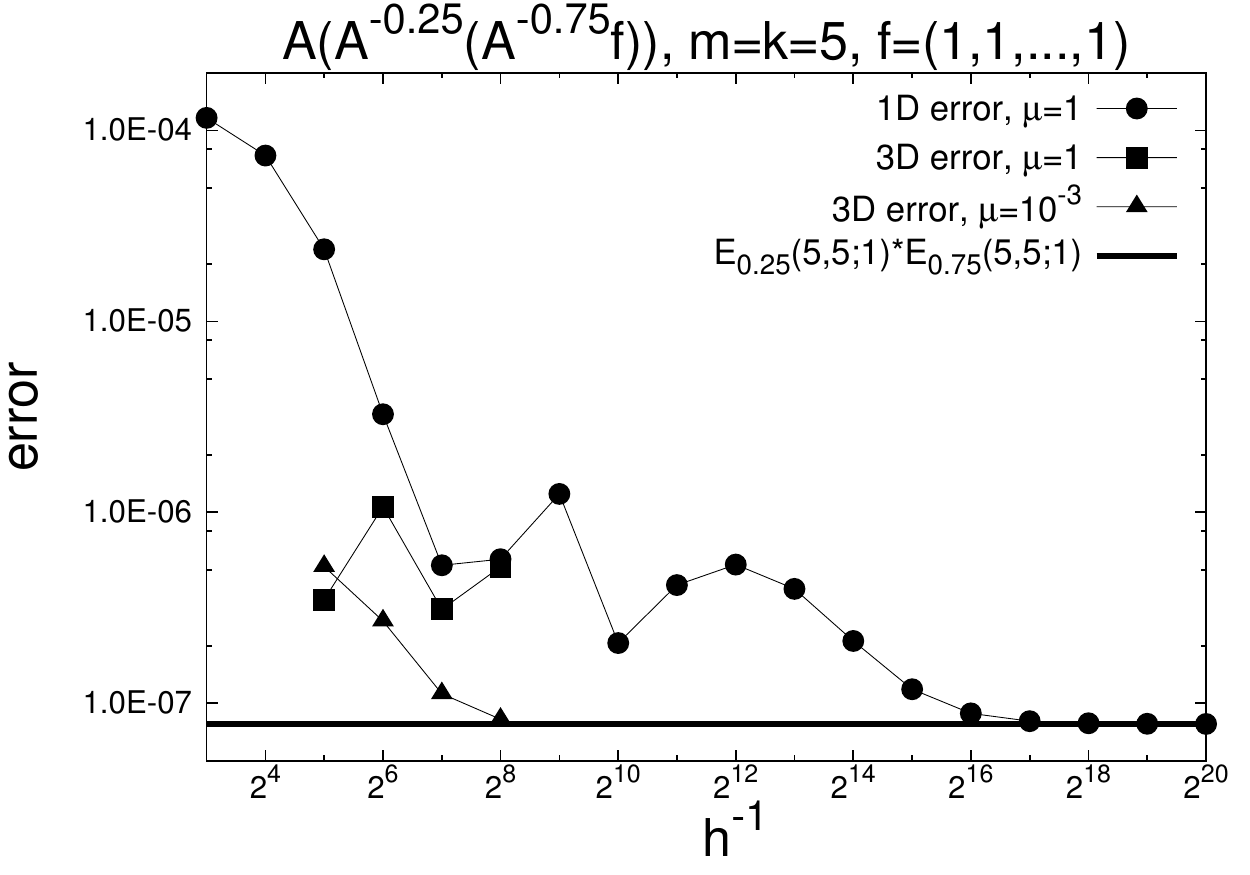} 
\includegraphics[width=0.49\textwidth]{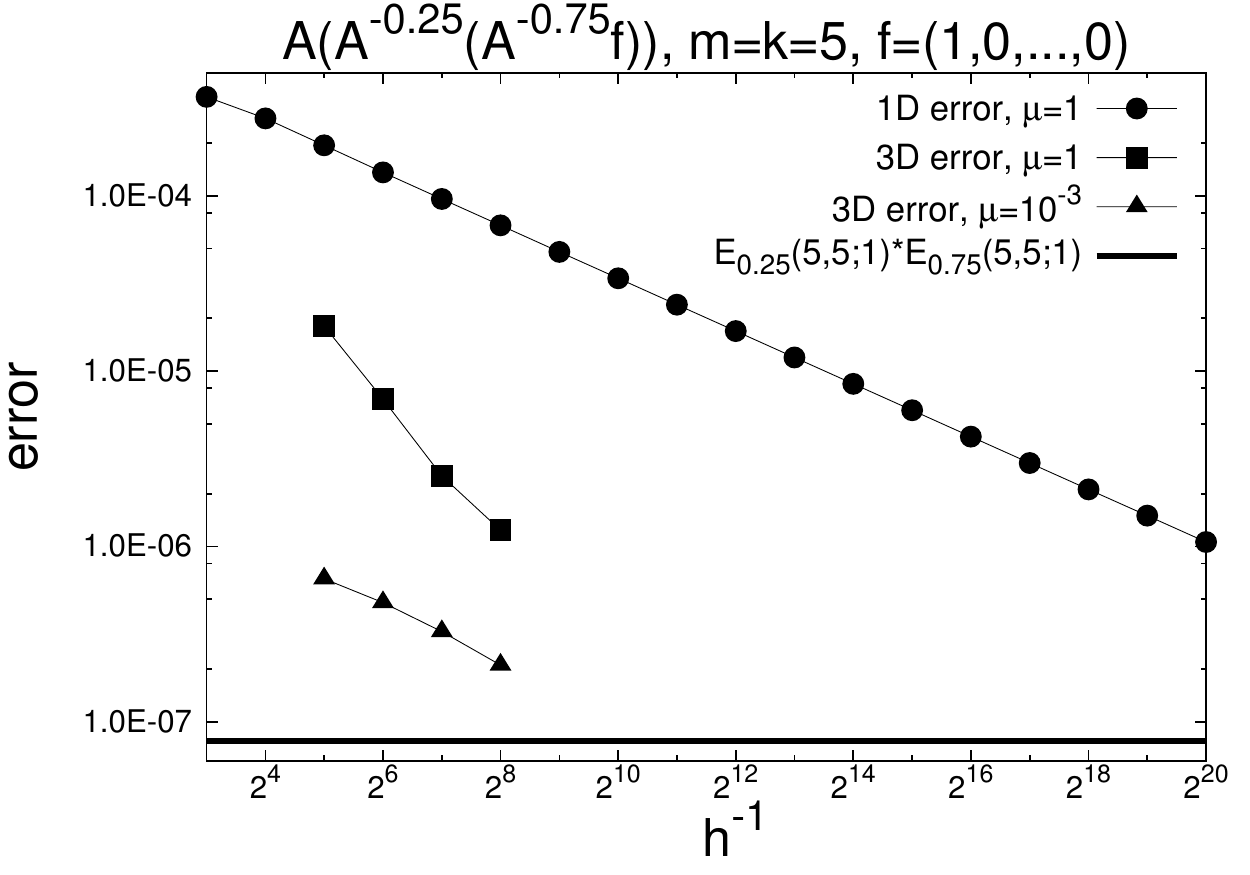} \\ 
\includegraphics[width=0.49\textwidth]{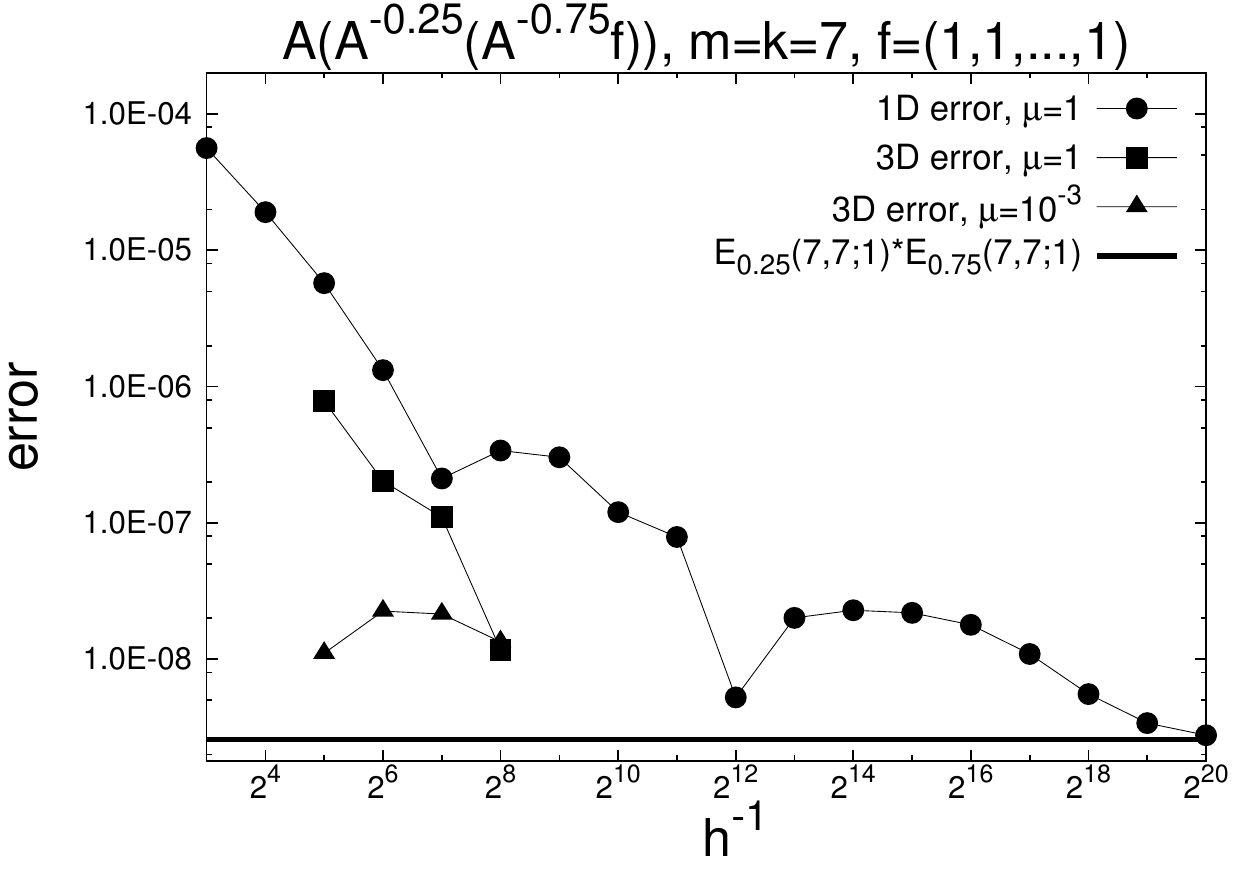} 
\includegraphics[width=0.49\textwidth]{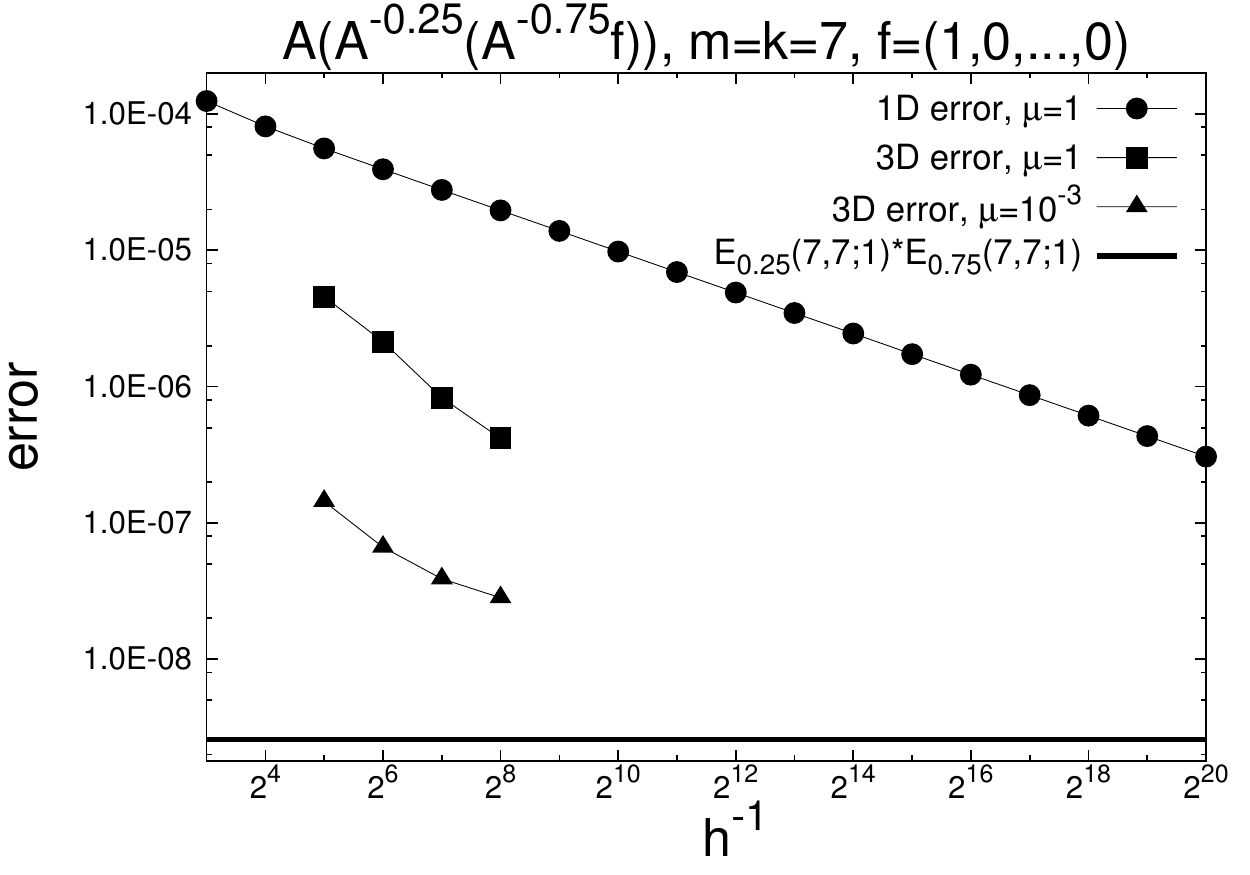} \\ 
\includegraphics[width=0.49\textwidth]{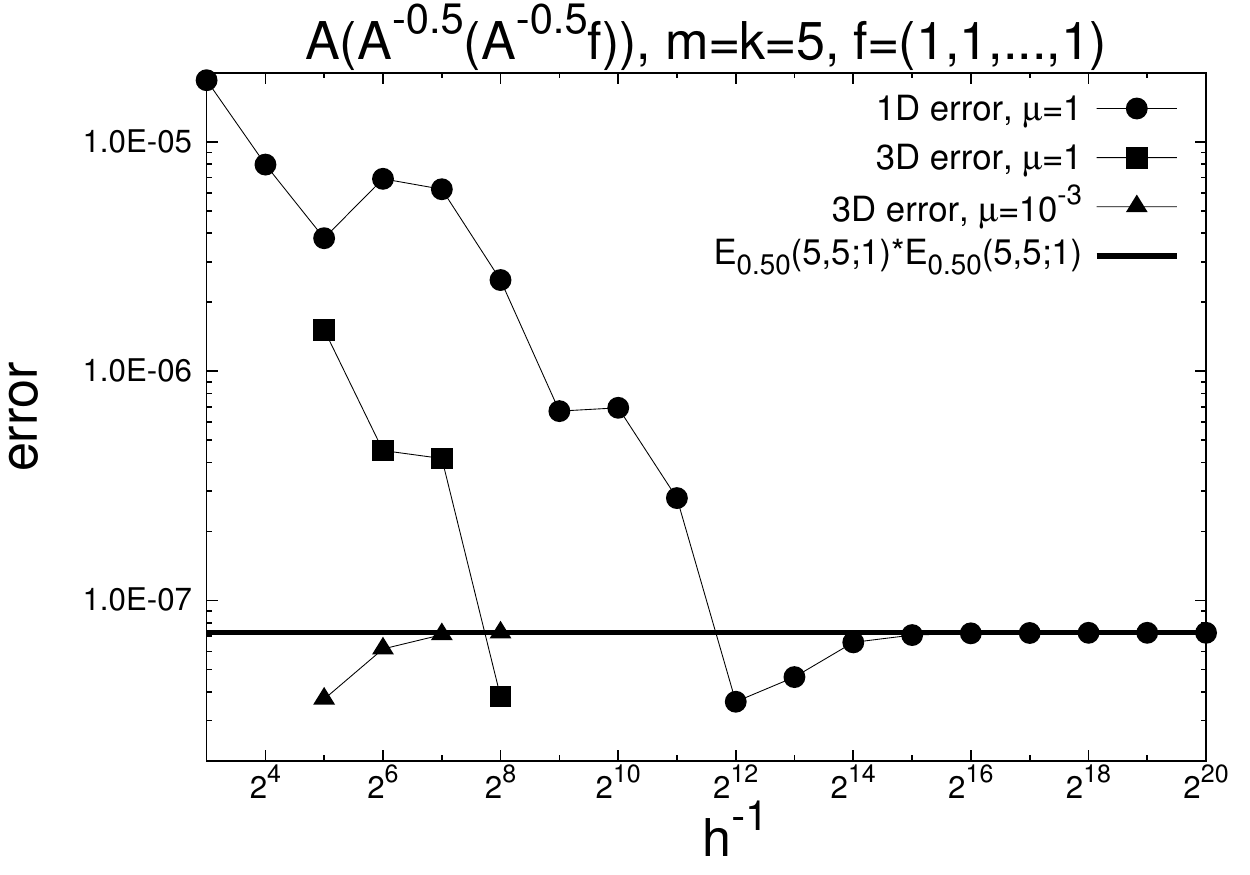} 
\includegraphics[width=0.49\textwidth]{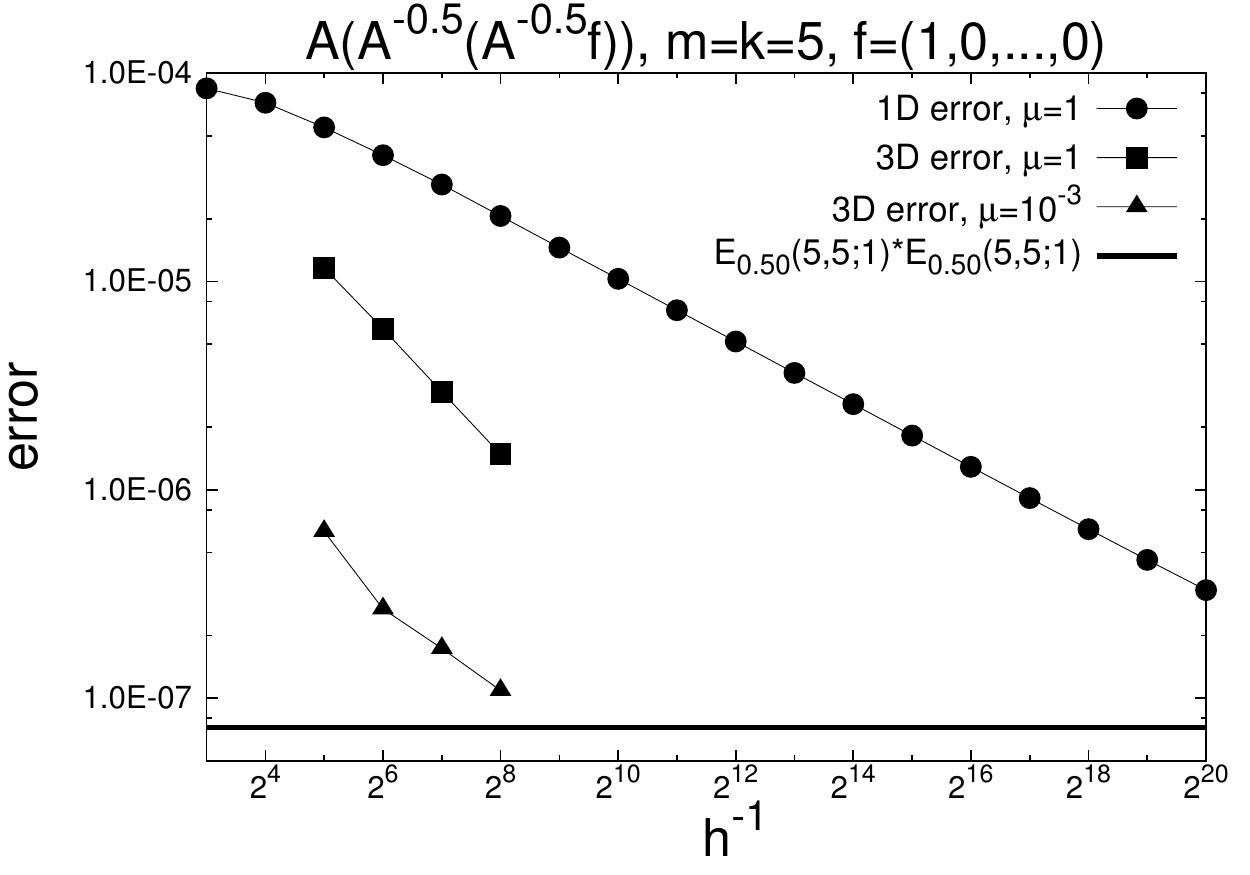} \\ 
\includegraphics[width=0.49\textwidth]{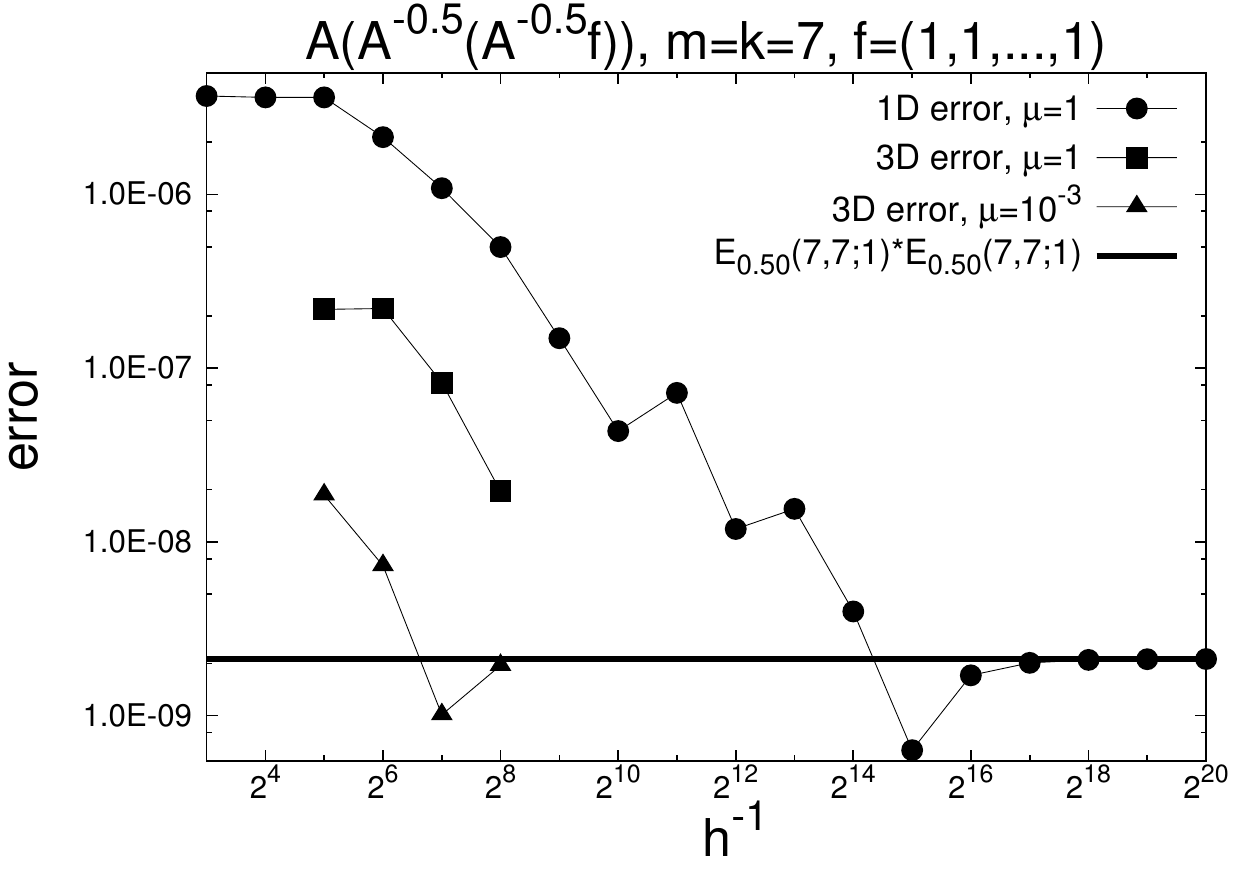} 
\includegraphics[width=0.49\textwidth]{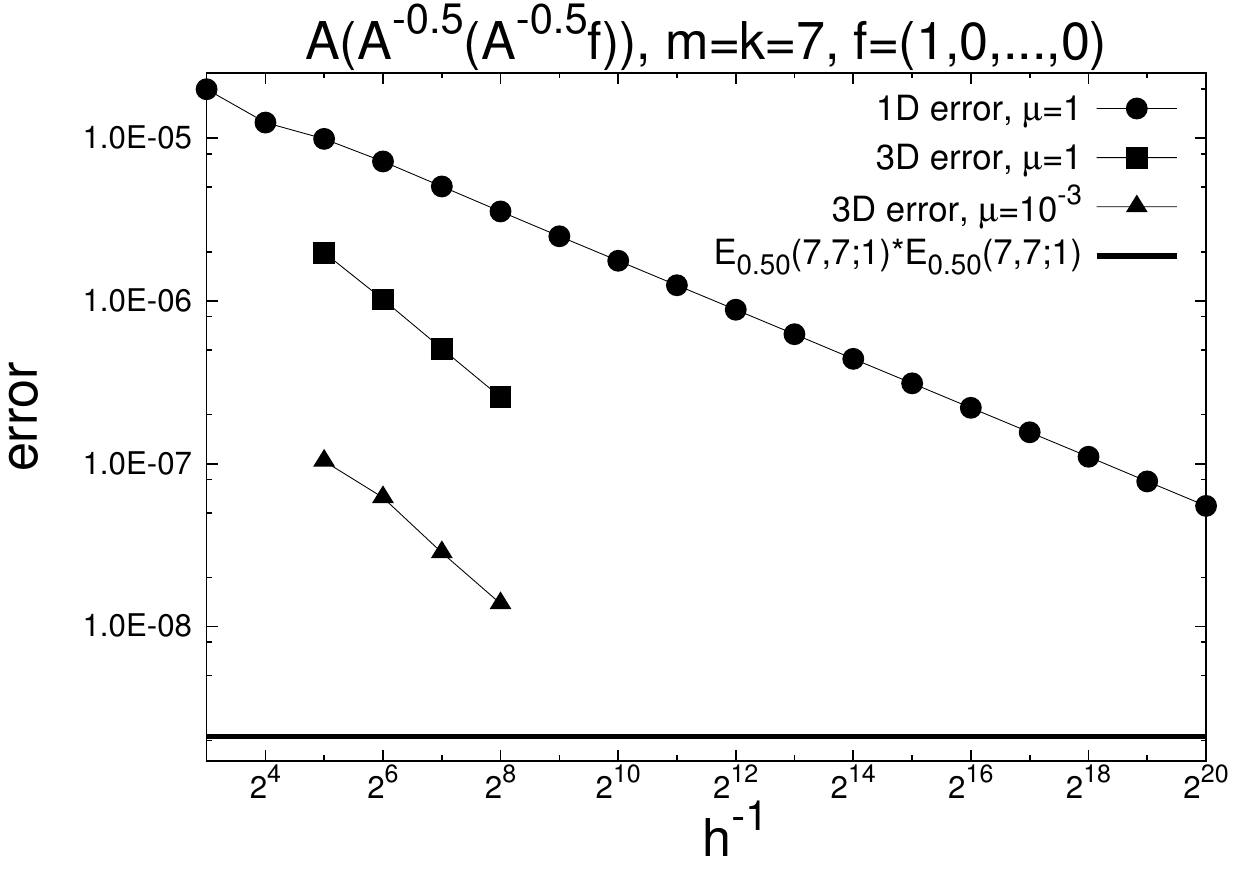} 
\end{center}
\vspace*{-0.5cm}
\caption{1D and 3D numerical error analysis. Left:  $\bff^1=(1,1,\dots,1)$. Right: $\bff^0=(1,0,\dots,0)$. The relative 
errors $\|\bff_r-\bff\|/\|\calA^{-1}\bff\|$ are plotted.}\label{fig:IDCheck} 
\end{figure}

Such asymptotic behavior of the $\ell^2$-norm error ratio of $\bff^1$  is numerically confirmed by the conducted 1D 
and 3D numerical experiments with $k=\{5,7\}$ and $\alpha=\{0.25,0.5\}$, as illustrated on the left of 
Fig.~\ref{fig:IDCheck}. 
In 3D we run simulations up to $h=2^{-8}$, which corresponds to $N=6(h^{-1}+1)^3$, while in 1D we go up to 
$h=2^{-20}$, and corresponding number of degrees of freedom $N=h^{-1}+1$. 

Clearly, the error 
tends to $E_{1-\alpha} E_{\alpha}$ as $\Lambda_1\to 0$. We observe that in 
the 3D case with homogeneous coefficient ($\calL$ is the Laplacian)
the error for mesh-size $h^{-1}\in[2^{6},2^8]$ mimics the 1D error for  mesh-size $h^{-1}\in[2^{9},2^{12}]$. 
For the heterogeneous 3D case, when in a half the unit cube the diffusion coefficients are scaled by $\mu=10^{-3}$, 
$\mathrm{k}(\calA)$ is increased (approximately) by a factor of $\mu^{-1}$,  implying that $\Lambda_1$ is closer 
to zero than in the homogeneous case. As a result, the 3D error for the 
mesh-size $h^{-1}\in[2^{6},2^8]$ mimics the 1D error for the mesh-size $h^{-1}\in[2^{14},2^{17}]$. 

The coefficients in the $\bff^0$-decomposition \eqref{eq:bff1} have symmetry due to the relations 
 $2h\sin(i\pi h)=2h\sin((N+1-i)\pi h$ with $h=1/(N+1)$. 
Unlike for $\bff^1$ case, the 
contribution of $\bPsi_1$ here is negligible and the behavior of the relative error 
$\|\bff^0_r-\bff^0\|/\|\calA^{-1}\bff^0\|$ is dominated by 
the error $|r^1_{1-\alpha}(0.5)r^1_{\alpha}(0.5)-0.5|$. This effect 
weakens with $h\to0$, because the coefficients depend on the 
mesh-size and their distribution spreads away (standard 
deviation increases) when the grid is refined. The two-step residual is stable around $t=1/2$ and the coefficients 
decay proportionally to the refinement scale, which results in monotone linear behavior of the error as a function of 
$h^{-1}$. The numerical results perfectly agree with this argument and, similar to the 1D--3D correspondence for 
$\bff^1$, we observe that the slope of the error decay is steeper in 3D and the error decreases in 
the case of piece-wise constant coefficient ${\bf a}(x)$. 

In the presented numerical tests, as a stopping criteria for the BoomerAMG PCG 
solver we have used a relative error less or equal to $10^{-12}$. 
However, we want to note that the numerical results 
are practically not affected by using stopping criteria
$10^{-6}$, instead. Furthermore, for precision $10^{-12}$ the order of 
applying the 1-BURA functions $r^1_{0.25}$ and $r^1_{0.75}$ (i.e., taking $\alpha=0.25$ or $\alpha=0.75$ first) seems 
irrelevant and the corresponding relative errors have the same  first five meaningful digits. 
This implies that the main numerical difficulties are related to the performance of Algorithm~\ref{alg1} and the 
correctness of the subsequent 
representation of $r^\beta_\alpha$ as a sum of partial fractions.

\section{Concluding remarks}

In this paper we propose algorithms of optimal complexity for solving the 
linear algebraic  system $\calA^\alpha \bfu=\bff$, $0< \alpha <1$, where 
$\calA$ is a sparse SPD matrix.
The target class of applied problems 
$\calA$ obtained 
by a finite difference or finite element discretization of a second order 
elliptic problem. 
Our main assumption is  that the system 
$\calA \bfu=\bff$ can be solved 
with optimal computational complexity, e.g. by multi-grid, multi-level 
or other efficient solution 
technique. The proposed in the paper method is applicable also when the 
matrix is not given explicitly, but one has at hand an optimal solution 
procedure for the linear system $\calA \bfu=\bff$ and a upper bound for 
the spectrum of $\calA$. 


The method is based on best uniform rational approximations (BURA) of 
$t^{\beta-\alpha}$ for $0 \le t \le 1$
and natural $\beta$. Bigger $\beta$ means stronger regularity assumptions 
and this is the reason to concentrate our considerations mostly to the cases 
$\beta \in \{1,2\}$. Depending on $\alpha$, $\beta$ and the degree $k$ of 
best uniform rational approximation 
a relative accuracy of the method between $O(10^{-3})$ and  $O(10^{-7})$ 
can be obtained  for $k \in \{5,6,7\}$. 
Then solution of $\calA^\alpha \bfu=\bff$  reduces to  solving $k+\beta$ 
problems with sparse SPD matrices of the form $\calA +c \calI$, $c \ge 0$.
 
The method has been extensively tested on a number system  arising in finite element 
approximation of one- and three-dimensional elliptic problems of second order.  
In the 3D examples we have used BoomerAMG PCG solver, \cite{HENSON2002155},
of optimal complexity.


Unlike the integral quadrature formula method from \cite{BP15}, the approximation
properties of BURA algorithm are not symmetric with respect to $\alpha=0.5$, 
$\alpha\in (0,1)$.  Some favorable results are presented for the standard (one-step) 
1-BURA, $ m=k$, in the case of smaller $\alpha$. For larger $\alpha$, the 
multi-step algorithm has some promising features. Future theoretical and experimental
investigations are needed for better understanding the observed
superior convergence of two-step BURA when $\alpha_1\ne\alpha_2$.

The method and the integral quadrature formula method from \cite{BP15} have been experimentally compared. The test setup has been taken from \cite[Section 4.1]{BP15} with $h=2^{-10}\approx10^{-3}$. The BURA method performs better in all numerical experiments and this effect increases as $\alpha$ decreases.

\section*{Acknowledgement}
This research has been partially supported by the Bulgarian National Science Fund under grant No. BNSF-DN12/1. The work of  R. Lazarov has been partially supported by the grant NSF-DMS \#1620318. The work of S. Harizanov and Y. Vutov has been partially supported by the Bulgarian National Science 
Fund under grant No. BNSF-DM02/2.

  \bibliographystyle{abbrv}
  \bibliography{references}


\section{Appendix}\label{append}

\begin{table}[h!]
\caption{{\small The coefficients in the representation \eqref{eq:other} of 1-BURA $ P^\ast_7(t)/Q^\ast_7(t) $
of $ t^{1 -\alpha}$ on $[0,1]$}}\label{tab:App1}
\centering
\begin{tabular}{|| c | c | c | c | c | c | c ||}
\hline \hline
\multirow{2}{*}{j} & \multicolumn{2}{|c|}{$\alpha=0.25$} & \multicolumn{2}{|c|}{$\alpha=0.5$} & 
\multicolumn{2}{|c|}{$\alpha=0.75$}\\ \cline{2-7}
  &$c_j$ & $ d_j$  &$c_j$ & $ d_j$  &   $ c_j$ & $d_j$ \\ \hline
0 & 3.25659E-06 &  0.00000E+00 & 4.60366E-05 &  0.00000E+00 & 7.85127E-04 &  0.00000E+00\\
1 & 1.44761E-04 & -8.74568E-06 & 9.55918E-04 & -3.58368E-07 & 6.54730E-03 & -2.21777E-10\\
2 & 1.08271E-03 & -2.17427E-04 & 4.65253E-03 & -1.93872E-05 & 1.81424E-02 & -7.80406E-08\\
3 & 5.25468E-03 & -2.38575E-03 & 1.63200E-02 & -3.71546E-04 & 4.17928E-02 & -5.55397E-06\\
4 & 2.05418E-02 & -1.77397E-02 & 4.80082E-02 & -4.34363E-03 & 8.61599E-02 & -1.88388E-04\\
5 & 7.43766E-02 & -1.07563E-01 & 1.28889E-01 & -3.80180E-02 & 1.65247E-01 & -4.07531E-03\\
6 & 3.36848E-01 & -6.71407E-01 & 3.73943E-01 & -3.00901E-01 & 3.11865E-01 & -6.65806E-02\\
7 & 1.16449E+01 & -1.55256E+01 & 2.94945E+00 & -4.68768E+00 & 8.94453E-01 & -1.30039E+00\\
 \hline \hline
\end{tabular}
%
\caption{{\small The coefficients in the representation \eqref{eq:other} of 2-BURA $ P^\ast_5(t)/Q^\ast_4(t) $
of $ t^{2 -\alpha}$ on $[0,1]$}}\label{tab:App2}
\centering
\begin{tabular}{|| c | c | c | c | c | c | c ||}
\hline \hline
\multirow{2}{*}{j} & \multicolumn{2}{|c|}{$\alpha=0.25$} & \multicolumn{2}{|c|}{$\alpha=0.5$} & 
\multicolumn{2}{|c|}{$\alpha=0.75$}\\ \cline{2-7}
  &$c_j$ & $ d_j$  &$c_j$ & $ d_j$  &   $ c_j$ & $d_j$ \\ \hline
0,1 & 3.37593E-03 &  0.00000E+00 & 2.34402E-02 &  0.00000E+00 & 1.42137E-01 &  0.00000E+00\\
0,2 & -6.2333E-07 &  0.00000E+00 & -2.0349E-06 &  0.00000E+00 & -3.8415E-06 &  0.00000E+00\\
  1 & 2.40583E-02 & -1.47434E-02 & 7.84172E-02 & -8.08787E-03 & 1.69113E-01 & -3.82073E-03\\
  2 & 8.72123E-02 & -1.22415E-01 & 1.75667E-01 & -7.81739E-02 & 2.20935E-01 & -4.55009E-02\\
  3 & 3.80068E-01 & -7.92754E-01 & 4.54976E-01 & -5.27883E-01 & 3.41427E-01 & -3.37721E-01\\
  4 & 1.30317E+01 & -1.80742E+01 & 3.58723E+00 & -7.18890E+00 & 1.04996E+00 & -3.71162E+00\\
 \hline \hline
\end{tabular}

\caption{{\small The coefficients in the representation \eqref{eq:other} of 2-BURA $ P^\ast_7(t)/Q^\ast_6(t) $
of $ t^{2 -\alpha}$ on $[0,1]$}}\label{tab:App3}
\centering
\begin{tabular}{|| c | c | c | c | c | c | c ||}
\hline \hline
\multirow{2}{*}{j} & \multicolumn{2}{|c|}{$\alpha=0.25$} & \multicolumn{2}{|c|}{$\alpha=0.5$} & 
\multicolumn{2}{|c|}{$\alpha=0.75$}\\ \cline{2-7}
  &$c_j$ & $ d_j$  &$c_j$ & $ d_j$  &   $ c_j$ & $d_j$ \\ \hline
0,1 & 7.38825E-04 &  0.00000E+00 & 7.91901E-03 &  0.00000E+00 & 7.87824E-02 &  0.00000E+00\\
0,2 & -1.8043E-08 &  0.00000E+00 & -7.8577E-08 &  0.00000E+00 & -2.0108E-07 &  0.00000E+00\\
  1 & 5.14919E-03 & -1.91822E-03 & 2.62088E-02 & -9.16055E-04 & 9.33258E-02 & -3.59264E-04\\
  2 & 1.66782E-02 & -1.48538E-02 & 5.50057E-02 & -8.44288E-03 & 1.17911E-01 & -4.15349E-03\\
  3 & 4.59429E-02 & -7.22366E-02 & 1.06623E-01 & -4.61173E-02 & 1.53620E-01 & -2.65144E-02\\
  4 & 1.29584E-01 & -2.99678E-01 & 2.11649E-01 & -2.05570E-01 & 2.09664E-01 & -1.31566E-0\\
  5 & 5.25079E-01 & -1.41237E+00 & 5.39001E-01 & -9.66103E-01 & 3.42629E-01 & -6.42203E-01\\
  6 & 1.81241E+01 & -2.83519E+01 & 4.40913E+00 & -1.12571E+01 & 1.13935E+00 & -5.82558E+00\\
 \hline \hline
\end{tabular}
\end{table}

\begin{table}[thp]
\caption{{\small Numerical error  $\varepsilon$ in \eqref{bound} 
for $(m,k)=(7,7)$ and $\beta=1$. 
For each $\alpha$ the left column shows the results for $\mathbf f$ 
consisting of eigenvectors, while the right column is the error for $\mathbf f$ taken as 1000
random eigenvector combinations.  
In each box averaged error (top) and the 
maximal error (bottom) are reported.}}\label{table:App6}
\centering
\begin{tabular}{|c|cc|cc|cc|}
\hline
\multirow{3}{*}{$h^{-1}$} & \multicolumn{2}{|c|}{$\alpha=0.25$} & \multicolumn{2}{|c|}{$\alpha=0.5$} & 
\multicolumn{2}{|c|}{$\alpha=0.75$}\\ \cline{2-7}
& \multicolumn{2}{|c|}{$E_\alpha(7,7;1)=${\bf 3.2566}{\small E-06}} & \multicolumn{2}{|c|}{$E_\alpha(7,7;1)=${\bf 
4.6037}{\small 
E-05}} & \multicolumn{2}{|c|}{$E_\alpha(7,7;1)=${\bf 7.8966}{\small E-04}}\\ \cline{2-7}
 & $\{\bPsi_i \}_{i=1}^N$ & rand1000 & $\{\bPsi_i \}_{i=1}^N$ & rand1000 & $\{\bPsi_i \}_{i=1}^N$ & rand1000\\ \hline
\multirow{2}{*}{8 }  & 1.9565E-06 & 1.7915E-06 & 2.8431E-05 & 2.7493E-05 & 5.6995E-04 & 6.9670E-04\\
                     & 3.2061E-06 & 2.9876E-06 & 4.6024E-05 & 4.5356E-05 & 7.6989E-04 & 7.6437E-04\\\hline
\multirow{2}{*}{16 } & 2.0032E-06 & 2.4354E-06 & 2.6759E-05 & 2.3804E-05 & 5.2532E-04 & 6.5826E-04\\
                     & 3.1948E-06 & 2.9711E-06 & 4.5812E-05 & 3.7222E-05 & 7.8518E-04 & 7.0151E-04\\\hline
\multirow{2}{*}{32 } & 2.1362E-06 & 2.7011E-06 & 2.9166E-05 & 3.3229E-05 & 5.0559E-04 & 5.9927E-04\\
                     & 3.2522E-06 & 2.9153E-06 & 4.5720E-05 & 3.7850E-05 & 7.8286E-04 & 6.4667E-04\\\hline
\multirow{2}{*}{64 } & 2.0616E-06 & 1.9800E-06 & 2.9487E-05 & 3.9642E-05 & 4.9911E-04 & 4.8683E-04\\
                     & 3.2564E-06 & 2.8259E-06 & 4.6035E-05 & 4.3436E-05 & 7.8744E-04 & 6.3295E-04\\\hline
\multirow{2}{*}{128} & 2.0567E-06 & 1.5541E-06 & 2.9505E-05 & 4.0691E-05 & 4.9672E-04 & 3.5982E-04\\
                     & 3.2565E-06 & 2.6089E-06 & 4.6033E-05 & 4.4048E-05 & 7.8922E-04 & 6.7316E-04\\\hline
\multirow{2}{*}{256} & 2.0675E-06 & 1.8786E-06 & 2.9370E-05 & 3.8431E-05 & 4.9953E-04 & 4.4143E-04\\
                     & 3.2566E-06 & 2.6313E-06 & 4.6029E-05 & 4.3061E-05 & 7.8959E-04 & 7.0087E-04\\\hline
\multirow{2}{*}{512} & 2.0734E-06 & 2.6468E-06 & 2.9312E-05 & 3.0132E-05 & 5.0116E-04 & 6.3817E-04\\
                     & 3.2566E-06 & 3.0750E-06 & 4.6036E-05 & 3.9441E-05 & 7.8965E-04 & 7.4051E-04\\\hline
\multirow{2}{*}{1024}& 2.0736E-06 & 2.3078E-06 & 2.9288E-05 & 2.3497E-05 & 5.0152E-04 & 6.3749E-04\\
                     & 3.2566E-06 & 2.9537E-06 & 4.6037E-05 & 3.7366E-05 & 7.8965E-04 & 7.1731E-04\\\hline
\end{tabular}
\caption{{\small Numerical error  $\varepsilon$ in \eqref{bound} 
for $(m,k)=(7,6)$ and $\beta=2$. 
For each $\alpha$ the left column shows the results for $\mathbf f$ 
consisting of eigenvectors, while the right column is the error for $\mathbf f$ taken as 1000
random eigenvector combinations.  
In each box averaged error (top) and the 
maximal error (bottom) are reported.}}\label{table:App7}
\centering
\begin{tabular}{|c|cc|cc|cc|}
\hline
\multirow{3}{*}{$h^{-1}$} & \multicolumn{2}{|c|}{$\alpha=0.25$} & \multicolumn{2}{|c|}{$\alpha=0.5$} & 
\multicolumn{2}{|c|}{$\alpha=0.75$}\\ \cline{2-7}
& \multicolumn{2}{|c|}{$E_\alpha(7,6;2)=${\bf 1.8043}{\small E-08}} & \multicolumn{2}{|c|}{$E_\alpha(7,6;2)=${\bf 
7.8577}{\small E-08}} & \multicolumn{2}{|c|}{$E_\alpha(7,6;2)=${\bf 2.0108}{\small E-07}}\\ \cline{2-7}
 & $\{\bPsi_i \}_{i=1}^N$ & rand1000 & $\{\bPsi_i \}_{i=1}^N$ & rand1000 & $\{\bPsi_i \}_{i=1}^N$ & rand1000\\ \hline
\multirow{2}{*}{8 }  & 1.0632E-08 & 1.0631E-08 & 5.9400E-08 & 6.0261E-08 & 1.0966E-07 & 1.0468E-07\\
                     & 1.5547E-08 & 1.2642E-08 & 7.8416E-08 & 6.9105E-08 & 2.01E00-07 & 1.5912E-07\\\hline 
\multirow{2}{*}{16}  & 9.5732E-09 & 1.3119E-08 & 5.4368E-08 & 5.3487E-08 & 1.1609E-07 & 1.7237E-07\\
                     & 1.7828E-08 & 1.3955E-08 & 7.7768E-08 & 7.6782E-08 & 2.0032E-07 & 1.8421E-07\\\hline 
\multirow{2}{*}{32}  & 1.1235E-08 & 3.6625E-09 & 5.1080E-08 & 7.5291E-08 & 1.2520E-07 & 6.2571E-08\\
                     & 1.8049E-08 & 1.5797E-08 & 7.8585E-08 & 7.8456E-08 & 2.0100E-07 & 1.9484E-07\\\hline 
\multirow{2}{*}{64}  & 1.1198E-08 & 3.2757E-09 & 5.1082E-08 & 7.7033E-08 & 1.2412E-07 & 2.8021E-08\\
                     & 1.8040E-08 & 1.0433E-08 & 7.8644E-08 & 7.8491E-08 & 2.0090E-07 & 9.7132E-08\\\hline 
\multirow{2}{*}{128} & 1.1468E-08 & 1.6223E-08 & 5.0090E-08 & 6.7111E-08 & 1.2665E-07 & 5.8966E-08\\
                     & 1.8066E-08 & 1.7177E-08 & 7.8573E-08 & 7.7379E-08 & 2.0110E-07 & 1.4951E-07\\\hline 
\multirow{2}{*}{256} & 1.1446E-08 & 3.8759E-09 & 4.9977E-08 & 4.4867E-08 & 1.2795E-07 & 1.8929E-07\\
                     & 1.8062E-08 & 1.7213E-08 & 7.8638E-08 & 6.9319E-08 & 2.0111E-07 & 2.0039E-07\\\hline 
\multirow{2}{*}{512} & 1.1473E-08 & 1.1522E-08 & 4.9994E-08 & 3.1754E-08 & 1.2783E-07 & 6.5736E-08\\
                     & 1.8065E-08 & 1.2699E-08 & 7.8647E-08 & 7.0909E-08 & 2.0111E-07 & 1.9958E-07\\\hline 
\end{tabular}
\end{table}

\begin{table}[htp]
\caption{{\small Numerical error  $\varepsilon$ in \eqref{bound} 
for the multi-step case $(m,k)=(5,5)$ and $\beta=1$. For each setup the left column shows the results for 
$\mathbf f$ consisting of eigenvectors, while the right column is the error for $\mathbf f$ taken as 1000
random eigenvector combinations. In each box averaged error (top) and the 
maximal error (bottom) are reported.}}\label{table:errorMultiStepBeta1}
\centering
\begin{tabular}{|c|cc|cc|cc|}
\hline
\multirow{3}{*}{$h^{-1}$} & \multicolumn{2}{|c|}{$\alpha=0.5$} & \multicolumn{2}{|c|}{$\alpha=0.75$} & 
\multicolumn{2}{|c|}{$\alpha=0.75$}\\ \cline{2-7}
& \multicolumn{2}{|c|}{$\bfu_r=\calA^{-0.25}\left(\calA^{-0.25}\bff\right)$} & 
\multicolumn{2}{|c|}{$\bfu_r=\calA^{-0.25}\big(\calA^{-0.25}\left(\calA^{-0.25}\bff\right)\big)$} & 
\multicolumn{2}{|c|}{$\bfu_r=\calA^{-0.25}\left(\calA^{-0.50}\bff\right)$}\\ \cline{2-7}
 & $\{\bPsi_i) \}_{i=1}^N$ & rand1000 & $\{\bPsi_i) \}_{i=1}^N$ & rand1000 & $\{\bPsi_i) \}_{i=1}^N$ & rand1000\\ \hline
\multirow{2}{*}{16}  & 4.1065E-05 & 4.1827E-05 & 8.7666E-05 & 1.2954E-04 & 1.9972E-04 & 1.7173E-04\\
                     & 9.4745E-05 & 7.2813E-05 & 2.3891E-04 & 2.2187E-04 & 3.9098E-04 & 3.3170E-04\\\hline
\multirow{2}{*}{32}  & 4.7118E-05 & 5.1558E-05 & 1.0939E-04 & 2.0585E-04 & 2.1337E-04 & 2.3155E-04\\
                     & 1.2093E-04 & 1.0361E-04 & 4.8074E-04 & 3.9256E-04 & 5.0294E-04 & 4.1310E-04\\\hline
\multirow{2}{*}{64}  & 5.1447E-05 & 1.0267E-04 & 1.4007E-04 & 7.7816E-04 & 2.3385E-04 & 4.5311E-04\\
                     & 2.0432E-04 & 1.5404E-04 & 1.1396E-03 & 9.9375E-04 & 5.6349E-04 & 5.4832E-04\\\hline
\multirow{2}{*}{128} & 5.7422E-05 & 3.2390E-04 & 2.0400E-04 & 4.1774E-03 & 2.3648E-04 & 4.0115E-04\\
                     & 4.5572E-04 & 4.3974E-04 & 6.2524E-03 & 6.0171E-03 & 5.9066E-04 & 5.4683E-04\\\hline
\multirow{2}{*}{256} & 5.9104E-05 & 4.3670E-04 & 2.3325E-04 & 7.4754E-03 & 2.3848E-04 & 7.7769E-04\\
                     & 5.3831E-04 & 5.2302E-04 & 1.0097E-02 & 9.7472E-03 & 1.0680E-03 & 1.0328E-03\\\hline
\multirow{2}{*}{512} & 5.9310E-05 & 5.9445E-04 & 2.4906E-04 & 1.3916E-02 & 2.4811E-04 & 3.8679E-03\\
                     & 7.7978E-04 & 7.5465E-04 & 1.9729E-02 & 1.9016E-02 & 5.8996E-03 & 5.6791E-03\\\hline
\multirow{2}{*}{1024}& 5.9021E-05 & 4.1906E-04 & 2.4148E-04 & 9.6104E-03 & 2.5064E-04 & 5.3558E-03\\
                     & 8.0305E-04 & 7.2124E-04 & 1.9733E-02 & 1.7263E-02 & 6.8130E-03 & 6.5918E-03\\\hline
\end{tabular}

\caption{{\small Numerical error  $\varepsilon$ in \eqref{bound} 
for the multi-step case $(m,k)=(7,7)$ and $\beta=1$. For each setup the left column shows the results for 
$\mathbf f$ consisting of eigenvectors, while the right column is the error for $\mathbf f$ taken as 1000
random eigenvector combinations. In each box averaged error (top) and the 
maximal error (bottom) are reported.}}\label{table:App4}
\centering
\begin{tabular}{|c|cc|cc|cc|}
\hline
\multirow{3}{*}{$h^{-1}$} & \multicolumn{2}{|c|}{$\alpha=0.5$} & \multicolumn{2}{|c|}{$\alpha=0.75$} & 
\multicolumn{2}{|c|}{$\alpha=0.75$}\\ \cline{2-7}
& \multicolumn{2}{|c|}{$\bfu_r=\calA^{-0.25}\left(\calA^{-0.25}\bff\right)$} & 
\multicolumn{2}{|c|}{$\bfu_r=\calA^{-0.25}\big(\calA^{-0.25}\left(\calA^{-0.25}\bff\right)\big)$} & 
\multicolumn{2}{|c|}{$\bfu_r=\calA^{-0.25}\left(\calA^{-0.50}\bff\right)$}\\ \cline{2-7}
 & $\{\bPsi_i) \}_{i=1}^N$ & rand1000 & $\{\bPsi_i) \}_{i=1}^N$ & rand1000 & $\{\bPsi_i) \}_{i=1}^N$ & rand1000\\ \hline
\multirow{2}{*}{8} & 5.0192E-06 & 5.1628E-06 & 1.0010E-05 & 1.1877E-05 & 3.6246E-05 & 4.0301E-05\\  
                   & 9.8128E-06 & 9.6002E-06 & 2.5169E-05 & 2.4547E-05 & 7.0309E-05 & 6.8761E-05\\\hline 
\multirow{2}{*}{16}& 5.8605E-06 & 1.4026E-05 & 1.5501E-05 & 6.6467E-05 & 3.4149E-05 & 3.7829E-05\\     
                   & 1.9834E-05 & 1.9260E-05 & 9.7937E-05 & 9.4949E-05 & 6.6285E-05 & 5.2376E-05\\\hline
\multirow{2}{*}{32}& 6.7145E-06 & 2.0145E-05 & 2.0351E-05 & 1.2619E-04 & 3.9973E-05 & 8.4991E-05\\  
                   & 2.5714E-05 & 2.5005E-05 & 1.7685E-04 & 1.7140E-04 & 1.1430E-04 & 1.1152E-04\\\hline
\multirow{2}{*}{64}& 6.3275E-06 & 1.8390E-05 & 1.9405E-05 & 1.4991E-04 & 4.3354E-05 & 1.6584E-04\\   
                   & 2.6754E-05 & 2.3775E-05 & 1.8260E-04 & 1.7894E-04 & 2.2211E-04 & 2.1671E-04\\\hline
\multirow{2}{*}{128}& 6.3162E-06 & 1.7655E-05 & 1.9982E-05 & 1.9293E-04 & 4.5322E-05 & 2.5209E-04\\ 
                   & 2.7195E-05 & 2.1459E-05 & 2.3671E-04 & 2.3119E-04 & 3.2535E-04 & 3.1700E-04\\\hline
\multirow{2}{*}{256}& 6.5595E-06 & 3.5085E-05 & 2.4035E-05 & 5.9986E-04 & 4.5112E-05 & 2.3879E-04\\   
                   & 4.5202E-05 & 4.4046E-05 & 8.6567E-04 & 8.3841E-04 & 3.1927E-04 & 2.9358E-04\\\hline
\multirow{2}{*}{512}& 6.7797E-06 & 8.1866E-05 & 2.9931E-05 & 2.1139E-03 & 4.6091E-05 & 4.4815E-04\\   
                   & 1.1361E-04 & 1.0915E-04 & 3.1081E-03 & 2.9789E-03 & 5.7114E-04 & 5.5602E-04\\\hline
\multirow{2}{*}{1024}& 6.8055E-06 & 9.0098E-05 & 3.1472E-05 & 3.0128E-03 & 4.7830E-05 & 1.1992E-03\\  
                   & 1.1366E-04 & 1.0352E-04 & 3.9165E-03 & 3.8048E-03 & 1.6865E-03 & 1.6425E-03\\\hline
\end{tabular}
\end{table}
 
\begin{table}[htp]
\caption{{\small Numerical $\ell^2$-error $\varepsilon$ for $h^{-1}=2^n$ based on \eqref{eq:3D error final} for
$\mathbf  f$ reconstruction with $(m,k)=(5,5)$ and $\beta=1$. For each setup the left column shows the results for 
$\mathbf f$ consisting of eigenvectors, while in the middle and right columns are the errors for $\mathbf f$ taken as 
1000 random eigenvector combinations. In each box averaged error (top) and the 
maximal error (bottom) are reported.}}
\label{table:errorIDCheck}
\centering
\def\arraystretch{0.9}
\begin{tabular}{|c|ccc|ccc|}
\hline
\multirow{3}{*}{$h^{-1}$} & \multicolumn{3}{|c|}{$\calA(\calA^{-0.25}(\calA^{-0.75}\bff))$} & 
\multicolumn{3}{|c|}{$\calA(\calA^{-0.5}(\calA^{-0.5}\bff))$}\\ \cline{2-7}
& \multicolumn{3}{|c|}{$E_{0.75}+E_{0.25}-E_{0.75}E_{0.25}=${\bf 2.7448}E-03} & 
\multicolumn{3}{|c|}{$2 E_{0.5}-E^2_{0.5}=${\bf 5.3784}E-04}\\ \cline{2-7}
 & $\{\bPsi_i) \}_{i=1}^N$ & rand1000  & rand1000 & $\{\bPsi_i) \}_{i=1}^N$ & rand1000 & rand1000\\ \hline
\multirow{2}{*}{16} & 4.7354E-04 & 7.5219E-05 & 7.5147E-05 & 1.0598E-04 & 8.7507E-06 & 8.6706E-06\\
                   & 2.4855E-03 & 8.1696E-05 & 8.2122E-05 & 4.8149E-04 & 1.3993E-05 & 1.5610E-05\\\hline
\multirow{2}{*}{32} & 4.7350E-04 & 2.4231E-05 & 2.4240E-05 & 1.0689E-04 & 4.0082E-06 & 3.9951E-06\\
                   & 2.6751E-03 & 2.5452E-05 & 2.5517E-05 & 5.1127E-04 & 6.8881E-06 & 6.7313E-06\\\hline
\multirow{2}{*}{64} & 4.7775E-04 & 3.4729E-06 & 3.4744E-06 & 1.0850E-04 & 6.9353E-06 & 6.9349E-06\\
                   & 2.7268E-03 & 4.0236E-06 & 3.9443E-06 & 5.3095E-04 & 7.3211E-06 & 7.2493E-06\\\hline
\multirow{2}{*}{128} & 4.7955E-04 & 6.0366E-07 & 6.0448E-07 & 1.0878E-04 & 6.2346E-06 & 6.2351E-06\\
                   & 2.7402E-03 & 8.7980E-07 & 9.4323E-07 & 5.3609E-04 & 6.2777E-06 & 6.2911E-06\\\hline
\multirow{2}{*}{256} & 4.8034E-04 & 5.7357E-07 & 5.7343E-07 & 1.0891E-04 & 2.5050E-06 & 2.5049E-06\\
                   & 2.7437E-03 & 5.8741E-07 & 5.8982E-07 & 5.3740E-04 & 2.5149E-06 & 2.5144E-06\\\hline
\multirow{2}{*}{512} & 4.8074E-04 & 1.2493E-06 & 1.2493E-06 & 1.0895E-04 & 6.7062E-07 & 6.7059E-07\\
                   & 2.7445E-03 & 1.2496E-06 & 1.2496E-06 & 5.3773E-04 & 6.7463E-07 & 6.7458E-07\\\hline
\multirow{2}{*}{1024}& 4.8094E-04 & 2.0673E-07 & 2.0677E-07 & 1.0898E-04 & 6.9352E-07 & 6.9357E-07\\
                   & 2.7447E-03 & 2.0885E-07 & 2.0924E-07 & 5.3781E-04 & 6.9625E-07 & 6.9637E-07\\\hline
\multirow{2}{*}{2048}& 4.8104E-04 & 4.1620E-07 & 4.1620E-07 & 1.0899E-04 & 2.8010E-07 & 2.8010E-07\\
                   & 2.7448E-03 & 4.1646E-07 & 4.1649E-07 & 5.3783E-04 & 2.8034E-07 & 2.8027E-07\\\hline
\multirow{2}{*}{4096}& 4.8109E-04 & 5.3275E-07 & 5.3275E-07 & 1.0900E-04 & 3.6167E-08 & 3.6185E-08\\
                   & 2.7448E-03 & 5.3281E-07 & 5.3282E-07 & 5.3784E-04 & 3.7534E-08 & 3.7333E-08\\\hline
\end{tabular}
\caption{{\small Numerical $\ell^2$-error $\varepsilon$ for $h^{-1}=2^n$ based on \eqref{eq:3D error final} for
$\mathbf  f$ reconstruction with $(m,k)=(7,7)$ and $\beta=1$. For each setup the left column shows the results for 
$\mathbf f$ consisting of eigenvectors, while in the middle and right columns are the errors for $\mathbf f$ taken as 
1000 random eigenvector combinations. In each box averaged error (top) and the 
maximal error (bottom) are reported.}}
\label{table:App5} 
\centering
\def\arraystretch{0.9}
\begin{tabular}{|c|ccc|ccc|}
\hline
\multirow{3}{*}{$h^{-1}$} & \multicolumn{3}{|c|}{$\calA(\calA^{-0.25}(\calA^{-0.75}\bff))$} & 
\multicolumn{3}{|c|}{$\calA(\calA^{-0.5}(\calA^{-0.5}\bff))$}\\ \cline{2-7}
& \multicolumn{3}{|c|}{$E_{0.75}+E_{0.25}-E_{0.75}E_{0.25}=${\bf 7.9291}E-04} & 
\multicolumn{3}{|c|}{$2 E_{0.5}- E^2_{0.5}=${\bf 9.2071}E-05}\\ \cline{2-7}
 & $\{\bPsi_i) \}_{i=1}^N$ & rand1000  & rand1000 & $\{\bPsi_i) \}_{i=1}^N$ & rand1000 & rand1000\\ \hline
\multirow{2}{*}{16}   & 2.7428E-04 & 1.9574E-05 & 1.9581E-05 & 3.6147E-05 & 3.6776E-06 & 3.6796E-06\\
                      & 6.9284E-04 & 2.2126E-05 & 2.1900E-05 & 8.3254E-05 & 3.9151E-06 & 3.9819E-06\\\hline
\multirow{2}{*}{32}   & 2.7502E-04 & 5.8770E-06 & 5.8751E-06 & 3.7098E-05 & 3.6259E-06 & 3.6258E-06\\
                      & 7.6589E-04 & 6.5626E-06 & 6.3826E-06 & 8.5885E-05 & 3.7467E-06 & 3.7298E-06\\\hline
\multirow{2}{*}{64}   & 2.7683E-04 & 1.3628E-06 & 1.3604E-06 & 3.7178E-05 & 2.1428E-06 & 2.1423E-06\\
                      & 7.8591E-04 & 1.5488E-06 & 1.5196E-06 & 9.0463E-05 & 2.1745E-06 & 2.1682E-06\\\hline
\multirow{2}{*}{128}  & 2.7801E-04 & 2.2851E-07 & 2.3003E-07 & 3.7255E-05 & 1.0873E-06 & 1.0873E-06\\
                      & 7.9113E-04 & 3.2669E-07 & 3.2927E-07 & 9.1662E-05 & 1.0939E-06 & 1.0921E-06\\\hline
\multirow{2}{*}{256}  & 2.7853E-04 & 3.4135E-07 & 3.4141E-07 & 3.7286E-05 & 4.9867E-07 & 4.9869E-07\\
                      & 7.9246E-04 & 3.5032E-07 & 3.5252E-07 & 9.1968E-05 & 4.9986E-07 & 5.0030E-07\\\hline
\multirow{2}{*}{512}  & 2.7875E-04 & 3.0366E-07 & 3.0365E-07 & 3.7300E-05 & 1.4905E-07 & 1.4905E-07\\
                      & 7.9280E-04 & 3.0476E-07 & 3.0452E-07 & 9.2045E-05 & 1.4975E-07 & 1.4971E-07\\\hline
\multirow{2}{*}{1024} & 2.7886E-04 & 1.2014E-07 & 1.2014E-07 & 3.7307E-05 & 4.3477E-08 & 4.3476E-08\\
                      & 7.9288E-04 & 1.2029E-07 & 1.2030E-07 & 9.2065E-05 & 4.4327E-08 & 4.4182E-08\\\hline
\multirow{2}{*}{2048} & 2.7892E-04 & 7.9001E-08 & 7.9001E-08 & 3.7312E-05 & 7.2197E-08 & 7.2198E-08\\
                      & 7.9290E-04 & 7.9031E-08 & 7.9026E-08 & 9.2069E-05 & 7.2315E-08 & 7.2295E-08\\\hline
\multirow{2}{*}{4096} & 2.7895E-04 & 5.2692E-09 & 5.2567E-09 & 3.7314E-05 & 1.1877E-08 & 1.1875E-08\\
                      & 7.9291E-04 & 5.8000E-09 & 5.6822E-09 & 9.2071E-05 & 1.1928E-08 & 1.1916E-08\\\hline
\end{tabular}
\end{table}

\end{document}